\def\Re{{\sf Re}\,}
\newtheorem{theo}{Theorem}[section] 
\newtheorem{prop}[theo]{Proposition}
\newtheorem{ex}[theo]{Example}
\newtheorem{lem}[theo]{Lemma} 
\newtheorem{cor}[theo]{Corollary}
\newtheorem{rem}[theo]{Remark}
\theoremstyle{definition}
\newtheorem{defi}[theo]{Definition}
\numberwithin{equation}{section}
\newcommand{\C}{\mathbb C} 
\newcommand{\R}{\mathbb R} 
\newcommand{\N}{\mathbb N} 
\newcommand{\B}{\mathbb B} 
\newcommand{\D}{\mathbb D}
\begin{document} 
\begin{abstract}
We introduce a prime end-type  theory on complete Kobayashi hyperbolic manifolds using horosphere sequences. This allows to introduce a new notion of boundary---new even in the unit disc in the complex space---the horosphere boundary, and a topology on the manifold together with its horosphere boundary, the horosphere topology.  We prove that a bounded strongly pseudoconvex domain endowed with  the horosphere topology is homeomorphic to its Euclidean closure, while for the polydisc such a horosphere topology is not even Hausdorff and is different from the Gromov topology. We use this theory to study boundary behavior of univalent maps from bounded strongly pseudoconvex domains. 
\end{abstract} 

\title{Horosphere topology}

\author[F. Bracci \and H. Gaussier]{Filippo Bracci$^\ast$ and Herv\'e Gaussier}

\address{\begin{tabular}{lll}
 Filippo Bracci & & Herv\'e Gaussier\\
Dipartimento di Matematica & & Univ. Grenoble Alpes, CNRS, IF, F-38000 Grenoble, France\\
Universit\`a di Roma ``Tor Vergata''& & \\
Via della Ricerca Scientifica, 1& &
 \\
00133 Roma& & 
\\
Italy & &
\end{tabular}
}
\email{fbracci@mat.uniroma2.it \ herve.gaussier@univ-grenoble-alpes.fr } 
\subjclass[2010]{32F27, 32F45, 32H40, 32Q55, 32T15, 53C23, 30E25, 30F25}
\keywords{Invariant distances, Horospheres, Prime ends theory, Gromov hyperbolicity, Boundary extension, Pseudoconvex domains}

\thanks{$^\ast$Supported by the ERC grant ``HEVO - Holomorphic Evolution Equations'' n. 277691}
\maketitle 

\tableofcontents

\section{Introduction}

Carath\'eodory prime ends theory is one of the most powerful tools for studying boundary behavior of univalent functions in the unit disc $\D\subset \C$. Given a simply connected domain $\Omega\subset \C$, one can define an abstract boundary $\partial_C \Omega$, the Carath\'eodory boundary, whose points, called prime ends, are given as  the set of equivalent classes of null chains (see, {\sl e.g.}, \cite{CL, Ep, Po} or Section \ref{Cara}). Then one can give a natural topology, the Carath\'eodory topology, to the space $\hat{\Omega}_C:=\Omega\cup \partial_C\Omega$. It turns out that $\hat{\D}_C$ is homeomorphic to the Euclidean closure $\overline{\D}$ and, if $f:\D \to \Omega$ is a biholomorphism, then $f$ extends to a homeomorphism from $\hat{\D}_C$ to $\hat{\Omega}_C$. The link between the Carath\'eodory boundary and the boundary behavior of univalent functions is provided by impressions and principal parts of prime ends. In particular, if the impression of each prime end of $\Omega$ is just one point,  the 
map $f$ extends continuously. 

Carath\'eodory prime ends theory is defined by using Euclidean objects (the null chains, which are sequences of Jordan arcs ending at the frontier of $\Omega$), and the ultimate reason why it works is because univalent mappings in $\D$ are (quasi-)conformal. Indeed, Carath\'eodory's theory can be generalized to domains in $\R^n$ for quasi-conformal mappings (see \cite{Va} and \cite{HK}). Other generalizations of the prime ends theory in metric spaces have been studied in \cite{ABBS}. The problem with those generalizations to higher dimension, is that they cannot be applied to univalent maps without adding some extra hypotheses (in general, a univalent map in higher dimension is not quasi-conformal).  

Different types of boundaries were introduced in other contexts. For instance, the Gromov boundary was introduced by M. Gromov \cite{Gr} in hyperbolic groups with the purpose of a better understanding of the growth of some groups at infinity. The construction is valid in hyperbolic metric spaces, namely metric spaces in which geodesic triangles are thin. In most situations, the space is assumed geodesic, meaning that any two points may be joined by a geodesic, and proper, meaning that closed balls are compact.
We recall that a geodesic ray, in a geodesic metric space $(X,d)$, is an isometry from $[0,+\infty[$ to $X$ such that the length of the segment $\gamma([0,t])$ is equal to $d(\gamma(0),\gamma(t))$ for every $t \geq 0$.
 Two geodesic rays $\gamma_1$ and $\gamma_2$ are equivalent if there is some $c>0$ such that $d(\gamma_1(t), \gamma_2(t)) \leq c$ for every $t \geq 0$.
 
 The Gromov boundary of $X$ is the quotient of the set of geodesic rays whose origin is some fixed base point $x \in X$ by that equivalence relation. Moreover, it does not depend on the chosen base point. See Section 7 for the precise definitions.
 

In this paper we introduce a completely new prime ends theory defined via horospheres related to sequences. Horospheres have been used pretty much in geometric function theory in one and several variables, especially for studying iteration theory, Julia's Lemma, Denjoy-Wolff theorems (see, {\sl e.g.}, \cite{ABZ, Aba, AR, Bud, BKR, RS, Fr} and references therein), and they are a particular instance of a general notion of horospheres  in locally complete metric spaces, see \cite{EO, BGS}.  In complex geometry, horospheres defined by using complex geodesics are sometimes called Busemann horospheres. In strongly convex domains with smooth boundary, thanks to Lempert's theory \cite{L}, horospheres turn out to be level sets of a pluricomplex Poisson kernel \cite{BP, BPT} and in the unit ball $\B^n$ they are just ellipsoids internally tangent to the boundary of the ball at one point. 

Our point of view is different than in the previous works. 
To be precise, let $M$ be a Kobayashi complete hyperbolic manifold, let $K_M$ denote its Kobayashi distance, and let $x\in M$. Given a  compactly divergent sequence  $\{u_n\}$ in $M$, and $R>0$, we define the {\sl horosphere} relative to $\{u_n\}$ with radius $R$ by
\[
E_x(\{u_n\}, R):=\{w\in M: \limsup_{n\to \infty}[K_M(w,u_n)-K_M(x,u_n)]<\frac{1}{2}\log R\}.
\]
The sequence is {\sl admissible} if $E_x(\{u_n\}, R)\neq \emptyset$ for all $R>0$. We introduce an equivalence relation on the set of all admissible sequences by declaring equivalent  two admissible sequences if  every horosphere relative to one sequence is contained in a horosphere of the other and vice versa (see Section \ref{Admiss} for precise statements and definitions). The {\sl horosphere boundary }ÃÂ $\partial_H M$ is the set of equivalence classes of admissible sequences. Let $\hat{M}:=M\cup \partial_H M$. Then, using horospheres, we define a topology on $\hat{M}$ which induces on $M$  its natural topology. We call {\sl horosphere topology} such a topology (see Section \ref{topology}). Since this topology is defined via the Kobayashi distance $K_M$, it turns out that if $F:M\to N$ is a biholomorphism then $F$ extends naturally to a homeomorphism $\hat{F}:\hat{M}\to \hat{N}$. 

The first main result of the paper, is the following generalization of Carath\'eodory's theorem (see Theorem \ref{main-pseudo}):

\begin{theo}\label{main-pseudo-intro}
Let $D\subset \C^N$ be a bounded strongly pseudoconvex domain with $C^3$ boundary. Then $\hat{D}$ endowed with the horosphere topology  is homeomorphic to $\overline{D}$ (closure in $\C^N$) endowed with the Euclidean topology.
\end{theo}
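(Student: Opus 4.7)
\medskip

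\noindent\textbf{Proof plan.} The strategy is to construct an explicit map $\Phi\colon\hat D\to\overline D$ that is the identity on $D$ and sends a horosphere boundary point $[\{u_n\}]\in\partial_H D$ to the Euclidean limit of the sequence $\{u_n\}\subset D$, and then to verify that $\Phi$ is a well-defined bijection and a homeomorphism. The main inputs will be the sharp Forstneric--Rosay/Graham asymptotic estimates for the Kobayashi distance $K_D$ near $\partial D$ and Abate's description of horospheres in strongly pseudoconvex domains as objects that are internally tangent to $\partial D$ at a unique boundary point and shrink to that point as the radius $R\to 0^+$.

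\medskip

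\noindent\emph{Well-definedness and bijectivity.} First I would show that every admissible sequence $\{u_n\}$ has a unique Euclidean limit $p\in\partial D$: boundedness of $D$ and compact divergence give $\partial D$-limit points, and the estimate $K_D(z,w)\approx \tfrac12\log\bigl(1/(\delta_D(z)\delta_D(w))\bigr)+$ (boundary terms) implies that if $\{u_n\}$ had two distinct Euclidean cluster points $p_1\neq p_2$, then along appropriate subsequences the quantity $K_D(w,u_n)-K_D(x,u_n)$ would be driven to $+\infty$ for \emph{every} fixed $w$, contradicting $E_x(\{u_n\},R)\neq\emptyset$ for small $R$. This gives a map $\Phi\colon\partial_H D\to\partial D$. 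Surjectivity is immediate: for $p\in\partial D$, any sequence $u_n=p-\varepsilon_n\nu_p$ along the inner normal is admissible (horospheres at normal sequences are the standard Abate horospheres at $p$). Injectivity is the heart of the matter and the step I expect to be hardest: one must show that any two admissible sequences $\{u_n\},\{v_n\}$ with the same Euclidean limit $p$ are equivalent, i.e.\ each horosphere of one is contained in some horosphere of the other. This is where the geometry of strongly pseudoconvex domains is crucial: by the boundary Kobayashi estimates, the quantity $K_D(w,u_n)-K_D(x,u_n)$ admits, for $u_n\to p$, an asymptotic expansion depending only on $p$ (through the Levi form at $p$ and the normal direction to $\partial D$ at $p$) up to an additive constant absorbed by a shift of $R$. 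Hence $E_x(\{u_n\},R)$ and $E_x(\{v_n\},R')$ are nested for suitable $R,R'$, which gives the equivalence.

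\medskip

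\noindent\emph{Continuity in both directions.} Once $\Phi$ is a bijection, I would establish continuity by comparing neighborhood bases. A basic horosphere-neighborhood of $[\{u_n\}]$ in $\hat D$ consists of the horosphere $E_x(\{u_n\},R)$ together with all boundary classes $[\{v_n\}]$ whose horospheres eventually sit inside $E_x(\{u_n\},R)$. To see that $\Phi$ is continuous, I use the property that in a strongly pseudoconvex domain the horosphere $E_x(\{u_n\},R)$ is contained in an arbitrarily small Euclidean ball around $p=\Phi([\{u_n\}])$ once $R$ is small enough (internal tangency + shrinkage). Conversely, to see that $\Phi^{-1}$ is continuous, given a Euclidean neighborhood $U$ of $p\in\partial D$, one picks $R$ small so that $E_x(\{u_n\},R)\subset U$ and checks that the corresponding horosphere-neighborhood of $[\{u_n\}]$ maps inside $U\cap\overline D$; the boundary classes in this neighborhood correspond, via $\Phi$, to boundary points $q$ whose horospheres eventually lie in $E_x(\{u_n\},R)$ and hence $q\in \overline{E_x(\{u_n\},R)}\cap\partial D \subset \overline U$.

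\medskip

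\noindent\emph{Main obstacle.} The principal difficulty is the injectivity step, which requires quantitatively controlling the additive ambiguity in $K_D(w,u_n)-K_D(x,u_n)$ as $u_n\to p$ uniformly in $w$ on compacta of $D$. This is precisely where the $C^3$ strong pseudoconvexity hypothesis is indispensable: it is the hypothesis under which the Kobayashi distance admits the sharp asymptotics needed to ensure that two sequences with the same Euclidean limit produce nested families of horospheres. The contrast with the polydisc (where this fails and the horosphere topology is not even Hausdorff) is exactly the failure of these uniform estimates in the presence of flat boundary directions.
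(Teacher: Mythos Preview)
Your overall strategy---define $\Phi$ to be the identity on $D$ and send $[\{u_n\}]$ to the Euclidean limit of $\{u_n\}$, then verify bijectivity and continuity---is exactly the paper's approach. However, there are genuine gaps in three of the steps.

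\textbf{Uniqueness of the Euclidean limit.} Your argument that two cluster points would force $K_D(w,u_n)-K_D(x,u_n)\to+\infty$ for every fixed $w$ is wrong: the triangle inequality gives $|K_D(w,u_n)-K_D(x,u_n)|\leq K_D(w,x)$, so this quantity is uniformly bounded for each $w$. The correct mechanism is different: one shows $E_x(\{u_n\},R)\subset F_x(p,R)$ for every cluster point $p$ (where $F_x$ is Abate's big horosphere), and then invokes the fact that $F_x(p,R)\cap F_x(q,R)=\emptyset$ for small $R$ when $p\neq q$, which kills admissibility.

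\textbf{Injectivity.} You correctly flag this as the hard step, but your sketch (``asymptotic expansion depending only on $p$ up to an additive constant'') amounts to asserting that small and big horospheres coincide at $p$ up to radius shift. For strongly \emph{convex} domains this is Lempert theory; for strongly pseudoconvex domains it is not known directly, and the Forstneri\v{c}--Rosay/Graham estimates by themselves are not sharp enough to yield it. The paper's proof routes around this via a localization argument: one proves a sharp additive localization estimate $K_{U\cap D}-K_D\leq \tfrac12\log T$ on a small neighborhood $W\subset U$ of $p$ (using the Balogh--Bonk estimates and equality of the local Carnot--Carath\'eodory metrics), chooses $U$ so that $U\cap D$ is biholomorphic to a strongly convex domain with $C^3$ boundary (this is where $C^3$ enters), and then transfers the equality $E_x=F_x$ from $U\cap D$ back to $D$ up to controlled radius shifts. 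Without this localization step your injectivity argument does not close.

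\textbf{The topology.} Your description of horosphere-neighborhoods (``boundary classes whose horospheres eventually sit inside $E_x(\{u_n\},R)$'') is a \emph{containment} condition, but the paper's horosphere topology is defined via convergence through an \emph{intersection} condition: $\{\underline{y}_m\}\to\underline{y}$ means representative horospheres satisfy $E_x(\{u_n^m\},R)\cap E_x(\{u_n\},R)\neq\emptyset$ for all small $R$. In strongly pseudoconvex domains these turn out to detect the same limits a posteriori, but proving that requires exactly the uniform cone-region inclusion $\mathcal{C}(p,\alpha,\varepsilon)\subset E_x(\{u_n\},R)$ (with $\varepsilon$ depending only on $\alpha,R$, not on $p$ or $\{u_n\}$) together with the squeeze lemma $\bigcup_{p\in V}F_x(p,R)\subset U$ for closed $V\subset\partial D$. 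Your continuity argument needs to be rewritten against the actual definition.
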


The somewhat unnatural technical assumption on $C^3$ regularity of the boundary of $D$ is needed in order to apply some theory of complex geodesics in strongly convex domains.


As a matter of fact, the horosphere boundary of the polydisc is not Hausdorff; hence, we have another proof of the well known fact that strongly pseudoconvex domains cannot be biholomorphic to polydiscs. More generally, using this result, we can prove that there is no holomorphic isometric embeddings of a polydisc into any strongly pseudoconvex domain.

As in Carath\'eodory's prime ends theory, for domains in $\C^N$ (or more generally in $\mathbb C\mathbb P^N$), we can define {\sl horosphere impressions} and {\sl horosphere principal parts}. In particular, for  a bounded strongly pseudoconvex domain $D\subset \C^N$  with $C^3$ boundary, the horosphere impressions always reduce to one point at the boundary. Therefore, if $F:D\to \Omega$ is a biholomorphism, the limit of $F$ at a point $p\in \partial D$ is given by the impression of $\hat{F}(\underline{x}_p)$, where $\underline{x}_p$ is the point of $\partial_H D$  corresponding to $p$ under the isomorphism of Theorem \ref{main-pseudo-intro}. In particular, $F$ extends continuously on $\overline{D}$ if and only if the impressions of each point of $\partial_H \Omega$ is just one point in $\partial \Omega$---and this also gives another proof of homeomorphic extension of biholomorphisms among strongly pseudoconvex domains. On the other hand, non-tangential limits (in fact, a larger notion of limits which we call 
E-limits) can be controlled using the principal part of horospheres, similarly to what happens in Carath\'eodory's  theory for the principal parts of prime ends (see Section \ref{App}). 

We apply the horosphere theory to study biholomorphisms $F:D\to \Omega$ from a strongly pseudoconvex domain $D$ to a convex domain $\Omega$. If the domain $\Omega$ is strongly convex with smooth boundary, then Fefferman's theorem \cite{Fef} implies that the map $F$ extends as a diffeomorphism from $\overline{D}$ to $\overline{\Omega}$.  In case $\Omega$ has no boundary regularity, some other conditions on the behavior of the Kobayashi distance can be useful to obtain continuous extension (see \cite{Zi2} where A. Zimmer proves the continuous extension of Kobayashi isometric embeddings of a bounded convex domain with $\mathcal C^{1,\alpha}$ boundary into a strictly $\mathbb C$-convex domain with $\mathcal C^{1,\alpha}$ boundary, \cite{BZ} and references therein). However, if nothing is assumed on $\Omega$, very little is known, even when $D=\B^n$,  the unit ball. In fact, a conjecture of Muir and Suffridge  \cite{MS, MS2} states that if $F:\B^n\to \C^N$ is univalent and its image is  convex, then  $F$  extends continuously to $\partial\B^n$ except at at most two infinite singularities.  

Using in an essential way the theory developed in Section \ref{convexsec} and the theory of Gromov hyperbolicity, not only we give an affirmative answer to the Muir and Suffridge conjecture in case of {\sl bounded} convex domains (with no boundary regularity assumed), but also we prove homeomorphic extension without any additional assumption. The  Muir and Suffridge conjecture has been then completely settled by the  two authors in \cite{BG}, using material from this paper. The result we prove here is the following  (see Corollary \ref{continuo-ball}):

\begin{theo}
Let $D\subset \C^N$ be a bounded strongly convex domain with $C^3$ boundary, for instance, $D=\B^n$. Let $F:D \to \Omega$ be a biholomorphism. If $\Omega$ is a bounded convex domain,  then $F$ extends as a homeomorphism  from $\overline{D}$ to $\overline{\Omega}$. 
\end{theo}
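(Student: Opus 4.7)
My plan is to assemble the statement from three ingredients developed earlier in the paper: Theorem \ref{main-pseudo-intro} applied to $D$, the biholomorphic invariance of the horosphere compactification, and the convex-domain theory of Section \ref{convexsec} applied to $\Omega$.

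First, since $D$ is strongly convex with $C^3$ boundary, hence strongly pseudoconvex with $C^3$ boundary, Theorem \ref{main-pseudo-intro} provides a homeomorphism $\Phi:\hat D\to \overline D$ extending the identity on $D$. Second, because $F$ is a biholomorphism it is an isometry for the Kobayashi distances, so it maps admissible sequences to admissible sequences and horospheres to horospheres; since $\Omega$ is bounded and convex (and therefore Kobayashi complete hyperbolic), this upgrades to a homeomorphism $\hat F:\hat D\to\hat \Omega$ of horosphere compactifications.

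The crux is to produce a homeomorphism $\Psi:\hat\Omega\to\overline\Omega$ extending the identity on $\Omega$; once this is done, the composition $\Psi\circ\hat F\circ\Phi^{-1}:\overline D\to\overline\Omega$ is the desired extension of $F$, and it is a homeomorphism as a composition of homeomorphisms. The map $\Psi$ should send each class $\underline y\in\partial_H\Omega$ to its horosphere impression in $\partial\Omega$, and for this to be well defined and a homeomorphism one needs (i) each impression to be a single point, (ii) distinct classes to have distinct impressions, and (iii) continuity. I would deduce all three from convexity of $\Omega$ together with the results of Section \ref{convexsec}: horospheres in a bounded convex domain are convex subsets of $\Omega$, and the aim is to show that an admissible sequence in $\Omega$ has a unique Euclidean cluster point on $\partial\Omega$ which both determines and is determined by its horosphere class. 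The Gromov hyperbolicity of $\Omega$, transferred from $D$ through the Kobayashi isometry $F$, then supplies the visibility needed to separate distinct boundary points in the horosphere topology. Once $\Psi$ is a continuous bijection from the compact space $\hat\Omega$ to the Hausdorff space $\overline\Omega$, it is automatically a homeomorphism.

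The main obstacle I anticipate lies in steps (ii)--(iii) of the analysis of $\Psi$: in the absence of any boundary regularity on $\Omega$, its horospheres are pulled back from the well-shaped horospheres of $D$ through a potentially wild homeomorphism, so all the control must come from plain convexity of $\Omega$ plus the quasi-isometric data inherited from $D$. Concretely, one must argue that two admissible sequences in $\Omega$ with distinct Euclidean limits in $\partial\Omega$ admit horospheres that eventually fail to be nested, which is precisely the point where convex geometry and Gromov-hyperbolic visibility have to be combined carefully.
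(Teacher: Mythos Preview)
Your three-ingredient framework matches the paper's exactly: Theorem \ref{main-pseudo-intro} for $D$, the isometric extension $\hat F:\hat D\to\hat\Omega$, and the convex-domain results of Section \ref{convexsec} (specifically Theorem \ref{ball-biholo-convex}) for $\Omega$. Your repackaging via a putative homeomorphism $\Psi:\hat\Omega\to\overline\Omega$ is equivalent to the paper's direct extension of $F$; the well-definedness of $\Psi$ is precisely Theorem \ref{ball-biholo-convex}, and continuity is Corollary \ref{converge-bene}, both of which you correctly invoke.

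The one genuine gap is step (ii), injectivity, and here your suggested mechanism differs from the paper's. You gesture at Gromov-hyperbolic visibility to separate boundary points, but the paper's argument is an elementary convexity trick that avoids this entirely. Suppose $F(p_0)=F(p_1)=q$ with $p_0\neq p_1$. In $D$, pick $R>0$ large enough that the horospheres $E^D_x(\{u_n\},R)$ and $E^D_x(\{v_n\},R)$ (for sequences $u_n\to p_0$, $v_n\to p_1$) intersect in a nonempty open set $V$; since their closures meet $\partial D$ only at $p_0$ and $p_1$ respectively, $V$ is relatively compact in $D$. Push forward by $F$: the image $F(V)$ is the intersection of two \emph{convex} horospheres in $\Omega$ (Proposition \ref{horo-convex}), each of which has $q$ in its closure. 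Take any $z_0\in F(V)$; the real segment from $z_0$ to $q$ lies in both convex horospheres, hence in $F(V)$, so $F(V)$ is not relatively compact in $\Omega$---a contradiction. This is short and uses nothing beyond convexity of horospheres; your visibility idea could presumably be made to work but would be heavier.

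A minor caution: your sentence ``an admissible sequence in $\Omega$ has a unique Euclidean cluster point on $\partial\Omega$ which both determines and is determined by its horosphere class'' is true here, but only \emph{after} the machinery is in place (pull back to $D$, use Proposition \ref{convergenceadmsp}, then push forward via Theorem \ref{ball-biholo-convex}); it is not an independent starting point for building $\Psi$.
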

The same result holds for univalent mappings from bounded strongly pseudoconvex domains whose ima\-ge is bounded and {\sl strictly $\C$-linearly convex}, see Section \ref{App}. 

Our approach relies in an essential way on the horosphere topology and on the hyperbolicity theory; it gives an example of a deep interaction between metric properties and topological properties of a complex manifold.


As a spin off result of our work, we prove a Denjoy-Wolff Theorem for bounded convex domains $D\subset \C^N$ biholomorphic   to strongly convex domains and for strictly $\C$-linearly convex domains biholomorphic to bounded strongly pseudoconvex domains. Namely, we prove that if $f$ is a holomorphic self-map of $D$ without fixed points then the sequence of its iterates converges to exactly one boundary  point (see Proposition \ref{Denjoy}). 

We also compare the horosphere boundary we introduced with the Gromov boundary. For strongly pseudoconvex domains, by \cite[Theorem 1.4]{BB}, the Gromov boundary is homeomorphic to the Euclidean boundary, thus, by Theorem \ref{main-pseudo-intro}, homeomorphic to the horosphere boundary. Therefore, we examine in details the interesting case of the bidisc. It is standard that the bidisc is not Gromov hyperbolic. We prove that the topology of the Gromov boundary of the bidisc is not trivial while the horosphere topology of the bidisc is trivial, thus the two boundaries are not homeomorphic. 

The plan of the paper is the following. In Section \ref{Cara}, we consider the case of simply connected domains in $\C$, both to explain  the ideas underlying the horosphere theory in some simple case like the unit disc and to compare horosphere topology with  the Carath\'eodory topology. In Section \ref{Admiss} we introduce the notion of admissible sequences and define the horosphere boundary. In Section \ref{topology}, we introduce the horosphere topology for a general complete hyperbolic complex manifold, and the notions of impressions and principal parts for domains in $\C^N$. In Section \ref{strongly}, we turn our attention to the case of strongly pseudoconvex domains, we relate horosphere sequences with Abate's big and small horospheres, and we prove Theorem \ref{main-pseudo-intro}. Next, in Section \ref{convexsec} we concentrate on convex domains. After proving some preliminary results on horospheres for  hyperbolic convex domains, we consider  
convex domains biholomorphic to strongly pseudoconvex domains and then  bounded convex domains biholomorphic to strongly convex domains.  In Section \ref{Gromov}, we face the natural question of comparing horosphere boundaries and Gromov's different notions of boundaries.  Finally, in Section \ref{App}, we prove our extension results using the theory we developed in Section \ref{convexsec} and Gromov's theory of hyperbolic metric spaces.

\vskip 0,2cm
{\sl Acknowledgements. The authors wish to thank Andrew Zimmer for fruitful conversations. The authors also thank the referee for several useful comments which improved the original manuscript.}

\section{Carath\'eodory prime ends theory vs. Horosphere topology in the unit disc}\label{Cara}

In this section we introduce the horosphere topology for simply connected domains in $\C$ and compare this theory with the classical theory of Carath\'eodory. The section does not contain any material which will be used later on, does not contain any proof of stated facts, and can be harmlessly skipped. However,  the horosphere theory we introduce is new also in dimension one. Moreover,  looking first at an easy case like the unit disc might simplify comprehension of the several complex variables case. Therefore, we decided to add this section.

We start by briefly recalling Carath\'eodory's prime ends theory (we refer to \cite{Ep, CL, Po} for details). Let $D\subset \C$ be a simply connected domain. A {\sl cross-cut} is a Jordan arc or a Jordan curve in $\overline{D}$ such that its interior belongs to $D$ and whose two end points belong to $\partial D$ (if the domain $D$ is unbounded, we consider its closure in the Riemann sphere $\mathbb C\mathbb P^1$). Every cross-cut $C$ of $D$ divides $D$ into two connected components by Jordan's theorem, and we say that $C$ separates two sets $A, B\subset D$ if $A$ belongs to one connected component of $D\setminus C$ and $B$ to the other. A {\sl null chain} $(C_n)$ is a sequence of cross-cuts such that $C_j\cap C_k=\emptyset$ for $j\neq k$,  each $C_n$ separates $C_{n+1}\cap D$ from $C_0\cap D$ in $D$ for all $n\geq 1$ and $\hbox{diam}(C_n)\to 0$, where $\hbox{diam}(C_n)$ denotes the diameter in the Euclidean metric or in the spherical metric if the domain is unbounded.  For $n\geq 1$ we denote by $V_n$ the {\sl interior part} of $C_n$, that is, the connected component of $D\setminus C_n$ which does not contain $C_0\cap D$. 

We say that two null chains $(C_n)$ and $(C_n')$ are equivalent  if there exists $n_0\in \N$ such that for every $n>n_0$, $n\in \N$ there exists $m\in \N$ such that $V_m\subset V'_{n}$ and $V'_{m}\subset  V_{n}$ (here $V_n$ is the interior part of $C_n$ and $V_n'$ is the interior part of $C_n'$). An equivalence class of null chains is called a {\sl prime end}. The set of all prime ends is denoted by $\partial_C D$ and it is called the {\sl Carath\'eodory boundary} of $D$. Let $\hat{D}_C:=D\cup \partial_C D$.

We give a topology on $\hat{D}_C$ as follows. A sequence $\{z_n\}\subset D$ converges to a prime end $\underline{x}^C\in \partial_C D$ if there exists a null chain $(C_n)$ representing $\underline{x}^C$ such that for every $N\in \N$ the sequence $\{z_n\}$ is eventually contained in $V_N$. A sequence $\{\underline{x}^C_m\}\subset \partial_C D$ converges to $\underline{x}^C\in \partial_C D$ if there exist null chains $(C_n^m)$ representing $\underline{x}^C_m$ and a null chain $(C_n)$ representing $\underline{x}^C$ such that for every $N\in \N$ there exists $m_0\in \N$ such that for each $m\geq m_0$ the sequence $(C_n^m)$ is eventually contained in $V_N$. On $D$ we keep the Euclidean topology. The topology generated by the previous definitions is called the {\sl Carath\'eodory topology}. 
Two main results of Carath\'eodory theory are the following:
\begin{itemize}
\item $\hat{\D}_C$ endowed with the Carath\'eodory topology is homeomorphic to the closed unit disc $\overline{\D}$ endowed with the Euclidean topology.
\item If $D_1, D_2$ are two simply connected domains, $f:D_1 \to D_2$  a biholomorphism, then $f$ extends to a homeomorphism $\hat{f}_C: \widehat{D_1}_C\to \widehat{D_2}_C$. 
\end{itemize}
Given a prime end $\underline{x}^C\in \partial_C D$, the {\sl prime end impression} is defined by 
\[
\hbox{I}^C_D(\underline{x}^C):=\bigcap_{n\geq 0}ÃÂ \overline{V_n},
\]
where $\{V_n\}$ is the interior part of any null chain $(C_n)$ representing $\underline{x}$ and the closure has to be understood in $\mathbb C\mathbb P^1$ in case of unbounded domains. Note that, $p\in \hbox{I}^C_D(\underline{x}^C)$ if and only if there exists a sequence $\{z_n\}\subset D$ such that $z_n\to p$ in the Euclidean topology and $z_n\to \underline{x}^C$ in the Carath\'eodory topology.

Let $f:\D \to D\subset \C$ be a biholomorphism, $\zeta\in \partial \D$. We denote by $\Gamma(f;\zeta)$ the cluster set of $f$ at $\zeta$, namely, $p\in \Gamma(f;\zeta)$ if there exists a sequence $\{z_n\}\subset \D$ converging to $\zeta$ such that $f(z_n)\to p$. Now, the point $\zeta$  corresponds to a point $\underline{x}^C_\zeta\in \partial_C \D$. A null chain representing $\underline{x}^C_\zeta$ is given by $(C_n)$ with $C_n:=\{z\in \overline{\D}: |z-\zeta|=r_n\}$, where $\{r_n\}$ is any strictly decreasing sequence of positive numbers converging to $0$. Therefore, if $\{z_n\}\subset \D$ converges to $\zeta$ in the Euclidean topology, then it also converges to  $\underline{x}^C_\zeta$ in the Carath\'eodory topology. Now,  one can choose the sequence $\{r_n\}$ in such a way that $(f(C_n))$ is  a null chain  in $D$ which represents $\hat{f}_C(\underline{x}^C_\zeta)$. The sequence $\{f(z_n)\}$ belongs eventually to the interior part of each $\hat{C}_n$, hence, it accumulates to points in the prime ends 
impression of $\hat{f}_
C(\underline{x}^C_\zeta)$. Namely,
\[
\Gamma(f;\zeta)=\hbox{I}^C_D(\hat{f}_C(\underline{x}^C_\zeta)).
\]
In particular, a biholomorphism $f:\D \to D$ extends continuously to $\partial \D$ if and only if the  impression of each prime end is one point. 

The {\sl principal part }ÃÂ $\hbox{II}^C_D(\underline{x}^C)$ of a prime end $\underline{x}^C\in \partial_C D$, is defined as follows. A point $p\in \partial D$ belongs to $\hbox{II}^C_D(\underline{x}^C)$ if for every open neighborhood $U$ of $p$ there exists a null chain $(C_n)$ representing $\underline{x}^C$ such that $C_n\subset U$ for all $n\in \N$. Given a biholomorphism $f:\D \to D$ and $\zeta\in \partial \D$, let denote by $\Gamma_{NT}(f;\zeta)$ the cluster set of $f$ along non-tangential sequences. Namely, $p\in \partial \D$ belongs to $\Gamma_{NT}(f;\zeta)$ if there exists a sequence $\{z_n\}\subset \D$, converging non-tangentially at $\zeta$ such that $f(z_n)\to p$. Then it holds
\[
\Gamma_{NT}(f;\zeta)=\hbox{II}^C_D(\hat{f}_C(\underline{x}^C_\zeta)).
\]

Null chains are Euclidean objects, and the fact that a biholomorphism maps null chains almost into  null chains, allowing to extend the map as  a homeomorphism on the Carath\'eodory boundary, relies strongly on quasi-conformality of biholomorphisms. On the other hand, once this is done, and the homeomorphism between $\hat{\D}_C$ and $\overline{\D}$ is proved, the relation between impressions and unrestricted limits  comes almost for free. 

Here we take a dual point of view. We define a boundary and a topology via the intrinsic hyperbolic distance, in such a way that the homeomorphic extension of biholomorphisms to the newly defined boundary comes for free, but on the other hand, the price we have to pay is that horospheres impressions are less immediate to understand than prime ends impressions in Carath\'eodory theory. 

In order to give some lights on the construction we present in the next sections, we describe here the horosphere topology for the unit disc and simply connected domains in $\C$. A {\sl horosphere} of vertex $\zeta\in \partial \D$ and radius $R>0$ in $\D$ is given by 
\[
E(\zeta, R):=\{z\in \D: \frac{|\zeta-z|^2}{1-|z|^2}<R\}.
\]
It is a disc of radius $R/(R+1)$ contained in $\D$ and tangent to $\partial \D$ at $\zeta$. One can easily show that
\begin{equation}\label{exp-pio}
E(\zeta,R)=\{z\in \D: \lim_{w\to \zeta}[K_\D(z,w)-K_D(0,w)]<\frac{1}{2}\log R\},
\end{equation}
where $K_\D$ denotes the Poincar\'e distance in $\D$. Equation \eqref{exp-pio} is however not yet suitable for being considered in other simply connected domains, essentially because it is related to the point $\zeta$ in the Euclidean boundary of $\D$, which, from an intrinsic point of view of Poincar\'e distance, does not exist. Therefore, instead of considering limits to a given boundary point, we consider sequences $\{z_n\}\subset \D$ such that $\liminf_{n\to \infty}K_\D(z_n,0)=\infty$ (namely, we consider sequences which, from an Euclidean point of view, go to the boundary). For $R>0$ we define 
\begin{equation}\label{Edisc}
E^\D(\{u_n\}, R):=\{z\in \D: \limsup_{n\to \infty}[K_\D(z,u_n)-K_\D(0,u_n)]<\frac{1}{2}\log R\}.
\end{equation}
By \eqref{exp-pio}, if the cluster set of $\{u_n\}$ is more than one point, there exists $R>0$ such that $E(\{u_n\}, R)=\emptyset$, while, if $\{u_n\}$ converges to $\zeta\in \partial \D$, then 
\begin{equation}\label{E-E}
E^\D(\{u_n\}, R)=E(\zeta, R)
\end{equation}
 for all $R>0$. Therefore, morally, we replace $\zeta\in \partial \D$ with sequences $\{u_n\}$ such that  $E^\D(\{u_n\}, R)=E(\zeta, R)$ for all $R>0$. To be more formal, we say that a compactly divergent sequence $\{u_n\}\subset \D$ is {\sl admissible} provided $E^\D(\{u_n\}, R)\neq\emptyset$ for all $R>0$. 

If $D\subset\C$ is a simply connected domain, we can define admissible sequences in $D$ using the same token: a sequence $\{u_n\}\subset D$ is admissible if $\liminf_{n\to \infty}K_D(x,u_n)=\infty$ and 
\[
E_x^D(\{u_n\}, R):=\{z\in \D: \limsup_{n\to \infty}[K_D(z,w)-K_D(0,w)]<\frac{1}{2}\log R\}
\]
is not empty for all $R>0$. Here $K_D$ is the Poincar\'e distance on $D$ and $x\in D$ is a fixed point, whose choice does not play any substantial role. 

Then we define an equivalence relation on admissible sequences by declaring $\{u_n\}$  equivalent to $\{v_n\}$ if for every $R>0$ there exist $R', R''>0$ such that 
\[
E_x^D(\{u_n\}, R')\subset E_x^D(\{v_n\}, R)\subset E_x^D(\{u_n\}, R'').
\]
In case of the unit disc, two admissible sequences $\{u_n\}$ and $\{v_n\}$ are then equivalent if and only if they converge to the same boundary point. 

We denote by $\partial_H D$ the set of all equivalence classes of admissible sequences in $D$  and we call it the {\sl horosphere boundary} of $D$. 
Let $\hat{D}:=D\cup \partial_H D$. We want to give a topology on $\hat{D}$ in such a way that on $D$ it coincides with the Euclidean topology and $\hat{\D}$ is homeomorphic to $\overline{\D}$. 

We start with the last requirement. As we said, if $\underline{x}\in \partial_H \D$, every admissible sequence $\{u_n\}$ representing $\underline{x}$ converges to the same point $\zeta_{\underline{x}}\in \partial \D$. Thus, it is natural to associate  $\underline{x}$ to the point $\zeta_{\underline{x}}$. Moreover, by \eqref{E-E}, we can also think of $\underline{x}$ as the family of horospheres $E(\zeta_{\underline{x}}, R)$, $R>0$.  Now, it is easy to see that  a sequence of points $\{\zeta_j\}\in \partial \D$ converges to $\zeta\in \partial \D$ if and only if for every $R>0$ there exists $n_R\in \N$ such that for all $n\geq n_R$ it holds $E(\zeta_j, R)\cap E(\zeta, R)\neq \emptyset$. This is exactly the definition we can exploit in the general case: let $D\subset \C$ be a simply connected domain. A sequence $\{\underline{x}_j\}\subset \partial_H D$ converges to $\underline{x}\in \partial_H D$ if there exist
admissible sequences $\{u_n^j\}$ representing $\underline{x}_j$ for every $j$, and an admissible sequence $\{u_n\}$ representing $\underline{x}$ such that for every $R>0$ there exists $m_R\in \N$ such that $E^D_x(\{u_n^j\}, R)\cap E^D_x(\{u_n\}, R)\neq \emptyset$ for all $j\geq m_R$. 

Now, we consider convergence from inside $\D$ to the horosphere boundary. It is easy to see that a sequence $\{z_n\}\subset \D$ converges to $\zeta\in \partial \D$ if there exists a sequence of points $\{\zeta_n\}\subset \partial \D$  (not necessarily all different from each other) such that for every $R>0$ there exists $m_R>0$ such that $E(\zeta, R)\cap E(\zeta_j, R)\neq \emptyset$ and $z_j\in E(\zeta_j, R)$  for all $j\geq m_R$. Thus, we can transform this observation into the general definition: a sequence $\{z_j\}\subset D$ converges to $\underline{x}\in \partial_H D$ if there exist
admissible sequences $\{u_n^j\}\subset D$  and an admissible sequence $\{u_n\}\subset D$ representing $\underline{x}$ such that for every $R>0$ there exists $m_R\in \N$ such that $E^D_x(\{u_n^j\}, R)\cap E^D_x(\{u_n\}, R)\neq \emptyset$ and $z_j\in E^D_x(\{u_n^j\}, R)$ for all $j\geq m_R$. 

With the previous definitions, we gave a meaning to the notion of  convergence in $\hat{D}$. In this way we can define the {\sl horosphere topology} on $\hat{D}$ by declaring a subset $C\subset \hat{D}$ to be closed if, whenever $\{z_n\}\subset C$ converges in the sense described above to $z\in \hat{D}$, then $z\in C$. 

By the above discussion, it is clear that $\hat{\D}$ is homeomorphic to $\overline{\D}$. Moreover,  let $f:\D \to D\subset \C$ be a biholomorphism. Since the map $f$ is an isometry between $K_\D$ and $K_D$ and all the constructions are made in terms of hyperbolic distance, it follows at once that $f$  extends to a homeomorphism $\hat{f}: \hat{\D} \to \hat{D}$. 

Being completely intrinsic, the definition of horosphere boundary and horosphere topology is suitable to be generalized  to any complete hyperbolic complex manifold. However, for the same reason, it is harder to relate it to boundary limits, but it can be done. 

Let $D\subset \C$ be a simply connected domain. Given $\underline{x}\in \partial_H D$, we say that $p\in \hbox{I}^H_D(\underline{x})$ if there exists a sequence $\{z_n\}\subset D$ such that $z_n\to p$ in the Euclidean topology and $z_n\to \underline{x}$ in the horosphere topology. We call $\hbox{I}^H_D(\underline{x})$ the {\sl horosphere impression} of $\underline{x}$. 

If $f:\D \to D$ is a biholomorphism, and $\zeta\in \partial \D$, let $\underline{x}_\zeta\in \partial_H\D$ be the point given by the homeomorphism between $\overline{\D}$ and $\hat{\D}$. If $\{z_n\}$ is a sequence in $\D$ converging to $\underline{x}_\zeta$ in the horosphere topology of $\hat{\D}$, then the sequence $\{f(z_n)\}$ converges to $\hat{f}(\underline{x}_\zeta)$ in the horosphere topology of $\hat{D}$. Hence,
\[
\Gamma(f; \zeta)=\hbox{I}^H_D(\hat{f}(\underline{x}_\zeta)).
\]
This implies in particular that $\hbox{I}^H_D(\hat{f}(\underline{x}_\zeta))=\hbox{I}^C_D(\hat{f}_C(\underline{x}^C_\zeta))$. 

We can also define a {\sl horosphere principal part}. Let $\underline{x}\in \partial_H D$. We say that a sequence {\sl E-converges} to $\underline{x}$ if there exists an admissible sequence $\{u_n\}\subset D$ representing $\underline{x}$ such that $\{z_n\}$ is eventually contained in $E^D_x(\{u_n\}, R)$ for every $R>0$. Clearly the definition does not depend on the admissible sequence $\{u_n\}$ chosen. Also, it is evident that if $\{z_n\}$ is E-converging to $\underline{x}$ then it is also converging to $\underline{x}$ in the horosphere topology. For instance, all sequences in $\D$ which converge non-tangentially to one point $\zeta\in \partial \D$, are also E-converging  to $\underline{x}_\zeta$. However, there exist E-convergent sequences in $\D$ which do not converge non-tangentially to a boundary point. Then we let the {\sl horosphere principal part} $\hbox{II}^H_D(\underline{x})$ of a point $\underline{x}\in \partial_H D$ be the set of points $p\in \partial D$ such that there exists a sequence $\{z_n\}\subset D$ converging to $p$ and E-converging to $\underline{x}$. It is clear that $\hbox{II}^H_D(\underline{x})=\bigcap_{R>0}\overline{E^D_x(\{u_n\}, R)}$ where $\{u_n\}$ is any admissible sequence representing $\underline{x}$. 

Now, if $f:\D \to D$ is a biholomorphism, and $\zeta\in\partial \D$, let $\Gamma_E(f;\zeta)$ be the set of points $p\in\partial D$ such that there exists a sequence $\{z_n\}$ E-convergent to $\underline{x}_\zeta$ such that $f(z_n)\to p$. Since all the notions are defined via intrinsic distance, it follows at once that
\[
\Gamma_E(f;\zeta)=\hbox{II}^H_D(\hat{f}(\underline{x}_\zeta))=\bigcap_{R>0}\overline{E^D_x(\{f(u_n)\}, R)}.
\]
Note that $\Gamma_{NT}(f;\zeta)\subseteq \Gamma_E(f;\zeta)$, hence, 
\[
\hbox{II}^C_D(\hat{f}_C(\underline{x}^C_\zeta))\subseteq \hbox{II}^H_D(\hat{f}(\underline{x}_\zeta)).
\]
In  \cite{GP}, Gaier and Pommerenke constructed examples of univalent mappings $f:\D \to \C$ for which, in our notation,  $\hbox{II}^C_D(\hat{f}_C(\underline{x}^C_\zeta))\neq\hbox{II}^H_D(\hat{f}(\underline{x}_\zeta))$. On the other hand, Twomey \cite{Tw} proved that if $f:\D\to \C$ is starlike then $\Gamma_E(f;\zeta)$ consists of one point for every $\zeta\in \partial \D$. Therefore, in this case, $\hbox{II}^C_D(\hat{f}_C(\underline{x}^C_\zeta))=\hbox{II}^H_D(\hat{f}(\underline{x}_\zeta))$. 

\section{Admissible  sequences, Busemann sequences and horosphere boundary}\label{Admiss}

Let $M$ be a (connected) complex manifold of complex dimension $N$.  We denote by $K_M$ the Kobayashi distance on $M$. We assume that $M$ is (Kobayashi) complete hyperbolic, namely that the metric space $(M,K_M)$ is complete.

Let $\{u_n\}$ be a sequence of points in $M$. For $R>0$ and $x\in M$, we denote by $E_x(\{u_n\},R)$ the {\sl (sequence) horosphere} defined by
$$
E_x(\{u_n\},R):=\{w \in M | \limsup_{n \rightarrow \infty}\left(K_M(w,u_n)-K_M(x,u_n)\right) < \frac{1}{2}\log R\}.
$$

\begin{lem}\label{changebasept}
Let $x, y\in M$. Let $\{u_n\}$ be a sequence in $M$. Then there exist $\alpha, \beta>0$ such that for all $R>0$
\[
E_y(\{u_n\}, \alpha R)\subset E_x(\{u_n\}, R)\subset E_y(\{u_n\}, \beta R).
\]
\end{lem}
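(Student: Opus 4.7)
The plan is to exploit the triangle inequality for the Kobayashi distance, which gives the uniform bound $|K_M(x,u_n)-K_M(y,u_n)| \le K_M(x,y)$ for every $n$. Substituting this into the defining $\limsup$ of the horospheres should let us pass from the base point $y$ to $x$ by paying a multiplicative factor in the radius equal to $\exp(\pm 2K_M(x,y))$.

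More concretely, I would start from the identity
\[
K_M(w,u_n)-K_M(y,u_n)=\bigl[K_M(w,u_n)-K_M(x,u_n)\bigr]+\bigl[K_M(x,u_n)-K_M(y,u_n)\bigr]
\]
and observe that the last bracket is bounded in absolute value by $K_M(x,y)$, independently of $n$. Taking $\limsup_n$ on both sides then gives
\[
\Bigl|\limsup_{n\to\infty}\!\bigl[K_M(w,u_n)-K_M(x,u_n)\bigr]-\limsup_{n\to\infty}\!\bigl[K_M(w,u_n)-K_M(y,u_n)\bigr]\Bigr|\le K_M(x,y).
\]

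Next I would translate this into the containment statements. If $w\in E_x(\{u_n\},R)$, then the $x$-based $\limsup$ is strictly less than $\tfrac{1}{2}\log R$, so the $y$-based $\limsup$ is strictly less than $\tfrac{1}{2}\log R+K_M(x,y)=\tfrac{1}{2}\log(R\,e^{2K_M(x,y)})$, which gives $w\in E_y(\{u_n\},\beta R)$ for $\beta:=e^{2K_M(x,y)}$. The opposite inclusion is completely symmetric after choosing $\alpha:=e^{-2K_M(x,y)}$: if $w\in E_y(\{u_n\},\alpha R)$ then the $y$-based $\limsup$ is below $\tfrac{1}{2}\log R-K_M(x,y)$, and adding back $K_M(x,y)$ keeps it strictly below $\tfrac{1}{2}\log R$, so $w\in E_x(\{u_n\},R)$.

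I do not expect any genuine obstacle here: the entire content of the lemma is the triangle inequality, and the only thing to be mildly careful about is the fact that the defining inequality is strict, which is automatically preserved because the additive correction $K_M(x,y)$ is a constant and independent of $n$. The resulting constants $\alpha=e^{-2K_M(x,y)}$ and $\beta=e^{2K_M(x,y)}$ depend on $x$ and $y$ but not on $R$ nor on the sequence $\{u_n\}$, as required.
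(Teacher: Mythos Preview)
Your proof is correct and follows essentially the same route as the paper: both arguments split $K_M(w,u_n)-K_M(y,u_n)$ into the $x$-based term plus $K_M(x,u_n)-K_M(y,u_n)$ and control the latter via the triangle inequality. The only cosmetic difference is that the paper defines $\tfrac12\log\beta:=\limsup_n[K_M(x,u_n)-K_M(y,u_n)]$ and $\tfrac12\log\alpha:=-\limsup_n[K_M(y,u_n)-K_M(x,u_n)]$, obtaining possibly sharper constants, whereas you use the cruder but equally valid choice $\alpha=e^{-2K_M(x,y)}$, $\beta=e^{2K_M(x,y)}$.
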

\begin{proof}
Let $w\in E_x(\{u_n\},R)$. Let $\frac{1}{2}\log \beta:=\limsup_{n\to \infty}[K_M(x,u_n)-K_M(y,u_n)]$. Note that $\beta\in (0,\infty)$ since 
\[
-\infty<-K_M(x,y)\leq \frac{1}{2}\log \beta\leq K_M(x,y)<+\infty.
\]
 Then
\[
\begin{split}
\limsup_{n\to \infty}[K_M(w,u_n)-K_M(y,u_n)]&\leq \limsup_{n\to \infty}[K_M(w,u_n)-K_M(x,u_n)]\\&+\limsup_{n\to \infty}[K_M(x,u_n)-K_M(y,u_n)]<\frac{1}{2}\log R+\frac{1}{2}\log \beta,
\end{split}
\]
hence $w\in E_y(\{u_n\}, \beta R)$. 

Similarly, if $\frac{1}{2}\log \alpha:=-\limsup_{n\to \infty}[K_M(y,u_n)-K_M(x,u_n)]$, we obtain $E_y(\{u_n\}, \alpha R)\subset E_x(\{u_n\}, R)$.
\end{proof}

\begin{defi}\label{admis-def}
Let $M$ be a complete hyperbolic manifold. Let $x\in M$. A sequence $\{u_n\}$ is {\sl admissible} if

\begin{itemize}
\item[(i)] $\lim\inf_{n \rightarrow \infty}K_M(x,u_n) = \infty$,

\item[(ii)] $\forall R > 0,\ E_x(\{u_n\},R) \neq \emptyset$.
\end{itemize}
We denote by $\Lambda_{M}$ the set of admissible sequences.
\end{defi}

By Lemma \ref{changebasept}, the definition of admissible sequence does not depend on the base point $x$ chosen to define horospheres.

\vskip 0,1cm
The construction of admissible sequences is valid for any geodesic, proper, complete metric space. The existence of admissible sequences in that general situation is given by the following proposition, communicated to the authors by Andrew Zimmer. In case $M$ is a complete hyperbolic manifold for the Kobayashi metric, the existence of a geodesic joining any two points comes from the Hopf-Rinow Theorem in locally compact, complete length spaces. We state Proposition~\ref{adm-prop} for complete Kobayashi hyperbolic manifolds, to keep the context of the paper.

\begin{prop}\label{adm-prop}
 Let $M$ be a complete hyperbolic manifold and let $x \in M$. Then every sequence $\{u_n\}$ of points in $M$, such that $\lim_{n \rightarrow \infty}K_M(u_n,x) = + \infty$, admits an admissible subsequence. In particular, $\Lambda_M\neq \emptyset$.
\end{prop}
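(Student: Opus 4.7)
The plan is to produce, by an Arzelà--Ascoli argument, a geodesic ray emanating from $x$ that is a sub-sequential ``limit direction'' of the geodesic segments $[x,u_n]$; the points on this ray will serve as witnesses that the horospheres $E_x(\{u_{n_k}\},R)$ are non-empty.

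First, I observe that condition (i) of Definition \ref{admis-def} is immediate from the hypothesis $K_M(x,u_n)\to+\infty$, so only (ii) requires work, and only up to passing to a subsequence. Next I invoke the Hopf--Rinow theorem in the category of complete, locally compact length spaces: since $(M,K_M)$ is complete and $M$ is locally compact (being a complex manifold on which $K_M$ induces the manifold topology, by completeness), closed $K_M$-balls are compact and any two points are joined by a geodesic. For each $n$ pick a geodesic $\gamma_n\colon[0,\ell_n]\to M$ with $\gamma_n(0)=x$, $\gamma_n(\ell_n)=u_n$, where $\ell_n:=K_M(x,u_n)\to\infty$; each $\gamma_n$ is $1$-Lipschitz.

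Because the image $\gamma_n([0,T])$ lies in the compact closed ball $\overline{B_{K_M}(x,T)}$ as soon as $\ell_n\geq T$, a standard Arzelà--Ascoli argument together with a diagonal extraction yields a subsequence, still denoted $\{u_n\}$ (i.e.\ $\{u_{n_k}\}$), such that $\gamma_n\to\gamma$ uniformly on compact subsets of $[0,\infty)$, where $\gamma\colon[0,\infty)\to M$ is $1$-Lipschitz. Passing to the limit in $K_M(\gamma_n(s),\gamma_n(t))=|t-s|$ shows that $\gamma$ is in fact a geodesic ray. For fixed $t\geq0$ and $n$ large enough that $\ell_n\geq t$, the triangle inequality and the geodesic property give
\[
K_M(\gamma(t),u_n)-K_M(x,u_n)\leq K_M(\gamma(t),\gamma_n(t))+(\ell_n-t)-\ell_n=K_M(\gamma(t),\gamma_n(t))-t,
\]
and since $K_M(\gamma(t),\gamma_n(t))\to 0$, we conclude
\[
\limsup_{n\to\infty}\bigl(K_M(\gamma(t),u_n)-K_M(x,u_n)\bigr)\leq -t.
\]

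Given any $R>0$, choose $t>-\tfrac12\log R$; then $\gamma(t)\in E_x(\{u_n\},R)$, so (ii) holds for the extracted subsequence. I expect the only delicate point to be citing the right form of Hopf--Rinow: one must check that completeness of $(M,K_M)$ together with the fact that $K_M$ induces the manifold topology gives local compactness, and hence a genuinely proper geodesic metric space, so that the Arzelà--Ascoli compactness used to build $\gamma$ is legitimate. Everything else is a direct triangle inequality computation.
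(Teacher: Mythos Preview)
Your proof is correct and follows essentially the same approach as the paper: extract, via Arzel\`a--Ascoli and properness of $(M,K_M)$, a limit geodesic ray $\gamma$ from the segments $[x,u_n]$, and then show that points $\gamma(t)$ witness non-emptiness of the horospheres. The only difference is cosmetic: the paper first extracts a convergent subsequence of the $1$-Lipschitz functions $b_n(z)=K_M(u_n,z)-K_M(u_n,x)$ and then a convergent subsequence of geodesics, using the relation $b(\gamma(t))=-t$, whereas you bypass the Busemann functions entirely and obtain $\limsup_n[K_M(\gamma(t),u_n)-K_M(x,u_n)]\le -t$ directly from the triangle inequality and $K_M(\gamma(t),\gamma_n(t))\to 0$; your version is slightly more economical but the substance is identical.
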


\begin{proof}
 Consider any sequence $\{u_n\}$ converging to infinity for the Kobayashi distance.
The function $b_n :  M \ni z\mapsto K_M(u_n,z) - K_M(u_n,x)$ is 1-Lipschitz for every $n$. According to the Ascoli-Arzel\`a Theorem, the sequence $\{b_n\}$ admits a subsequence $\{b_{\varphi(n)}\}$ that converges, uniformly on compact subsets of $M$, to some function $b$.
For every $n \geq 0$, let $\gamma_{\varphi(n)} : [0,T_n] \to M$ be a real geodesic joining $x$ to $u_{\varphi(n)}$.
Then by the definition of $b_n$ we get:
$$
\begin{array}{lll}
- \forall 0 \leq t \leq T_{\varphi(n)}, b_{\varphi(n)}(\gamma_{\varphi(n)}(t)) & = & K_D(\gamma_{\varphi(n)}(T_{\varphi(n)}),\gamma_{\varphi(n)}(t)) - K_D(\gamma_{\varphi(n)}(T_{\varphi(n)}),\gamma_{\varphi(n)}(0)) \\
& = & -K_D(\gamma_{\varphi(n)}(t),\gamma_{\varphi(n)}(0))\\
& = & -t.
\end{array}
$$
Since $(M,K_M)$ is a proper metric space, it follows from the Ascoli-Arzel\`a Theorem that the sequence $\{\gamma_{\varphi(n)}\}$ admits a subsequence $\{\gamma_{\sigma(n)}\}$ that converges, locally uniformly, to some $\gamma$. In particular we get $b(\gamma(t)) = -t$ for every $t \geq 0$.

Let us fix $r>0$ and consider a real number $t_r$ satisfying $t_r > -\frac{1}{2}\log(r)$. Then for sufficiently large $n$ we have:
$$
b_{\sigma(n)}(\gamma(t_r)) < \frac{1}{2}\log(r),
$$
meaning that $\gamma(t_r) \in E_x(\{u_{\sigma(n)}\},r)$.
\end{proof}

\vskip 0,5cm
We collect here some properties of horospheres:

\begin{prop}\label{propertyhoro} Let $M$ be a complete hyperbolic manifold and let $x\in M$. Let $\{u_n\}$ be an admissible sequence. Then
\begin{enumerate}
\item $E_x(\{u_n\},R)$ is open for every $R>0$. 
\item If $0<R<R'$ then $E_x(\{u_n\},R)\subset E_x(\{u_n\},R')$. 
\item $\bigcap_{R>0} E_x(\{u_n\}, R)=\emptyset$.
\end{enumerate}
\end{prop}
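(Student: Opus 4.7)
\medskip

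My plan is to treat all three items as straightforward consequences of the definition combined with the triangle inequality for $K_M$. For each sequence horosphere, define the auxiliary function
\[
\varphi(w):=\limsup_{n\to\infty}\bigl[K_M(w,u_n)-K_M(x,u_n)\bigr],
\]
so that $E_x(\{u_n\},R)=\{w\in M:\varphi(w)<\tfrac12\log R\}$. Everything hinges on the observation that each map $w\mapsto K_M(w,u_n)-K_M(x,u_n)$ is $1$-Lipschitz with respect to $K_M$, by the triangle inequality applied in both directions. The admissibility hypothesis is not really needed for (1) and (2); it will enter (3) only indirectly, through the finiteness of $\varphi$.

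For (1), I would first show that $\varphi$ is itself $1$-Lipschitz, hence continuous: if $|f_n(w)-f_n(w')|\le K_M(w,w')$ for every $n$, then $f_n(w)\le f_n(w')+K_M(w,w')$, and passing to $\limsup$ gives $\varphi(w)\le\varphi(w')+K_M(w,w')$; swapping $w$ and $w'$ yields the Lipschitz estimate. The sublevel set $\{\varphi<\tfrac12\log R\}$ is then open by continuity. Item (2) is immediate from the definition: if $0<R<R'$ then $\tfrac12\log R<\tfrac12\log R'$, so every $w$ with $\varphi(w)<\tfrac12\log R$ satisfies $\varphi(w)<\tfrac12\log R'$.

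For (3), I would argue by contradiction. Suppose $w\in\bigcap_{R>0}E_x(\{u_n\},R)$. Then $\varphi(w)<\tfrac12\log R$ for every $R>0$; letting $R\to 0^+$ forces $\varphi(w)=-\infty$. On the other hand, the triangle inequality gives
\[
K_M(w,u_n)-K_M(x,u_n)\ge -K_M(w,x)
\]
for every $n$, so $\varphi(w)\ge -K_M(w,x)>-\infty$, a contradiction. Hence the intersection is empty.

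I do not anticipate a genuine obstacle here: the only mild subtlety is verifying that the $\limsup$ of $1$-Lipschitz functions remains $1$-Lipschitz (not merely lower or upper semicontinuous), which is what powers the openness in (1); and recognising in (3) that admissibility is invoked only to guarantee that the horospheres are nonempty in the first place, while the emptiness of the intersection is a universal consequence of the lower bound $\varphi\ge -K_M(\cdot,x)$.
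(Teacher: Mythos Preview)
Your proof is correct and follows essentially the same approach as the paper: both use the triangle inequality to show that $\varphi(z)\le\varphi(w)+K_M(z,w)$, which gives openness of the sublevel sets in (1), and the lower bound $\varphi(w)\ge -K_M(w,x)$ to rule out any point in the intersection in (3). The only cosmetic difference is that the paper explicitly exhibits a Kobayashi ball $B_K(w,\tfrac12\log\tfrac{R}{R'})$ inside the horosphere rather than invoking continuity of $\varphi$ abstractly.
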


\begin{proof}
(1) If $w\in E_x(\{u_n\},R)$, then there exists $0<R'<R$ such that
\[
\limsup_{n \rightarrow \infty}\left(K_M(w,u_n)-K_M(x,u_n)\right)=\frac{1}{2}\log R'.
\]
Hence, if $z\in B_K(w,\delta)$, the Kobayashi ball of center $w$ and radius $\delta=\frac{1}{2}\log \frac{R}{R'}$, it follows
\begin{equation*}
\begin{split}
K_M(z,u_n)-K_M(x,u_n) &= K_M(z,u_n)-K_M(w,u_n) + K_M(w,u_n)-K_M(x,u_n)\\ &\leq K_M(z,w)+K_M(w,u_n)-K_M(x,u_n),
\end{split}
\end{equation*}
and then
\[
\limsup_{n\to \infty} [K_M(z,u_n)-K_M(x,u_n)]<\frac{1}{2}\log\frac{R}{R'}+\frac{1}{2}\log R'=\frac{1}{2}\log R.
\]
Therefore, $B_K(w,\delta)\subset E_x(\{u_n\},R)$, and the horosphere is open.

(2) is obvious.

(3) Since for every $w\in M$,
\[
-K_M(x, w)\leq K_M(w,u_n)-K_M(x,u_n),
\]
it follows that if $w\in \bigcap_{R>0} E_x(\{u_n\}, R)$ then $K_M(x, w)=\infty$, a contradiction.
\end{proof}

A less obvious property is the following property which states somewhat that horospheres go ``uniformly to the boundary'':

\begin{prop}\label{outsideK}
Let $M$ be a complete hyperbolic manifold and let $x\in M$. Then for every compact set $K\subset M$ there exists $R_0>0$ such that 
\[
E_x(\{u_n\}, R)\cap K=\emptyset
\]
for all $R\leq R_0$ and all admissible sequences $\{u_n\}\subset M$.
\end{prop}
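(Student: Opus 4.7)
The plan is to exploit a uniform lower bound on the horospherical quantity $K_M(w,u_n)-K_M(x,u_n)$ that depends only on $w$ and $x$, not on the sequence $\{u_n\}$. This bound will force any point in a small-radius horosphere to lie far from $x$ in the Kobayashi distance, so on a compact set it will eventually miss the horosphere altogether.

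First, I would apply the triangle inequality for $K_M$ in the form
\[
K_M(w,u_n) \geq K_M(x,u_n) - K_M(x,w),
\]
valid for all $n$ and all $w \in M$. This gives
\[
K_M(w,u_n) - K_M(x,u_n) \geq -K_M(x,w),
\]
and taking $\limsup$ in $n$ yields
\[
\limsup_{n\to\infty}\bigl(K_M(w,u_n)-K_M(x,u_n)\bigr) \geq -K_M(x,w),
\]
uniformly over \emph{all} sequences $\{u_n\}$.

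Next, since $K \subset M$ is compact and $K_M(x,\cdot)$ is continuous (indeed $1$-Lipschitz) on $M$, the quantity $D:=\sup_{w\in K} K_M(x,w)$ is finite. Choose any $R_0$ with $0<R_0\le e^{-2D}$, so that $\frac{1}{2}\log R \le -D$ for every $R\le R_0$. If there were some admissible sequence $\{u_n\}$, some $R\le R_0$ and some $w\in E_x(\{u_n\},R)\cap K$, then by the definition of the horosphere combined with the inequality above we would obtain
\[
-K_M(x,w) \leq \limsup_{n\to\infty}\bigl(K_M(w,u_n)-K_M(x,u_n)\bigr) < \frac{1}{2}\log R \leq -D,
\]
hence $K_M(x,w) > D$, contradicting $w\in K$. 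This gives the required $R_0$.

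I do not anticipate any genuine obstacle: the argument is a single application of the triangle inequality plus compactness. The only point worth flagging is that the lower bound $-K_M(x,w)$ is independent of $\{u_n\}$, which is precisely what allows $R_0$ to be chosen uniformly over all admissible sequences and gives the ``uniformly to the boundary'' phenomenon.
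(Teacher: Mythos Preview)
Your proof is correct and rests on the same idea as the paper's: the triangle inequality gives the uniform lower bound $\limsup_n[K_M(w,u_n)-K_M(x,u_n)]\ge -K_M(x,w)$, which on the compact $K$ is bounded below by $-D$. The paper wraps this in a contradiction argument (extracting a convergent subsequence $z_N\to z_0\in K$ and applying the same inequality at $z_0$), whereas you apply the bound directly and choose $R_0=e^{-2D}$ explicitly; your version is slightly cleaner but the content is identical.
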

\begin{proof}
Assume by contradiction this is not the case. Then for every $N\in \N$ there exist $z_N\in K$ and an admissible sequence $\{u_n^N\}\subset M$ such that $z_N\in E_x(\{u_n^N\}, 1/N)$. Up to subsequences, we can assume that $\{z_N\}$ converges to $z_0\in K$ and that for all $N\in \N$ it holds $K_D(z_N, z_0)<1$. Then, using the triangle inequality, for all $n\in \N$ we have
\begin{equation*}
\begin{split}
- K_D(z_0, x)&\leq \limsup_{n\to \infty}[K_D(z_0, u_n^N)-K_D(x, u_n^N)]\\
&\leq \limsup_{n\to \infty}[K_D(z_N,z_0)+K_D(z_N, u_n^N)-K_D(x, u_n^N)]\\
&\leq 1+\limsup_{n\to \infty}[K_D(z_N, u_n^N)-K_D(x, u_n^N)]<1+\frac{1}{2}\log \frac{1}{N},
\end{split}
\end{equation*} 
a contradiction.
\end{proof}

Another natural class of sequences is the following:

\begin{defi}
Let $M$ be a complete hyperbolic manifold, $x\in M$. A sequence $\{u_n\}\subset M$ is called a {\sl Busemann sequence} if 
\begin{itemize}
\item[(i)] $\lim\inf_{n \rightarrow \infty}K_M(x,u_n) = \infty$,

\item[(ii)] for all $z\in M$ the limit $\lim_{n\to \infty}[K_M(z, u_n)-K_M(x, u_n)]$ exists. 
\end{itemize}
We denote by $\mathcal B_{M}$ the set of Busemann sequences.
\end{defi}

It is not difficult to show that the definition of Busemann sequence does not depend on the base point $x\in M$ chosen.

Busemann sequences have been introduced (under the name {\sl horosphere sequences}) in balanced bounded convex domains in \cite{KKR2}, and later studied in bounded convex domains in \cite{Bud}. The proof of the following result is an adaptation of  the ideas contained in those papers.

\begin{prop}\label{Bus}
Let $M$ be a complete hyperbolic manifold, $x\in M$. 
\begin{enumerate}
\item If $\{u_n\}\subset M$ satisfies $\lim_{n\to \infty}K_M(x, u_n)=\infty$, then there exists a subsequence $\{u_{n_k}\}$ of $\{u_n\}$ which is a Busemann sequence. Therefore, $\mathcal B_M\neq \emptyset$.
\item If $\{u_n\}\subset M$ is an admissible sequence, then there exists a subsequence $\{u_{n_k}\}$ of $\{u_n\}$ which is a Busemann admissible sequence and $E_x(\{u_n\}, R)\subseteq E_x(\{u_{n_k}\}, R)$ for all $R>0$.
\item If for every two points $z, w\in M$, $z\neq w$ there exists a complex geodesic ({\sl i.e.}, an analytic disc which is an isometry between $K_\D$ and $K_D$) which contains $z,w$ then every Busemann sequence is an admissible sequence. 
\end{enumerate}
\end{prop}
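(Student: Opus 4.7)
My plan is to attack the three parts in order, with the first two relying on an Ascoli--Arzel\`a extraction applied to the ``Busemann functions'' $b_n(z):=K_M(z,u_n)-K_M(x,u_n)$, and the third using complex geodesics to manufacture points sitting in arbitrarily small horospheres.

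For part (1), the functions $b_n:M\to\R$ are $1$-Lipschitz with respect to $K_M$ by the triangle inequality, and they are uniformly bounded on each Kobayashi ball since $|b_n(z)|\le K_M(x,z)$. Since $M$ is complete Kobayashi hyperbolic, $(M,K_M)$ is a proper metric space, so Ascoli--Arzel\`a yields a subsequence $\{b_{n_k}\}$ converging locally uniformly to some $1$-Lipschitz function $b$. Pointwise convergence at every $z\in M$ is exactly what is needed: the limit $\lim_k[K_M(z,u_{n_k})-K_M(x,u_{n_k})]=b(z)$ exists. The assumption $\lim_n K_M(x,u_n)=\infty$ is inherited by the subsequence, so condition (i) of the definition of Busemann sequence is preserved and $\{u_{n_k}\}\in\mathcal B_M$.

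For part (2), I would first note that for any subsequence $\{u_{n_k}\}$ of any sequence $\{u_n\}$ and any $w\in M$,
\[
\limsup_{k\to\infty}[K_M(w,u_{n_k})-K_M(x,u_{n_k})]\le \limsup_{n\to\infty}[K_M(w,u_n)-K_M(x,u_n)],
\]
so $E_x(\{u_n\},R)\subseteq E_x(\{u_{n_k}\},R)$ for every $R>0$. In particular, every subsequence of an admissible sequence is admissible. Apply part (1) to extract a Busemann subsequence from the admissible $\{u_n\}$; the inclusion of horospheres is automatic from the inequality above.

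For part (3), let $\{u_n\}$ be a Busemann sequence and fix $R>0$; choose $t>-\tfrac12\log R$. By hypothesis, for each $n$ there is a complex geodesic $\varphi_n:\D\to M$ joining $x$ to $u_n$, and by composing with an automorphism of $\D$ we may take $\varphi_n(0)=x$ and $\varphi_n(s_n)=u_n$ with $s_n\in(0,1)$ and $K_M(x,u_n)\to\infty$. Because $\varphi_n$ is an isometry for the Kobayashi distance, I can pick $z_n\in\varphi_n([0,s_n])$ with $K_M(x,z_n)=t$ and $K_M(z_n,u_n)=K_M(x,u_n)-t$ (this is the place where the complex-geodesic hypothesis does its work). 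Since $\{z_n\}$ sits in the compact Kobayashi ball $\overline{B_K(x,t)}$ (using properness again), after passing to a subsequence $z_{n_k}\to z_\infty$ with $K_M(x,z_\infty)=t$. The Busemann property of $\{u_n\}$ ensures the full limit $\lim_n[K_M(z_\infty,u_n)-K_M(x,u_n)]$ exists, and the triangle inequality from both sides pins it down:
\[
-t\le K_M(z_\infty,u_n)-K_M(x,u_n)\le K_M(z_\infty,z_{n_k})+K_M(z_{n_k},u_{n_k})-K_M(x,u_{n_k}),
\]
which passes to the limit $-t$. Hence the limit equals $-t<\tfrac12\log R$, so $z_\infty\in E_x(\{u_n\},R)$; as $R>0$ was arbitrary, $\{u_n\}$ is admissible.

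The main obstacle I anticipate is the bookkeeping in part (3): making sure that the point $z_n$ one pulls off the geodesic can be controlled in a precompact region independently of $n$, and that one may upgrade the limsup inequality to an equality using the Busemann hypothesis (rather than just admissibility along a subsequence). Parts (1) and (2) are, by contrast, rather mechanical once the Ascoli--Arzel\`a viewpoint is adopted.
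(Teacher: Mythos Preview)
Your proposal is correct and follows essentially the same route as the paper: Ascoli--Arzel\`a on the $1$-Lipschitz functions $b_n$ for (1), the limsup-of-subsequence inequality for (2), and intermediate points $z_n$ on complex geodesics at fixed Kobayashi distance $t$ from $x$, combined with compactness of the Kobayashi sphere and the Busemann property, for (3). The only blemish is the mixed indices $n$ and $n_k$ in your displayed inequality in (3); read with $n=n_k$ throughout it is exactly the paper's computation, and note that only the upper bound (not the equality with $-t$) is needed to place $z_\infty$ in $E_x(\{u_n\},R)$.
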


\begin{proof}
(1) Let $\{z_m\}_{m\in \N}$ be a dense set in $M$. Arguing as in the proof of Proposition \ref{adm-prop}, one can find a subsequence $\{u_{n_j}\}$ such that $f_j(z):=K_D(z, u_{n_j})-K_D(x, u_{n_j})$ has the property that $\lim_{j\to \infty} f_j(z_m)$ exists for all $m\in \N$. If $z\in M$ and $\{z_{m_k}\}$ is a subsequence converging to $z$, by the triangle inequality it follows 
\[
|f_j(z)-f_j(z_{m_k})|\leq K_D(z, z_{m_k}).
\]
From this it is easy to see that $\lim_{j\to \infty}f_j(z)$ exists, hence $\{u_{n_j}\}\in \mathcal B_M$.

(2) If $\{u_n\}\in \Lambda_M$, then by (1) we can extract a Busemann subsequence $\{u_{n_k}\}$. Let $R>0$ and let $z\in E_x(\{u_n\}, R)$. Then
\[
\limsup_{k\to \infty}[K_D(z,u_{n_k})-K_D(x, u_{n_k})]\leq \limsup_{n\to \infty}[K_D(z,u_{n})-K_D(x, u_{n})]<\frac{1}{2}\log R,
\]
which proves that $z\in E_x(\{u_{n_k}\}, R)$.

(3) Assume that every two points of $M$ belong to a complex geodesic and let $\{u_n\}\in \mathcal B_M$. Fix $R>0$ and let $0<r<\min\{1,R\}$. Let $B_K(x, -\frac{1}{2}\log r)$ be the Kobayashi ball of center $x$ and radius $-\frac{1}{2}\log r>0$. Since $K_D(x, \{u_n\})\to \infty$, we can assume that $u_n\not\in B_K(x, -\frac{1}{2}\log r)$ for all $n\in \N$. By hypothesis $x$ and $u_n$ are contained in a complex geodesic.  Hence, there exists $q_n$ in the same complex geodesic such that $K_D(x, q_n)=-\frac{1}{2}\log r$ and
\[
K_D(x, u_n)=K_D(x, q_n)+K_D(q_n, u_n)=-\frac{1}{2}\log r+K_D(q_n, u_n).
\]
Note that $q_n\in \partial B_K(x, -\frac{1}{2}\log r)$ and the latter is compact. Hence we can extract a converging subsequence $\{q_{n_k}\}$ converging to some $q\in \partial B_K(x, -\frac{1}{2}\log r)$. Now, taking into account that $\{u_n\}$ is a Busemann sequence, we have
\begin{equation*}
\begin{split}
\limsup_{n\to \infty}[K_D(q,u_n)-K_D(x,u_n)]&=\lim_{k\to \infty}[K_D(q,u_{n_k})-K_D(x,u_{n_k})]\\&\leq \frac{1}{2}\log r+\lim_{k\to \infty}K_D(q, q_{n_k})=\frac{1}{2}\log r,
\end{split}
\end{equation*}
that is $q\in E_x(\{u_n\}, R)$. Hence $\{u_n\}$ is admissible.
\end{proof}

\begin{defi}\label{equiv-def} Let $M$ be a complete hyperbolic manifold and let $x\in M$.
\begin{itemize}
\item[(i)] Two sequences $\{u_n\}$, $\{v_n\}$ in $\Lambda_M$ are {\sl equivalent} and we write $\{u_n\} \sim \{v_n\}$ if for every $R>0$ there exist $R'>0$ and $R''>0$ such that 
\[
E_x(\{u_n\}, R') \subseteq E_x(\{v_n\},R), \quad E_x(\{v_n\},  R'')\subset E_x(\{u_n\},R).
\]
\item[(ii)] We denote by $\partial_HM$ the {\sl horosphere boundary of $M$} defined by $\partial_HM:=\Lambda_M/\sim$.
\end{itemize}
\end{defi}

It is easy to see that $\sim$ is an equivalence relation on $\Lambda_M$. Moreover, by Lemma \ref{changebasept}, the definition of equivalent admissible sequences is independent of the base point $x$.

\begin{rem}
By Proposition \ref{adm-prop}, the horosphere boundary of a complete hyperbolic manifold is never empty.
\end{rem}

\section{Horosphere topology, impression and principal part}\label{topology}

Let $M$ be a complete hyperbolic complex manifold, $x\in M$, and let $\partial_H M$ be its horosphere boundary. Let $\hat{M}:=M\cup \partial_H M$. We define a topology on $\hat{M}$, which coincides with the topology of $M$ on $M$, as follows:

\begin{defi}\label{conv-out-out}
A sequence $\{\underline{y_m}\}\subset \partial_H M$ converges to $\underline{y}\in \partial_H M$ if there exist admissible sequences $\{u^m_n\}_{n\in \N}$ with $[\{u^m_n\}]=\underline{y_m}$ and an admissible sequence $\{u_n\}$ with $[\{u_n\}]=\underline{y}$ with the property that for every $R>0$ there exists $m_R\in\N$ such that
\begin{equation}\label{def-horo-converge}
E_x(\{u^m_n\}, R)\cap E_x(\{u_n\}, R)\neq \emptyset \quad \forall m\geq m_R.
\end{equation}
\end{defi}

By Lemma \ref{changebasept}, the previous definition of convergence is independent of the base point $x$.

\begin{defi}\label{conv-int-out}
A sequence $\{z_m\}\subset M$ converges to $\underline{y}\in \partial_H M$ if there exist  $\{u_n\}, \{v^j_n\}_{j\in \N}\subset \Lambda_M$ with $[\{u_n\}]=\underline{y}$ and with the property that for all $R>0$  there exists $m_R\in \N$ such that $z_m\in E_x(\{v^m_n\}, R)$,  and $E_x(\{v_n^m\}, R)\cap E_x(\{u_n\}, R)\neq \emptyset$ for all $m\geq m_R$.
\end{defi}
It is clear from  Lemma \ref{changebasept} that this definition does not depend  on the base point $x$.

\begin{rem}\label{conv-in-like-out}
Note that, if the $\{v_n^j\}$ are admissible sequences as in Definition \ref{conv-int-out}, and $\underline{y}_j:=[\{v_n^j\}],$ then $\{\underline{y}_j\}$ converges to $\underline{y}$ in the sense of Definition \ref{conv-out-out}.
\end{rem}

A sequence $\{z_n\}\subset M\subset \hat{M}$ converges to $z\in M$ if $\lim_{n\to \infty}K_M(z_n, z)=0$. Now we can define a topology on $\hat{M}$ as follows:

\begin{defi}
A subset $C\subset \hat{M}$ is {\sl closed} if for every sequence $\{z_n\}\subset C$ converging to $z\in \hat{M}$, it follows that $z\in C$.
\end{defi}

 We call such a topology the {\sl horosphere topology} of $M$ and we denote it by $\mathcal T_H(\hat{M})$.

\begin{rem}\label{boundaryconv}
By definition, a sequence $\{z_n\}\subset \partial_H M$ can converge only to  points which belong to $\partial_H M$ in the horosphere topology.
\end{rem}

\begin{rem}
A relatively compact sequence $\{z_n\}\subset M$ can not converge to a point $\underline{y}\in \partial_H M$ by Proposition \ref{outsideK}. Therefore the horosphere topology restricted on $M$ coincides with the topology of $M$. \end{rem}

If $M, N$ are two complete hyperbolic complex manifolds and $F: M \to N$ is a biholomorphism, then $F$ induces a bijective map $\hat{F}: \hat{M}\to \hat{N}$ as follows. If $z\in M$ then $\hat{F}(z)=F(z)$. If $\underline{y}\in \partial_H M$, let $\{u_n\}$ be an admissible sequence in $M$ which represents $\underline{y}$. Since $F$ is an isometry between $K_M$ and $K_N$, it follows easily that $\{F(u_n)\}$ is an admissible sequence in $N$ and therefore it represents a point $\hat{F}(\underline{y})\in \partial_H N$. Clearly, since $F$ is an isometry between the Kobayashi distance of $M$ and that of $N$, such a point does not depend on the admissible sequence chosen to represent it. 

Moreover, by the same token, it follows that $\hat{F}^{-1}=\widehat{F^{-1}}$ and  $\hat{F}$ and $\widehat{F^{-1}}$ map closed sets onto closed sets in the horosphere topology. Therefore we have

\begin{theo}\label{extension}
Let $M, N$ be complete hyperbolic complex manifolds. Let $F: M \to N$ be a biholomorphism. Then $F$ extends to a homeomorphism $\hat{F}: \hat{M}\to \hat{N}$.
\end{theo}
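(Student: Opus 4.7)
The plan is to verify, essentially tautologically, that every object in the construction of $\mathcal T_H(\hat M)$ is formulated purely in terms of the Kobayashi distance, so that the isometry property of a biholomorphism transports it verbatim from $M$ to $N$. Most of the work has already been done in the paragraph preceding the theorem, where $\hat F$ is constructed as a set-theoretic bijection; what remains is to check continuity of $\hat F$ and of $\hat F^{-1}=\widehat{F^{-1}}$, which reduces to preservation of the three types of sequential convergence used in Definitions \ref{conv-out-out} and \ref{conv-int-out}.

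The pivotal observation I would record first is the horosphere identity
\[
F\bigl(E_x(\{u_n\},R)\bigr)=E_{F(x)}\bigl(\{F(u_n)\},R\bigr),
\]
valid for every $x\in M$, every sequence $\{u_n\}\subset M$ and every $R>0$. This follows from $K_N(F(z),F(w))=K_M(z,w)$ by substituting into the defining $\limsup$. Three easy consequences are: (i) $F$ maps admissible sequences to admissible sequences (so $\hat F$ is defined on $\partial_H M$); (ii) $\{u_n\}\sim\{v_n\}$ in $\Lambda_M$ if and only if $\{F(u_n)\}\sim\{F(v_n)\}$ in $\Lambda_N$ (so $\hat F$ descends to the quotient); (iii) applying the same to $F^{-1}$ shows that $\hat F$ is a bijection with $\hat F^{-1}=\widehat{F^{-1}}$.

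Next I would verify continuity of $\hat F$ by checking each of the three convergence notions. Interior-to-interior convergence is just the Kobayashi-continuity of the biholomorphism $F$, hence trivial. For boundary-to-boundary convergence $\underline{y_m}\to\underline{y}$ given by representatives $\{u^m_n\},\{u_n\}$ with $E_x(\{u^m_n\},R)\cap E_x(\{u_n\},R)\ne\emptyset$ for $m\ge m_R$, applying $F$ and using the horosphere identity produces
\[
E_{F(x)}(\{F(u^m_n)\},R)\cap E_{F(x)}(\{F(u_n)\},R)\ne\emptyset,
\]
which is exactly the condition required for $\hat F(\underline{y_m})\to\hat F(\underline{y})$ in $\hat N$ (the base point $F(x)$ is admissible by Lemma \ref{changebasept}, which guarantees that Definitions \ref{conv-out-out} and \ref{conv-int-out} are base-point independent). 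Interior-to-boundary convergence $z_m\to\underline{y}$ is handled in the same way: the additional condition $z_m\in E_x(\{v^m_n\},R)$ transforms into $F(z_m)\in E_{F(x)}(\{F(v^m_n)\},R)$, so the full definition is preserved.

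Finally, applying the identical argument to $F^{-1}$ (also a biholomorphism, hence a Kobayashi isometry) shows that $\widehat{F^{-1}}$ preserves the three convergences as well. Since closed sets in $\mathcal T_H$ were defined precisely as those stable under the three convergences, both $\hat F$ and $\hat F^{-1}$ carry closed sets to closed sets, and $\hat F$ is a homeomorphism. I do not expect any real obstacle: the only subtle point is that $F$ a priori transports base points, so one must invoke Lemma \ref{changebasept} to compare with horospheres defined relative to an arbitrary base point in $N$; this is a bookkeeping issue rather than a mathematical one.
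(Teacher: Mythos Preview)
Your proposal is correct and follows essentially the same approach as the paper: the paper's proof is the brief paragraph preceding the theorem, which asserts that since $F$ is a Kobayashi isometry, $\hat F$ and $\widehat{F^{-1}}$ map closed sets to closed sets, and you have simply written out the tautological verification in detail. Your explicit horosphere identity and case-by-case check of the three convergence types are exactly the content implicit in the paper's phrase ``by the same token.''
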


In case of domains in $\C^N$ one can relate convergence to the horosphere boundary in the horosphere topology with (Euclidean) convergence to the Euclidean boundary. This can be done in two natural ways using horosphere topology, and gives rise to the notion of horosphere impression and horosphere principal part. These notions, as it will be clear later on, are strictly related to extension of univalent maps from bounded strongly pseudoconvex domains. 

As a matter of notation, if $D\subset \C^N$ is a domain, we denote by $\overline{D}^{\mathbb C \mathbb P^N}$ its closure in $\mathbb C \mathbb P^N$.
 
\begin{defi}\label{impr-def}
Let $D$ be a complete hyperbolic domain in $\mathbb C^n$. Let $\underline{x} \in \partial_H D$. We say that a point $p\in \overline{D}^{\mathbb C \mathbb P^N}$ belongs to the {\sl horosphere impression} of $\underline{x}$, denoted by $\hbox{ I}^H_D(\underline{x})$,  if there exists a sequence $\{w_n\}\subset D$ such that $\{w_n\}$ converges to $\underline{x}$ in the horosphere topology and $\{w_n\}$ converges to $p$ in ${\mathbb C \mathbb P^N}$.
\end{defi}

\begin{rem}
By Proposition \ref{outsideK}, for every $\underline{x} \in \partial_H D$ it holds  $\hbox{ I}^H_D(\underline{x})\cap D=\emptyset$ . 
\end{rem}

In order to define the horosphere principal part, we first give a general definition of convergence in the horosphere topology, which, somehow, replaces the notion of non-tangential limits:

\begin{defi}\label{E-lim}
Let $M$ be a complete hyperbolic manifold, $x\in M$. Let $\underline{x} \in \partial_H M$. We say that a sequence $\{z_n\}\subset M$ is {\sl $E\hbox{-}$converging to $\underline{x}$}, and we write
\[
\hbox{E}-\lim_{n\to \infty} z_n=\underline{x},
\]
if for one---and hence any---admissible sequence $\{u_n\}$ representing $\underline{x}$, the sequence $\{z_n\}$ is eventually contained in $E_x(\{u_n\}, R)$ for all $R>0$.
\end{defi}

Note that, according to the definition of horosphere topology, if $\{z_n\}$ is E-converging to $\underline{x}$ then, in particular, it is  converging to $\underline{x}$ in the horosphere topology.

We are now ready to define the {\sl horosphere principal part}:

\begin{defi}\label{princpart-def}
Let $D$ be a complete hyperbolic domain in $\mathbb C^n$. Let $\underline{x} \in \partial_H D$. We say that a point $p\in \overline{D}^{\mathbb C \mathbb P^N}$ belongs to the {\sl horosphere principal part} of $\underline{x}$, denoted by $\hbox{II}^H_D(\underline{x})$,  if there exists a sequence $\{w_n\}\subset D$ such that $\{w_n\}$ E-converges to $\underline{x}$  and $\{w_n\}$ converges to $p$ in ${\mathbb C \mathbb P^N}$.\end{defi}

Clearly, $\hbox{II}^H_D(\underline{x})\subset \hbox{I}^H_D(\underline{x})$ for all $\underline{x}\in \partial_H D$. By the very definition we have
\begin{lem}\label{II-to-hor}
Let $D$ be a complete hyperbolic domain in $\mathbb C^n$, $x\in D$. Let $\underline{x} \in \partial_H D$. Then
\[
\hbox{II}^H_D(\underline{x})=\bigcap_{R>0} \overline{E_x(\{u_n\}, R)}^{\mathbb C \mathbb P^N},
\]
where $\{u_n\}\subset D$ is any admissible sequence representing $\underline{x}$.
\end{lem}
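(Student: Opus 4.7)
The plan is to prove the two set-inclusions separately, relying on the nestedness and openness of horospheres (Proposition \ref{propertyhoro}) together with the definitions of E-convergence (Definition \ref{E-lim}) and of equivalence of admissible sequences (Definition \ref{equiv-def}). Throughout, fix an admissible sequence $\{u_n\}$ representing $\underline{x}$.

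For the inclusion $\hbox{II}^H_D(\underline{x})\subseteq \bigcap_{R>0}\overline{E_x(\{u_n\},R)}^{\mathbb{CP}^N}$: take $p\in \hbox{II}^H_D(\underline{x})$, so by Definition \ref{princpart-def} there is a sequence $\{w_n\}\subset D$ with $w_n\to p$ in $\mathbb{CP}^N$ and $\mathrm{E}\text{-}\lim_n w_n=\underline{x}$. By Definition \ref{E-lim}, for every $R>0$ the tail of $\{w_n\}$ lies in $E_x(\{u_n\},R)$, so $p\in \overline{E_x(\{u_n\},R)}^{\mathbb{CP}^N}$. Since $R>0$ was arbitrary, $p$ lies in the intersection.

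For the reverse inclusion, let $p$ belong to the intersection. I fix a metric on $\mathbb{CP}^N$ compatible with its topology. For each integer $k\geq 1$, since $p\in \overline{E_x(\{u_n\},1/k)}^{\mathbb{CP}^N}$, I pick $w_k\in E_x(\{u_n\},1/k)$ with $\mathbb{CP}^N$-distance from $p$ less than $1/k$; so $w_k\to p$ in $\mathbb{CP}^N$. Given any $R>0$, choose $k_0$ with $1/k_0<R$: by the nestedness $E_x(\{u_n\},R')\subset E_x(\{u_n\},R)$ for $R'<R$ (Proposition \ref{propertyhoro}(2)), one has $w_k\in E_x(\{u_n\},1/k)\subset E_x(\{u_n\},R)$ for all $k\geq k_0$. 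Hence $\{w_k\}$ is eventually contained in every $E_x(\{u_n\},R)$, i.e.\ $\mathrm{E}\text{-}\lim_k w_k=\underline{x}$, and so $p\in \hbox{II}^H_D(\underline{x})$.

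Finally, to see that the right-hand side is genuinely independent of the admissible sequence chosen to represent $\underline{x}$ (which legitimizes the statement "where $\{u_n\}$ is any admissible sequence"), take $\{v_n\}\sim\{u_n\}$. By Definition \ref{equiv-def}(i), for every $R>0$ there exists $R'>0$ with $E_x(\{u_n\},R')\subset E_x(\{v_n\},R)$, hence $\overline{E_x(\{u_n\},R')}^{\mathbb{CP}^N}\subset \overline{E_x(\{v_n\},R)}^{\mathbb{CP}^N}$; therefore the intersection over $R$ for $\{u_n\}$ is contained in the one for $\{v_n\}$, and by symmetry the two sets coincide. No real obstacle appears: the argument is a direct unwinding of the definitions, and the only technical ingredient is the nestedness of horospheres which lets one diagonalize $R\downarrow 0$ into a single $\mathrm{E}$-convergent sequence.
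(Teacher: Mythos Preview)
Your proof is correct and follows exactly the approach the paper intends: the paper states the lemma with the preface ``By the very definition we have'' and gives no further argument, so your writeup is simply a careful unwinding of Definitions~\ref{E-lim}, \ref{princpart-def}, and \ref{equiv-def} together with the nestedness of horospheres from Proposition~\ref{propertyhoro}(2). Nothing more is needed.
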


\section{Strongly pseudoconvex domains}\label{strongly}

Abate (see \cite{ABZ, Aba}) defined the small and big horospheres as follows:
\begin{defi}
Let $D\subset \C^N$ be a domain and let $p\in \partial D$. Let $R>0$ and $x\in D$. The {\sl small horosphere} $E_x(p,R)$ of vertex $p$ and radius $R$ is defined by
\[
E_x(p,R):=\{w\in D: \limsup_{z\to p}[K_D(w,z)-K_D(x,z)]<\frac{1}{2}\log R\}.
\]
The {\sl big horosphere} $F_x(p,R)$ of vertex $p$ and radius $R$ is defined by
\[
F_x(p,R):=\{w\in D: \liminf_{z\to p}[K_D(w,z)-K_D(x,z)]<\frac{1}{2}\log R\}.
\]
\end{defi}

The following result follows from  \cite[Lemma 1.1, Thm. 1.7]{ABZ} and \cite[Thm. 2.6.47]{Aba}

\begin{theo}\label{intersF}
Let $D\subset \C^N$ be a bounded strongly pseudoconvex domain with $C^2$ boundary. Let $p\in \partial D$ and $x\in D$. Then 
\begin{enumerate}
\item for every $R>0$, $E_x(p,R)\subset F_x(p,R)$,
\item for every $R'>R>0$, $E_x(p,R)\subset E_x(p,R'), F_x(p,R)\subset F_x(p,R')$,
\item $\cap_{R>0}  F_x(p,R)=\emptyset$,
\item for every $R>0$, $\overline{F_x(p,R)}\cap \partial D=\{p\}$.
\end{enumerate}
Moreover, if $D$ is a strongly convex domain with $C^3$ boundary, then for every $R>0$ it holds $E_x(p,R)=F_x(p,R)$.
\end{theo}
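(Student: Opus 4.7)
Parts (1) and (2) are immediate from the definitions: since $\liminf_{z\to p}\le \limsup_{z\to p}$, any $w$ with $\limsup_{z\to p}[K_D(w,z)-K_D(x,z)] < \frac{1}{2}\log R$ automatically satisfies the corresponding liminf bound, giving (1), and monotonicity in $R$ follows from monotonicity of $\log$. For (3), I would invoke the triangle inequality $K_D(w,z) - K_D(x,z) \ge -K_D(w,x)$, which yields
$$
\liminf_{z\to p}[K_D(w,z)-K_D(x,z)] \ge -K_D(w,x) > -\infty
$$
for every $w\in D$. Choosing $R$ small enough that $\frac{1}{2}\log R < -K_D(w,x)$ forces $w\notin F_x(p,R)$, so $\bigcap_{R>0}F_x(p,R)=\emptyset$.

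The substantive content is (4). My plan is to exploit the sharp boundary behaviour of $K_D$ in bounded strongly pseudoconvex domains with $C^2$ boundary (Graham's upper estimate combined with Forstneric--Rosay localization of the Kobayashi metric). These give an upper bound $K_D(x,z) \le -\frac{1}{2}\log\delta(z) + C_x$ as $z\to\partial D$, where $\delta$ denotes the Euclidean distance to $\partial D$, and a matching lower bound
$$
K_D(w,z) \ge -\tfrac{1}{2}\log\delta(w) - \tfrac{1}{2}\log\delta(z) + C
$$
valid when $w$ stays near a point $q\in\partial D$ while $z$ approaches $p\ne q$, with the error bounded along such separated approaches. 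Subtracting yields
$$
\liminf_{z\to p}\bigl[K_D(w,z)-K_D(x,z)\bigr] \ge -\tfrac{1}{2}\log\delta(w) + C',
$$
which diverges to $+\infty$ as $w\to q$. Hence for fixed $R$, no $w$ sufficiently close to $q$ can lie in $F_x(p,R)$, proving $q\notin\overline{F_x(p,R)}$ for each $q\in\partial D\setminus\{p\}$. To check that $p$ itself lies in $\overline{F_x(p,R)}$ I would take a sequence $w_n\to p$ along the inner normal to $\partial D$ at $p$ and verify, working in a local model via a peaking function (or by comparison with the ball), that $w_n \in E_x(p,R)\subset F_x(p,R)$ for large $n$.

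For the final assertion on strongly convex domains with $C^3$ boundary, my approach is to appeal to Lempert's theory of complex geodesics: through each $x\in D$ and each $p\in\partial D$ there is an extremal disc joining them with $C^{1/2}$ boundary regularity, and the distance-decreasing property together with a careful analysis of these discs shows that the Busemann-type limit
$$
\lim_{z\to p}\bigl[K_D(w,z) - K_D(x,z)\bigr]
$$
exists pointwise for every $w\in D$, not merely as a liminf or limsup. Since $\limsup=\liminf$, the identity $E_x(p,R)=F_x(p,R)$ drops out immediately. The main obstacle in the whole program is supplying the Kobayashi boundary estimates with enough precision to handle $w$ and $z$ simultaneously near \emph{distinct} boundary points; these localization estimates are classical in the strongly pseudoconvex literature but technically delicate, which is why the result is quoted directly from Abate's work in the paper rather than reproved here.
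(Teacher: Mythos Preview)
The paper does not actually prove this theorem: it is stated as a quotation of \cite[Lemma~1.1, Thm.~1.7]{ABZ} and \cite[Thm.~2.6.47]{Aba}, with no argument given. Your proposal correctly reconstructs the classical proof. Parts (1)--(3) are exactly the one-line arguments you give. For (4), your use of the two-sided Kobayashi boundary estimates (upper bound $K_D(x,z)\le -\tfrac12\log\delta(z)+C$ and the lower bound $K_D(w,z)\ge -\tfrac12\log\delta(w)-\tfrac12\log\delta(z)+C$ for $w,z$ near distinct boundary points) is precisely the argument in Abate's references; indeed the paper itself reproduces this computation later, in the proof of Lemma~\ref{squeeze}, to handle the more general set $F_x(V,R)$. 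Your treatment of $p\in\overline{F_x(p,R)}$ via the inner normal is also the standard one (and is made precise in the paper's Proposition~\ref{pointstrict}). For the ``moreover'' clause, your appeal to Lempert's complex geodesics to force existence of the full limit $\lim_{z\to p}[K_D(w,z)-K_D(x,z)]$ is again the approach of \cite[Thm.~2.6.47]{Aba}. So there is no discrepancy to discuss: your sketch is correct and matches the cited sources, while the paper simply imports the result.
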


In what follows, we need also the following boundary estimates for the Kobayashi distance, see, {\sl e.g.}, \cite{FR}, \cite[Thm. 2.3.56]{Aba},  \cite[Thm. 2.3.54]{Aba} and \cite[p. 530-531]{BB}. Note that, in those references,  the estimates are proved for a given point in $\partial D$. However, the compactness of  $\partial D$ easily allows  to get uniform estimates. As a matter of notation, if $D\subset \C^n$ is a domain, we denote by $\delta(z)$ the distance of $z\in D$ from the boundary $\partial D$.

\begin{lem}\label{Kobest}
Let $D\subset \C^N$ be a bounded domain with $C^2$ boundary. Then  there exist  $C>0$ and $\varepsilon_0>0$ such that for every $p\in \partial D$ and for every $z,w \in D \cap B(p,\varepsilon_0)$ it holds:
\begin{equation}\label{estimateabove}
K_{D}(w,z) \leq \frac{1}{2}\log\left(1+\frac{|w-z|}{\delta(w)}\right) + \frac{1}{2}\log\left(1+\frac{|w-z|}{\delta(z)}\right)+ C,
\end{equation}
where  $|w-z|$ denotes the Euclidean distance between $w$ and $z$ and $\delta(z)$ the distance from $z$ to $\partial D$.

Moreover, if  $D$ is strongly pseudoconvex, there exist $\varepsilon_1>\varepsilon_0>0$ and $C'>0$ such that for all $p\in \partial D$, $x \not\in D \cap B(p,\varepsilon_1)$ and $z\in D \cap B(p,\varepsilon_0)$ it holds
\begin{equation}\label{estimatebelow}
K_D(x,z) \geq C'-\frac{1}{2}\log(\delta(z)).
\end{equation}
\end{lem}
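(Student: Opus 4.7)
The plan is to reduce Lemma \ref{Kobest} to the pointwise versions of both estimates---which are precisely the theorems in the cited references---and then use compactness of $\partial D$ to make the constants uniform in $p$. I would treat the upper and lower bounds in turn.

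For \eqref{estimateabove}, I invoke the pointwise statement from \cite[Thm.~2.3.56]{Aba}: for each $p\in\partial D$ there exist $C_p,\varepsilon_p>0$ such that \eqref{estimateabove} holds, with $C_p$ in place of $C$, on $D\cap B(p,\varepsilon_p)$. The proof there uses only $C^2$ regularity of $\partial D$: near $p$ one joins $w$ and $z$ by a short path built from holomorphic discs aimed along (approximately) the inward normal directions issued from the foot points of $w$ and $z$ on $\partial D$, and bounds the Kobayashi length of each disc by the Poincar\'e length of its preimage in $\D$. To globalize, I would extract a finite subcover $\{B(p_i,\varepsilon_{p_i}/2)\}_{i=1}^k$ of $\partial D$ and set $\varepsilon_0:=\min_i \varepsilon_{p_i}/2$ and $C:=\max_i C_{p_i}$. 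Any $p\in\partial D$ lies in some $B(p_i,\varepsilon_{p_i}/2)$, whence $D\cap B(p,\varepsilon_0)\subset D\cap B(p_i,\varepsilon_{p_i})$ and the pointwise estimate at $p_i$ yields \eqref{estimateabove} with the uniform constant $C$.

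For \eqref{estimatebelow}, I invoke the Graham-type pointwise estimate \cite[Thm.~2.3.54]{Aba}: for each $p\in\partial D$ there exist $C'_p,\varepsilon'_p>0$ and an open neighborhood $U_p\ni p$ in $\C^N$ such that $K_D(x,z)\geq C'_p-\frac{1}{2}\log\delta(z)$ whenever $x\in D\setminus U_p$ and $z\in D\cap B(p,\varepsilon'_p)$. This is where strong pseudoconvexity enters: one builds a local holomorphic peak function at $p$ from a strictly plurisubharmonic defining function for $D$ and applies the Schwarz--Pick lemma to control the Kobayashi distance from below by the Poincar\'e distance on $\D$ pulled back through this peak function. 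To globalize, I would choose finitely many balls $B(p_i,r_i)$ covering $\partial D$ with $\overline{B(p_i,2r_i)}\cap\partial D\subset U_{p_i}$ and $r_i<\varepsilon'_{p_i}$, and set $\varepsilon_0:=\min_i r_i/2$, $\varepsilon_1:=\min_i r_i$, $C':=\min_i C'_{p_i}$. For $p\in\partial D$, pick $p_i$ with $p\in B(p_i,r_i)$; then $D\cap B(p,\varepsilon_0)\subset D\cap B(p_i,\varepsilon'_{p_i})$, while $x\notin D\cap B(p,\varepsilon_1)$ forces $x\notin U_{p_i}$ by the nested-ball choice, so the pointwise estimate at $p_i$ yields \eqref{estimatebelow} with the uniform constant $C'$.

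The main obstacle is purely bookkeeping in the lower bound: one must arrange the nested neighborhoods $B(p_i,r_i)\subset B(p_i,2r_i)\subset U_{p_i}$ so that $\varepsilon_1>\varepsilon_0$ holds simultaneously for all $i$, and so that the separation $\varepsilon_1>\varepsilon_0$ really does push exterior points $x$ out of every relevant local neighborhood. No new analytic ingredient is needed beyond what is already in the cited references; the content of Lemma \ref{Kobest} in excess of those is precisely the uniformity in $p$, which compactness of $\partial D$ delivers routinely---hence the authors' remark that the uniformization is easy.
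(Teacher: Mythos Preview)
Your plan coincides with the paper's: there is no proof given there either, only the citations to \cite{FR}, \cite[Thm.~2.3.54, Thm.~2.3.56]{Aba}, \cite{BB} together with the remark that compactness of $\partial D$ upgrades the pointwise estimates to uniform ones. Your compactness argument for the upper bound \eqref{estimateabove} is correct as written.

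For the lower bound \eqref{estimatebelow}, however, your bookkeeping is inverted. You want the implication ``$x\notin B(p,\varepsilon_1)\Rightarrow x\notin U_{p_i}$'', i.e.\ $U_{p_i}\cap D\subset B(p,\varepsilon_1)$. But your nesting $B(p_i,2r_i)\subset U_{p_i}$ goes the wrong way (it makes $U_{p_i}$ \emph{large}), and your choice $\varepsilon_1=\min_i r_i$ is then certainly too small to contain any such $U_{p_i}$. The repair is to go in the opposite direction: choose $\rho_i>0$ with $U_{p_i}\cap D\subset B(p_i,\rho_i)$ (possible since $D$ is bounded; equivalently, replace $U_{p_i}$ by a ball, which only shrinks the set $D\setminus U_{p_i}$ on which the pointwise estimate is asserted---this is harmless because the estimate persists on any subset), cover $\partial D$ by $B(p_i,\varepsilon'_{p_i}/2)$, and take $\varepsilon_0:=\min_i\varepsilon'_{p_i}/2$ and $\varepsilon_1:=\max_i\rho_i+\max_i\varepsilon'_{p_i}/2$. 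Then for $p\in B(p_i,\varepsilon'_{p_i}/2)$ one has $B(p_i,\rho_i)\subset B(p,\varepsilon_1)$, so $x\notin B(p,\varepsilon_1)$ forces $x\notin U_{p_i}$ as required. This is exactly the ``bookkeeping obstacle'' you flagged, but as written your construction does not actually resolve it.
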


We start with the following lemma.

\begin{lem}\label{squeeze}
Let $D\subset \C^N$ be a bounded strongly pseudoconvex domain with $C^2$ boundary. Let $V\subset \partial D$ be a closed set and $x\in D$. Then for every open neighborhood $U$ of $V$ there exists $R_0>0$ such that 
\[
\bigcup_{p\in V}F_x(p,R)\subset U\cap D \quad \hbox{for all $0<R\leq R_0$}.
\] 
 In particular, if $V, V'\subset \partial D\setminus\{p\}$ are two closed sets such that $V\cap V'= \emptyset$ then there exists $R_0>0$ such that, for all $0<R<R_0$ it holds $\bigcup_{p\in V}F_x(p,R)\cap \bigcup_{p\in V'}F_x(p,R)=\emptyset$.
\end{lem}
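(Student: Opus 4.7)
The plan is to argue by contradiction and to exploit the two uniform boundary estimates of Lemma~\ref{Kobest}. Suppose the first conclusion fails, so there exist sequences $R_n\searrow 0$, $p_n\in V$ and $w_n\in F_x(p_n,R_n)$ with $w_n\notin U$. Since $V$ is closed in the compact set $\partial D$ and $\overline{D}$ is itself compact, after passing to a subsequence I may assume $p_n\to p\in V$ and $w_n\to q\in\overline{D}\setminus U$. In particular $q\notin V$, so $q\neq p$ whenever $q\in\partial D$. Unwinding the $\liminf$ defining $F_x(p_n,R_n)$ by a diagonal selection, I pick $\zeta_n\in D$ with $|\zeta_n-p_n|<1/n$ and $K_D(w_n,\zeta_n)-K_D(x,\zeta_n)<\tfrac{1}{2}\log R_n+\tfrac{1}{n}$. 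Then $\zeta_n\to p$ and the left-hand side tends to $-\infty$, so everything reduces to showing that the quantity $K_D(w_n,\zeta_n)-K_D(x,\zeta_n)$ is bounded below uniformly in $n$.

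If $q\in D$ this is immediate: $\{w_n\}$ lies in a compact subset of $D$, so $K_D(w_n,x)$ is uniformly bounded by some $C$, and the triangle inequality yields $K_D(w_n,\zeta_n)-K_D(x,\zeta_n)\geq -K_D(w_n,x)\geq -C$. If instead $q\in\partial D$ then $q\neq p$, and I intend to cancel the $-\tfrac{1}{2}\log\delta(\zeta_n)$ blow-ups coming from the two estimates of Lemma~\ref{Kobest}. Fix an auxiliary point $y\in D$ close to $p$, with $\delta(y)$ bounded away from zero; for $n$ large both $y$ and $\zeta_n$ lie in $B(p_n,\varepsilon_0)\cap D$, so \eqref{estimateabove} applied to $y,\zeta_n$ together with the triangle inequality gives
\[
K_D(x,\zeta_n)\leq K_D(x,y)+K_D(y,\zeta_n)\leq C_1-\tfrac{1}{2}\log\delta(\zeta_n).
\]
On the other hand, because $|w_n-p_n|\to |q-p|>0$, for $n$ large $w_n$ sits outside a fixed small ball around $p_n$, and \eqref{estimatebelow} yields $K_D(w_n,\zeta_n)\geq C_2-\tfrac{1}{2}\log\delta(\zeta_n)$. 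Subtracting produces $K_D(w_n,\zeta_n)-K_D(x,\zeta_n)\geq C_2-C_1$, a uniform lower bound, which contradicts the convergence to $-\infty$.

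The step that requires the most care is verifying the hypotheses of \eqref{estimatebelow} in Case~2, since Lemma~\ref{Kobest} produces a fixed constant $\varepsilon_1$ and a priori $|q-p|$ could be smaller than $\varepsilon_1$. This is not really an obstruction: \eqref{estimatebelow} is local at the boundary point, so it holds with $\varepsilon_1$ replaced by any smaller positive number at the cost of a worse constant $C'$; taking $\varepsilon_1<|q-p|/3$ suffices. The ``in particular'' clause is then a quick corollary: $\partial D$ is compact Hausdorff, hence normal, so the disjoint closed sets $V$ and $V'$ can be separated by disjoint open neighborhoods $U\supset V$ and $U'\supset V'$, and applying the first part to each pair gives a common $R_0$.
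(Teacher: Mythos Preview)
Your argument is correct and follows the same overall strategy as the paper: assume the conclusion fails, extract convergent subsequences, and reach a contradiction via the boundary estimates for $K_D$. The organization differs slightly. The paper first proves the auxiliary structural fact
\[
\overline{F_x(V,R)}\cap\partial D = V
\]
(using the two-sided lower bound $K_D(z,w)\ge -\tfrac12\log\delta(z)-\tfrac12\log\delta(w)+K$ for $z,w$ near distinct boundary points, cited from \cite[Cor.~2.3.55]{Aba}), and then deduces the inclusion $F_x(V,R)\subset U$ by a short contradiction. You instead go directly to the contradiction and use only the one-sided estimates \eqref{estimateabove} and \eqref{estimatebelow} already recorded in Lemma~\ref{Kobest}, at the price of the remark that the constants $\varepsilon_1>\varepsilon_0$ in \eqref{estimatebelow} may be shrunk (which is true, and standard from the peak-function proof, but not literally stated in Lemma~\ref{Kobest}). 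Your route is slightly more self-contained relative to the paper's stated lemmas; the paper's route yields the reusable identity $\overline{F_x(V,R)}\cap\partial D=V$ along the way. Both the case split $q\in D$ versus $q\in\partial D$ and the normality argument for the ``in particular'' clause are fine.
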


\begin{proof}  Let $R>0$. First, note that
\begin{equation*}
F_x(V, R):=\{z\in D: \liminf_{w\to V}[K_D(z,w)-K_D(x,w)]<\frac{1}{2}\log R\}=\bigcup_{p\in V}F_x(p, R).
\end{equation*}
Indeed, if $z\in F_x(V, R)$, then there exists a sequence $\{w_m\}$ such that 
\[
\lim_{m\to \infty}[K_D(z,w_m)-K_D(x,w_m)]=\liminf_{w\to V}[K_D(z,w)-K_D(x,w)].
\]
Up to subsequences, we can assume that $\{w_m\}$ converges to $p\in V$. Therefore,
\[
\liminf_{w\to p}[K_D(z,w)-K_D(x,w)]\leq \lim_{m\to \infty}[K_D(z,w_m)-K_D(x,w_m)]<\frac{1}{2}\log R,
\]
hence, $z\in F_x(p, R)$. Conversely, if $z\in F_x(p, R)$ then 
\[
\liminf_{w\to V}[K_D(z,w)-K_D(x,w)]\leq \liminf_{w\to p}[K_D(z,w)-K_D(x,w)]<\frac{1}{2}\log R,
\]
proving that $z\in F_x(V,R)$.

Next, we show 
\begin{equation}\label{Eq:intersFV}
\overline{F_x(V, R)}\cap \partial D=V.
\end{equation}
The proof is similar to the proof of Theorem \ref{intersF}.(4) (see, \cite{ABZ, Aba}).  Since $F_x(V, R)=\bigcup_{p\in V}F_x(p, R)$, it follows $V\subseteq \overline{F_x(V, R)}$ by Theorem \ref{intersF}.(4). In order to show the converse inclusion, assume by contradiction there exists $q\in (\overline{F_x(V, R)}\cap \partial D)\setminus V$. Hence, there exists a sequence $\{z_n\}\subset F_x(V, R)$ converging to $q$. By definition, this implies that for every $n\in \N$ there exists a sequence $\{w_{nm}\}_{m\in \N}$ converging to some point $p_n\in V$ such that $[K_D(z_n, w_{nm})-K_D(x, w_{nm})]<\frac{1}{2}\log R$ for every $m\in \N$. Since $V$ is compact, we can assume $\{p_n\}$ is converging to some $p\in V$. Hence, given $\delta>0$, we can also assume that $|z_n-q|<\delta$ and $|w_{nm}-p|<\delta$ for all $n,m$. We can choose $\delta$ so small that $B(q,2\delta)\cap B(p, 2 \delta)=\emptyset$, where $B(q,2\delta):=\{w\in \C^n: |w-q|<2\delta\}$. Hence (see \cite[Cor. 2.3.55]{Aba} or \cite[Thm. 1.6]{ABZ}) there exists $K\in \R$ such that for all $n, m$,
\[
K_D(z_n, w_{nm})\geq -\frac{1}{2}\log \delta(z_n)-\frac{1}{2}\log \delta(w_{nm})+K.
\]
Therefore, by \eqref{estimatebelow},
\begin{equation*}
\begin{split}
\frac{1}{2}\log R&>K_D(z_n, w_{nm})-K_D(x, w_{nm})\geq  -\frac{1}{2}\log \delta(z_n)-\frac{1}{2}\log \delta(w_{nm})+K-K_D(x, w_{nm})\\&\geq  -\frac{1}{2}\log \delta(z_n)-\frac{1}{2}\log \delta(w_{nm})+K-C'+\frac{1}{2}\log \delta(w_{nm})=-\frac{1}{2}\log \delta(z_n)+K-C',
\end{split}
\end{equation*}
a contradiction, and \eqref{Eq:intersFV} holds.

Now, in order to complete the proof, we argue by contradiction. Let assume there exists an open neighborhood $U$ of $V$ such that, for all $n\in \N$ there exists $z_n\in F_x(V,\frac{1}{n})\cap (D\setminus U)$. Up to subsequence, we can assume that $z_n\to z_0\in \overline{D\setminus U}$.  
Arguing as in the proof of Proposition \ref{outsideK}, it is not hard to see that $z_0\not\in D$. Therefore, $z_0\in \partial D$. Fix $R>0$. Since $F_x(V,R)\subset F_x(V,R')$ for all $0<R<R'$, it follows that $z_n\in F_x(V,R)$ for all $n>1/R$. In particular, $z_0\in \overline{F_x(V,R)}\cap \partial D=V$ by \eqref{Eq:intersFV}. Again, a contradiction.
\end{proof}

We prove the following optimal localization principle for the Kobayashi distance on strongly pseudoconvex domains. An associated result was given by Z. Balongh and M. Bonk in \cite{BB}. Our more precise statement relies on their approach. After a first draft of our paper was completed, N. Nikolov informed us that one can  obtain a similar estimate using the techniques he developed in \cite{N}.

\begin{lem}\label{localizationL}
Let $D\subset \C^N$ be a bounded strongly pseudoconvex domain with $C^2$ boundary. Let $p\in \partial D$. Let $U$ be an open neighborhood of $p$. Then there exists an open neighborhood $W\subset U$ of $p$ and a constant $T\geq 1$ such that for every $z,w\in W\cap D$ it holds
\begin{equation}\label{localization}
K_{U\cap D}(w,z)-K_D(w,z)\leq \frac{1}{2}\log T.
\end{equation} 
\end{lem}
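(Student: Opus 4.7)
The plan is to sandwich both $K_{U\cap D}(w,z)$ and $K_D(w,z)$ between the same explicit Euclidean-geometric quantity, up to additive constants, so that the difference is automatically bounded. First I would shrink $U$ to an open neighborhood $W$ of $p$ with $\overline{W}\subset U$ chosen so that $\partial(U\cap D)\cap \overline{W}\subset \partial D$; then $\delta_{U\cap D}(z)=\delta_D(z)=:\delta(z)$ for every $z\in W\cap D$, and $U\cap D$ is a bounded $C^2$ domain having $p$ as a boundary point.

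Next, I would apply the upper estimate \eqref{estimateabove} of Lemma \ref{Kobest} to $U\cap D$ at the boundary point $p$: shrinking $W$ inside $B(p,\varepsilon_0)$ if needed, there is $C_1>0$ such that for all $w,z\in W\cap D$,
\[
K_{U\cap D}(w,z)\leq \frac{1}{2}\log\left(1+\frac{|w-z|}{\delta(w)}\right)+\frac{1}{2}\log\left(1+\frac{|w-z|}{\delta(z)}\right)+C_1.
\]
The substantive step is to prove a matching lower bound for $K_D$: there exists $C_2>0$ such that, for all $w,z$ in a (possibly smaller) neighborhood of $p$,
\[
K_D(w,z)\geq \frac{1}{2}\log\left(1+\frac{|w-z|}{\delta(w)}\right)+\frac{1}{2}\log\left(1+\frac{|w-z|}{\delta(z)}\right)-C_2.
\]
Subtracting yields $K_{U\cap D}(w,z)-K_D(w,z)\leq C_1+C_2$ on $W\cap D$, and setting $T:=\exp(2(C_1+C_2))$ gives the claim.

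Producing this lower bound is the main obstacle and is the only place where strong pseudoconvexity is used essentially. I would follow the Balogh--Bonk approach: using a local defining function $\rho$ of $D$ and the strict plurisubharmonicity of $-\log(-\rho)$ near $p$, construct a Sibony-type plurisubharmonic function that yields an infinitesimal lower bound on the Kobayashi--Royden pseudometric $k_D$ controlling both the normal and the complex-tangential decay, and then integrate it along Kobayashi (quasi-)geodesics joining $w$ and $z$. The delicate point is that the lower bound must have the correct \emph{additive} form even when the Euclidean gap $|w-z|$ dominates $\sqrt{\delta(z)\delta(w)}$; a merely multiplicative inequality $K_{U\cap D}\leq (1+\varepsilon)K_D+O(1)$ would not suffice, since $K_D(w,z)\to\infty$ as $w,z\to p$.
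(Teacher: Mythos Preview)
Your strategy is sound and genuinely different from the paper's. Both arguments rest on Balogh--Bonk, but the paper applies their two--sided estimate $|K-g_D|\le C$ to \emph{both} $D$ and $U\cap D$ (after first shrinking $U$ so that $U\cap D$ is itself strongly pseudoconvex with $C^2$ boundary), and then spends most of the proof showing that the Carnot--Carath\'eodory boundary distances $d_H^{\partial D}$ and $d_H^{\partial(U\cap D)}$ coincide on a small neighborhood of $p$, so that $g_D=g_{U\cap D}$ there. You instead use the Forstneri\v{c}--Rosay upper bound \eqref{estimateabove} (which involves only $|w-z|$ and $\delta$) for $K_{U\cap D}$, and a matching lower bound for $K_D$; this eliminates the Carnot--Carath\'eodory \emph{equality} step entirely. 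Your lower bound is weaker than the full Balogh--Bonk estimate and follows from it via the box--ball inequality $|x-y|\lesssim d_H^{\partial D}(x,y)$: since $|w-z|\le \delta(w)+\delta(z)+|\pi(w)-\pi(z)|\lesssim \max(\sqrt{\delta(w)},\sqrt{\delta(z)})+d_H^{\partial D}(\pi(w),\pi(z))$, one gets $(\delta(w)+|w-z|)(\delta(z)+|w-z|)\lesssim (d_H^{\partial D}+\max\sqrt{\delta})^2$, i.e.\ $h(w,z)\le g_D(w,z)+C'$, whence $K_D\ge g_D-C_D\ge h-C_2$. So your ``substantive step'' can simply be a citation of \cite{BB} plus this elementary comparison, rather than a re-derivation via Sibony functions.

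One point to tighten: as written, $U\cap D$ need not be a bounded $C^2$ domain (you have no control on $\partial U$), so Lemma~\ref{Kobest} does not apply to it directly. You should first replace $U$ by a smaller $\tilde U\subset U$ with $\tilde U\cap D$ a $C^2$ domain, exactly as the paper does in its opening paragraph; since $K_{U\cap D}\le K_{\tilde U\cap D}$, the bound for $\tilde U$ implies the one for $U$.
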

\begin{proof}
First of all, notice that, given any open set $U$ of $p$, there exists an open neighborhood $\tilde{U}\subset U$ of $p$ such that $\tilde{U}\cap D$ is a strongly pseudoconvex domain with $C^2$ boundary. If \eqref{localization}  holds for $\tilde{U}$, then for every $z,w\in W\cap D$ 
\[
K_{U\cap D}(w,z)-K_D(w,z)\leq K_{\tilde{U}\cap D}(w,z)-K_D(w,z)\leq \frac{1}{2}\log T,
\]
and hence \eqref{localization}ÃÂ  holds for $U$ as well.
 
Therefore, without loss of generality, we can assume  that $U\cap D$ is a strongly pseudoconvex domain with $C^2$ boundary.

For $x,y\in \partial D$, let $d^{\partial D}_H(x,y)$ denote the Carnot-Carath\'eodory distance between $x$ and $y$. The distance $d^{\partial D}_H(x,y)$ is defined as follows (see, {\sl e.g.}, \cite{BB}). A piecewise $C^1$-smooth curve  $\alpha:[0,1]\to  \partial D$ is {\sl horizontal} provided $\alpha'(t)\in T^{\C}_{\alpha(t)}\partial D$ for almost every $t$. The set of horizontal curves is denoted by $\mathcal H(\partial D)$. For every $x,y\in \partial D$ there exist horizontal curves joining $x$ and $y$. Let 
\[
\rho_D(z):=\begin{cases} -\delta(z)& \quad \hbox{for } z\in D\\
\delta(z)& \quad \hbox{for } z\in \C^N\setminus D
\end{cases}
\]
Then $\rho_D$ is $C^2$ on an open neighborhood of $\partial D$. Let $L_{\rho_D}$ be the Levi form of $\rho_D$. The Levi-length of a horizontal curve $\alpha$ is defined as
\[
\ell^D_L(\alpha):=\int_0^1 (L_{\rho_D}(\alpha(t); \alpha'(t)))^{1/2}dt.
\]
Then 
\[
d^{\partial D}_H(x,y):=\inf \{ \ell^D_L(\alpha): \alpha\in \mathcal H(\partial D), \alpha(0)=x, \alpha(1)=y\}.
\]
Let $W'$ be an open neighborhood of $p\in \partial D$ such that for every $z\in W'\cap D$ there exists a unique point $\pi_D(z)\in \partial D$ such that 
\[
\delta(z)=|z-\pi_D(z)|.
\]
For $z,w\in W'\cap D$ define
\[
g_D(z,w):=2\log \left[\frac{d_H^{\partial D}(\pi_D(z),\pi_D(w))+ \max\{\sqrt{\delta(z)}, \sqrt{\delta(w)}\}}{\sqrt{\delta(z)\delta(w)}}\right].   
\]
By \cite[Corollary 1.3]{BB} there exists $C_D\geq 0$ such that for all $z,w\in W'\cap D$ it holds
\begin{equation}\label{bb1}
g_D(z,w)-C_D\leq K_D(z,w)\leq g_D(z,w)+C_D.
\end{equation}
Similarly, there exists $C_{U\cap D}\geq 0$ such that for all $z,w\in W'\cap D$ it holds
\begin{equation}\label{bb2}
g_{U\cap D}(z,w)-C_{U\cap D}\leq K_{U\cap D}(z,w)\leq g_{U\cap D}(z,w)+C_{U\cap D}.
\end{equation}
By \eqref{bb1} and \eqref{bb2}, for all $z,w\in W'\cap D$ we have
\begin{equation}\label{Kandg}
K_{U\cap D}(w,z)-K_D(w,z)\leq g_{U\cap D}(z,w)-g_D(z,w)+C_D+C_{U\cap D}.
\end{equation}
Now, up to shrinking $W'$ is necessary, we can assume that for all $z\in W'\cap D$ it holds
\[
\delta^{U\cap D}(z)=\delta(z),
\]
where $\delta^{U\cap D}(z)$ denotes the distance from $z$ to the boundary of $U\cap D$.
With such an assumption, $\pi(z):=\pi_D(z)=\pi_{U\cap D}(z)$. We claim that there exists a open neighborhood $W\subseteq W'$ such that for all $z,w\in W\cap D$ it holds
\begin{equation}\label{carnot}
d_H^{\partial D}(\pi(z),\pi(w))=d_H^{\partial (U\cap D)}(\pi(z),\pi(w)).
\end{equation}
Once this is proved, it follows from \eqref{Kandg} that for all $z,w\in W\cap D$
\[
K_{U\cap D}(w,z)-K_D(w,z)\leq \frac{1}{2}\log T:=C_D+C_{U\cap D},
\]
and the lemma is proved.

In order to prove \eqref{carnot}, we argue as follows. By the ``box-ball estimate'' (see \cite[eq. (3.1)]{BB}) there exist $A_1, A_2>0$ such that
\[
A_1|x-y|\leq d_H^{\partial D}(x,y)\leq A_2|x-y|^{1/2},
\]
for all $x,y\in \partial D$. In particular this implies that there exists   an open neighborhood $W_1\subset W'$ of $p$ such that for every $x,y\in W_1\cap \partial D$ and for every  horizontal curve $\alpha$ joining $x$ and $y$ such that $\alpha([0,1])\cap (\partial D\setminus W')\neq\emptyset$ it holds
\begin{equation}\label{out1}
\ell^D_L(\alpha)\geq d^{\partial D}_H(x,y)+c_1,
\end{equation}
for some $c_1>0$. Similarly, that there exists   an open neighborhood $W_2\subset W'$ of $p$ such that for every $x,y\in W_2\cap \partial D$ and for every  horizontal curve $\alpha$ joining $x$ and $y$ such that $\alpha([0,1])\cap (\partial (U\cap D)\setminus W')\neq\emptyset$ it holds
\begin{equation}\label{out2}
\ell^{U\cap D}_L(\alpha)\geq d^{\partial {(U\cap D)}}_H(x,y)+c_2,
\end{equation}
for some $c_2>0$. Let $W\subset W_1\cap W_2$ be an open neighborhood of $p$ such that $\pi(W\cap D)\subset\subset (W_1\cap W_2)\cap \partial D$. Let $z,w\in W\cap  D$. Let $0<\epsilon<\min\{c_1,c_2\}$. Let $\alpha$ be a horizontal curve joining $\pi(z)$ and $\pi(w)$ such that 
\[
\ell^D_L(\alpha)\leq d^{\partial D}_H(\pi(z),\pi(w))+\epsilon.
\]
 By \eqref{out1}, $\alpha([0,1])\subset W'$. Since $\rho_D(z)=\rho_{U\cap D}(z)$ for all $z\in W'\cap D$, it follows that $\ell^{U\cap D}_L(\alpha)=\ell^D_L(\alpha)$. Hence 
\[
d_H^{\partial (U\cap D)}(\pi(z),\pi(w))\leq \ell^{U\cap D}_L(\alpha)=\ell^D_L(\alpha)\leq d^{\partial D}_H(\pi(z),\pi(w))+\epsilon,
\]
and by the arbitrariness of $\epsilon$, it follows that $d_H^{\partial (U\cap D)}(\pi(z),\pi(w))\leq d^{\partial D}_H(\pi(z),\pi(w))$. A similar argument gives the opposite inequality, and the proof is completed.
\end{proof}

\begin{defi}
Let $D\subset \C^N$ be a domain. A {\sl cone region} $\mathcal C(p,\alpha, \epsilon)$ in $D$ of vertex $p\in \partial D$, aperture $\alpha>1$ and size $\epsilon>0$ is 
 \[
 \mathcal C(p,\alpha, \epsilon):=\{z\in D: |z-p|<\min\{\alpha \delta(z), \epsilon\}\}.
 \]
\end{defi}

\begin{prop}\label{pointstrict}
Let $D\subset \C^N$ be a bounded strongly pseudoconvex domain with $C^2$ boundary, $x\in D$. Let $\alpha>1$ and $R>0$. Then there exists $\varepsilon>0$ such that for every $p\in \partial D$ and for every sequence $\{u_n\}$ converging to $p$ it holds \begin{equation}\label{coneeq}
 \mathcal C(p,\alpha, \varepsilon)\subset E_x(\{u_n\}, R).
\end{equation}
 In particular, every sequence in $D$ converging to a boundary point is admissible.
\end{prop}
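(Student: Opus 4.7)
The plan is to directly compare the upper and lower Kobayashi boundary estimates from Lemma \ref{Kobest}. The key observation is that the singular term $\tfrac{1}{2}\log\delta(u_n)$, which blows up as $u_n\to p$, appears in the upper bound for $K_D(w,u_n)$ and in the lower bound for $K_D(x,u_n)$ with opposite signs; it therefore cancels in the difference $K_D(w,u_n)-K_D(x,u_n)$, which then remains bounded as $u_n\to p$ by an expression controlled by the geometric parameters $|w-p|/\delta(w)$ and $|w-p|$---exactly the quantities bounded by the cone condition. The constants in Lemma \ref{Kobest} are uniform in $p\in\partial D$, and since $x$ is fixed with $\delta(x)>0$ we may shrink $\varepsilon_1$ below $\delta(x)$ so that $x\notin B(p,\varepsilon_1)$ for every $p\in\partial D$, making the lower estimate available uniformly in $p$.

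Concretely, I fix $\alpha>1$, $R>0$, and $\varepsilon\in(0,\varepsilon_0)$ to be determined. Let $p\in\partial D$, let $\{u_n\}\subset D$ converge to $p$, and take $w\in \mathcal C(p,\alpha,\varepsilon)$. For $n$ large both $w$ and $u_n$ lie in $D\cap B(p,\varepsilon_0)$, so applying \eqref{estimateabove} to the pair $(w,u_n)$ and \eqref{estimatebelow} to $u_n$ and then subtracting, the combination $\tfrac{1}{2}\log(1+|w-u_n|/\delta(u_n))+\tfrac{1}{2}\log\delta(u_n)$ collapses to $\tfrac{1}{2}\log(\delta(u_n)+|w-u_n|)$, yielding
\[
K_D(w,u_n)-K_D(x,u_n)\leq \tfrac{1}{2}\log\!\Bigl(1+\tfrac{|w-u_n|}{\delta(w)}\Bigr)+\tfrac{1}{2}\log\bigl(\delta(u_n)+|w-u_n|\bigr)+(C-C').
\]
Passing to $\limsup_{n\to\infty}$, using $|w-u_n|\to|w-p|$ and $\delta(u_n)\to 0$, and then invoking the cone bounds $|w-p|/\delta(w)<\alpha$ and $|w-p|<\varepsilon$, I obtain
\[
\limsup_{n\to\infty}\bigl[K_D(w,u_n)-K_D(x,u_n)\bigr]\leq \tfrac{1}{2}\log(1+\alpha)+\tfrac{1}{2}\log\varepsilon+(C-C').
\]

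It then suffices to choose $\varepsilon\in(0,\varepsilon_0)$ so small that $(1+\alpha)\varepsilon\cdot e^{2(C-C')}<R$, which is possible since $\alpha, R, C, C', \varepsilon_0$ are all fixed. For such $\varepsilon$ the right-hand side above is strictly less than $\tfrac{1}{2}\log R$, so $w\in E_x(\{u_n\},R)$ and \eqref{coneeq} follows. For the ``in particular'' statement, $\mathcal C(p,\alpha,\varepsilon)$ is nonempty: moving along the inward normal to $\partial D$ at $p$, a point $w_t$ at Euclidean distance $t$ from $p$ satisfies $\delta(w_t)/t\to 1$ as $t\to 0^+$, so eventually $|w_t-p|<\alpha\,\delta(w_t)$ and $|w_t-p|<\varepsilon$. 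Hence $E_x(\{u_n\},R)\neq\emptyset$ for every $R>0$, while $\lim K_D(x,u_n)=+\infty$ follows from $u_n\to p\in\partial D$ together with Kobayashi completeness of $D$, so $\{u_n\}$ is admissible. The only genuine subtlety I expect is spotting the cancellation of the singular $\tfrac{1}{2}\log\delta(u_n)$ term and the uniformity of the constants across $p\in\partial D$; once those are in hand, the remaining work is routine estimation.
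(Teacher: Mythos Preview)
Your proof is correct and follows essentially the same approach as the paper: combine the upper estimate \eqref{estimateabove} for $K_D(w,u_n)$ with the lower estimate \eqref{estimatebelow} for $K_D(x,u_n)$, observe the cancellation of the singular $\tfrac{1}{2}\log\delta(u_n)$ term, take the limsup, and use the cone conditions to bound the remaining terms by $\tfrac{1}{2}\log(1+\alpha)+\tfrac{1}{2}\log\varepsilon+(C-C')$. Your write-up is in fact slightly more explicit than the paper's on two points: you spell out why the choice of $\varepsilon_1$ can be made uniform in $p$ (by taking $\varepsilon_1<\delta(x)$), and you verify that the cone $\mathcal C(p,\alpha,\varepsilon)$ is nonempty.
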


\begin{proof}
Let $\{u_n\}\subset D$ be a sequence converging to $p \in \partial D$.  

Let $\varepsilon_1>\varepsilon_0>0$ be given by  \eqref{estimateabove} and \eqref{estimatebelow} and  such that  $x \not\in D \cap B(p,\varepsilon_1)$. Then for $w \in D \cap B(p,\varepsilon_0)$ and $n$ large enough, we have:

\begin{equation}\label{kob-equation}
K_D(w,u_n) - K_D(x,u_n)\leq \frac{1}{2}\log\left(1+\frac{|u_n-w|}{\delta(w)}\right) + \frac{1}{2}\log (\delta(u_n) + |u_n-w|) + C - C'.
\end{equation}
Therefore,
\begin{equation}\label{limkob}
\limsup_{n\to \infty} [K_D(w,u_n) - K_D(x,u_n)] \leq C-C'+\frac{1}{2}\log \left(1+\frac{|p-w|}{\delta(w)} \right)+\frac{1}{2}\log|p-w|.
\end{equation}

Now,  fix $\alpha > 1$. Given $\varepsilon\in (0, \varepsilon_0]$, for every $w\in   \mathcal C(p,\alpha, \varepsilon)$ it holds
\[
\limsup_{n\to \infty} [K_D(w,u_n) - K_D(x,u_n)] \leq C-C'+\frac{1}{2}\log \left(1+\alpha \right)+\frac{1}{2}\log \varepsilon.
\]
Hence, given $R>0$, there exists $\varepsilon\in (0,\varepsilon_0]$  (depending on $R$ and $\alpha$ but not on $p$ and $\{u_n\}$), such that \eqref{coneeq} holds. 

In particular,  $E_x(\{u_n\},R)\neq \emptyset$ for all $R>0$.   Since $D$ is complete hyperbolic, it follows that $\lim\inf_{n \rightarrow \infty}K_M(x,u_n) = \infty$. Hence $\{u_n\}$ is admissible.
\end{proof}





For strongly pseudoconvex domains we can say more about equivalent admissible sequence:

\begin{prop}\label{stronglypseudo}
Let $D\subset \C^N$ be a bounded strongly pseudoconvex domain with $C^3$ boundary. If $\{u_n\}, \{v_n\}\subset D$ are two sequences converging to $p$ then they are admissible and $\{u_n\} \sim \{v_n\}$. 
\end{prop}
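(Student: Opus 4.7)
The admissibility of both $\{u_n\}$ and $\{v_n\}$ is immediate from Proposition \ref{pointstrict}. For the equivalence, my plan is to first reduce the question to a comparison between Abate's small and big horospheres at $p$, and then handle that comparison by a localization argument.

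The reduction step is easy: for every sequence $\{w_n\} \subset D$ with $w_n \to p$ and every $R > 0$, the definitions together with $\liminf \leq \limsup$ give the sandwich
$$
E_x(p, R) \subseteq E_x(\{w_n\}, R) \subseteq F_x(p, R).
$$
Hence, to obtain $\{u_n\} \sim \{v_n\}$, it suffices to prove the following claim: for every $R > 0$ there exists $R' > 0$ such that $F_x(p, R') \subseteq E_x(p, R)$. Granting this, $E_x(\{u_n\}, R') \subseteq F_x(p, R') \subseteq E_x(p, R) \subseteq E_x(\{v_n\}, R)$, and symmetrically.

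To prove the claim, I would localize near $p$ and exploit the equality $E = F$ available for strongly convex $C^3$ domains (Theorem \ref{intersF}). By Fornaess's classical convexification theorem for strongly pseudoconvex $C^3$ domains, there is an open neighborhood $V$ of $p$ and a biholomorphism $\Phi$, regular up to the boundary, sending $V \cap D$ to a strongly convex $C^3$ domain. By Theorem \ref{intersF} and biholomorphic invariance of the Kobayashi distance one gets $E^{V \cap D}_x(p, R) = F^{V \cap D}_x(p, R)$ for every $R > 0$. The localization principle (Lemma \ref{localizationL}) then provides a smaller neighborhood $W \subseteq V$ of $p$ and $T \geq 1$ with $K_{V \cap D}(z, w) \leq K_D(z, w) + \frac{1}{2}\log T$ for $z, w \in W \cap D$, the reverse inequality being automatic; after a harmless base-point change via Lemma \ref{changebasept} we may assume $x \in W \cap D$. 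Taking $\liminf$ and $\limsup$ as $z \to p$ in these pointwise bounds yields the containments
$$
F_x(p, R) \cap W \subseteq F^{V \cap D}_x(p, RT), \qquad E^{V \cap D}_x(p, R) \subseteq E_x(p, RT).
$$

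Finally, Lemma \ref{squeeze} furnishes $R_0 > 0$ such that $F_x(p, R'') \subset W \cap D$ whenever $R'' \leq R_0$. Given $R > 0$, setting $R' := \min\{R_0, R/T^2\}$ and chaining the previous inclusions gives
$$
F_x(p, R') \subseteq F^{V \cap D}_x(p, R'T) = E^{V \cap D}_x(p, R'T) \subseteq E_x(p, R'T^2) \subseteq E_x(p, R),
$$
which is the claim. The main delicate point I foresee is ensuring that Fornaess's local convexification interacts cleanly with the Kobayashi geometry at $p$—specifically, that the convexified image has $\Phi(p)$ on its $C^3$ strongly convex boundary, so that the identity $E^{V \cap D}_x(p, R) = F^{V \cap D}_x(p, R)$ at the vertex $p$ actually transfers back from the convex model to $V \cap D$.
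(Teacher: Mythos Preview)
Your argument is correct and uses the same ingredients as the paper's proof: the local convexification of $D$ near $p$, the localization principle for $K_D$ (Lemma~\ref{localizationL}), the equality $E=F$ in strongly convex $C^3$ domains (Theorem~\ref{intersF}), and Lemma~\ref{squeeze} to confine small horospheres to the localizing neighborhood. The only real difference is organizational. The paper compares the sequence horospheres $E_x(\{u_n\},R)$ and $E_x(\{v_n\},R)$ directly with the local convex-model horospheres $E_x^{U\cap D}(p,r)$, proving two inclusion claims (\ref{include1}) and (\ref{include2}) and then chaining them. You instead first sandwich every sequence horosphere between Abate's small and big horospheres, $E_x(p,R)\subseteq E_x(\{w_n\},R)\subseteq F_x(p,R)$, and then reduce the whole proposition to the single clean statement $F_x(p,R')\subseteq E_x(p,R)$, which you establish by the same localization/convexification argument. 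Your reformulation is slightly more transparent because it isolates a statement purely about Abate's horospheres, independent of the particular sequences; the paper's version keeps the sequences in play but is otherwise the same computation. The delicate point you flag---that the Fornaess convexification carries $p$ to a $C^3$ strongly convex boundary point so that Theorem~\ref{intersF} transfers back to $V\cap D$---is exactly the assumption the paper also uses implicitly when it writes ``choose $U$ such that $U\cap D$ is biholomorphic to a strongly convex domain with $C^3$ boundary.''
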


\begin{proof}
The two sequences are admissible by Proposition \ref{pointstrict}.

Fix $R>0$. We want to show that there exist $R', R''>0$ such that $E_x(\{u_n\}, R')\subset E_x(\{v_n\}, R)$ and $E_x(\{v_n\}, R'')\subset E_x(\{u_n\}, R)$. We are going to prove the existence of $R'$, a similar argument holds for $R''$.

We claim that for every open neighborhood $W$ of $p$, there exists $r_0>0$ such that 
\begin{equation}\label{include}
\overline{E_x(\{u_n\}, r)}\subset (W\cap D)\cup\{p\},
\end{equation} 
for all $0<r<r_0$ (and similarly for $\{v_n\}$). Indeed, for every $r>0$, $E_x(\{u_n\}, r)\subset F_x(p,r)$, where $F_x(p,r)$ denotes  Abate's big horosphere.  Hence, the claim follows  from Lemma \ref{squeeze}. 

Now choose $U$ to be an open neighborhood of $p$ such that $U\cap D$ is biholomorphic to a strongly convex domain with $C^3$ boundary. Let $W\subset U$ be the open neighborhood of $p$ given by Lemma \ref{localizationL}.

Since the equivalence relation among admissible sequences is independent of the base point, we can assume with no loss of generality that $x\in W\cap D$. Moreover, since both $\{u_n\}$ and $\{v_n\}$ are eventually contained in $W$, we can also assume that $\{u_n\}, \{v_n\}\subset W\cap D$. 

For $r>0$, let $E^{U\cap D}_x(p,r)$ denote the small horosphere of vertex $p$ and radius $r$ in $U\cap D$. Since clearly $E^{U\cap D}_x(p,r)\subset E^{U\cap D}_x(\{u_n\}, r)\subset F^{U\cap D}_x(p,r)$ and by Theorem \ref{intersF}, $E^{U\cap D}_x(p,r)=F^{U\cap D}_x(p,r)$, it follows that for every $r>0$
\begin{equation}\label{equahoro}
E^{U\cap D}_x(p,r)=E^{U\cap D}_x(\{u_n\},r)=E^{U\cap D}_x(\{v_n\},r).
\end{equation}

Now we claim that for every $r>0$ such that $E_x(\{v_n\},r)\subset W$ there exists $r'>0$ such that 
\begin{equation}\label{include1}
E^{U\cap D}_x(p,r')\subset E_x(\{v_n\},r).
\end{equation}
Conversely,  for every $r>0$ such that $E^{U\cap D}_x(p,r)\subset W$ there exists $r'>0$ such that 
\begin{equation}\label{include2}
E_x(\{u_n\},r')\subset E^{U\cap D}_x(p,r).
\end{equation}
Assuming the claims, the proof ends as follows. By \eqref{include}, there exists $r>0$ such that $E_x(\{v_n\}, r)\subset E_x(\{v_n\}, R)\cap W$. By \eqref{include1} there exists $r'>0$ such that $E^{U\cap D}_x(p,r')\subset E_x(\{v_n\},r)$, and by \eqref{include2} there exists $R'>0$ such that $E_x(\{u_n\},R')\subset E^{U\cap D}_x(p,r')$. Hence, $E_x(\{u_n\},R')\subset E_x(\{v_n\}, R)$ as needed. 

In order to prove \eqref{include1},  let $r>0$ be such that $E_x(\{u_n\},r)\subset W$. Let $T\geq 1$ be given by Lemma \ref{localizationL}.  Let $r_T\in (0,r]$ be such that $E^{U\cap D}_x(p,T^{-1}r_T)\subset W$. By \eqref{equahoro}, $E^{U\cap D}_x(p,T^{-1}r_T)=E^{U\cap D}_x(\{v_n\},T^{-1}r_T)$. Hence, for every $z\in E^{U\cap D}_x(p,T^{-1}r_T)$ it holds
\begin{equation*}
\begin{split}
&\limsup_{n\to \infty} [K_D(z,v_n)-K_D(x,v_n)]\leq \limsup_{n\to \infty} [K_{U\cap D}(z,v_n)-K_{D}(x,v_n)]\\ &\leq \limsup_{n\to \infty} [K_{U\cap D}(z,v_n)-K_{U\cap D}(x,v_n)]+\limsup_{n\to \infty}[K_{U\cap D}(x,v_n)-K_D(x,v_n)]\\ &\stackrel{\eqref{localization}}{\leq} \frac{1}{2}\log (T^{-1}r_T)+\frac{1}{2}\log T=\frac{1}{2}\log r_T,
\end{split}
\end{equation*}
which shows that $z\in E_x(\{v_n\},r_T)\subseteq E_x(\{v_n\},r)$. Thus, $E^{U\cap D}_x(p,T^{-1}r_T)\subset E_x(\{v_n\},r)$ and \eqref{include1} is proved with $r'=T^{-1}r_T$.

In order to prove \eqref{include2},  let $r>0$ be such that  $E^{U\cap D}_x(p,r)\subset W$. Let $T\geq 1$ be given by Lemma \ref{localizationL}.  Let $r_T\in (0,r]$ be such that $E_x(\{u_n\},T^{-1}r_T)\subset W$.  Then for every $z\in E_x(\{u_n\},T^{-1}r_T)$ it holds
\begin{equation*}
\begin{split}
&\limsup_{n\to \infty} [K_{U\cap D}(z,u_n)-K_{D\cap U}(x,u_n)]\leq \limsup_{n\to \infty} [K_{U\cap D}(z,u_n)-K_{D}(x,u_n)]\\ &\leq \limsup_{n\to \infty} [K_{U\cap D}(z,u_n)-K_{D}(z,u_n)]+\limsup_{n\to \infty}[K_D(z,u_n)-K_D(x,u_n)]\\ & \stackrel{\eqref{localization}}{\leq} \frac{1}{2}\log T+\frac{1}{2}\log (T^{-1}r_T)=\frac{1}{2}\log r_T,
\end{split}
\end{equation*}
which, by \eqref{equahoro}, shows that $z\in E^{U\cap D}_x(p,r_T)\subseteq E^{U\cap D}_x(p,r)$. Thus, $E_x(\{u_n\},T^{-1}r_T)\subset E^{U\cap D}_x(p,r)$, and \eqref{include2} is proved with $r'=T^{-1}r_T$.
\end{proof}

A consequence of the previous proposition is given by the following:

\begin{prop}\label{convergenceadmsp}
Let $D\subset \C^N$ be a bounded strongly pseudoconvex domain with $C^3$ boundary. Let $\{u_n\}\subset D$ be an admissible sequence. Then there exists $p\in \partial D$ such that $\lim_{n\to \infty} u_n=p$. Moreover, every admissible sequence $\{v_n\}$ which is equivalent to $\{u_n\}$ converges to $p$. Finally, for $p\in \partial D$, let  $\mathcal A_p$ denote the set of all  sequences in $D$ which converges to $p$. Then, for all $R>0$
\begin{enumerate}
\item $F_x(p,R)=\cup_{\{u_n\}\in \mathcal A_p} E_x(\{u_n\}, R)$;
\item $E_x(p,R)=\cap_{\{u_n\}\in \mathcal A_p} E_x(\{u_n\}, R)\neq \emptyset$.
\end{enumerate}
\end{prop}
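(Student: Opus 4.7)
The plan is to tackle the three assertions in order, using two basic tools throughout: the monotonicity $\limsup_k g(u_{n_k}) \leq \limsup_n g(u_n)$ for subsequences, and the observation that $E_x(\{u_n\},R) \subset F_x(p,R)$ whenever $\{u_n\}\to p$, which is immediate from
\[
\liminf_{w\to p}[K_D(z,w)-K_D(x,w)] \;\leq\; \liminf_n [K_D(z,u_n)-K_D(x,u_n)] \;\leq\; \limsup_n[K_D(z,u_n)-K_D(x,u_n)].
\]

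For (1), since $D$ is bounded, every subsequence of $\{u_n\}$ has a further subsequence that converges in $\overline{D}$; admissibility forces such limits into $\partial D$. Suppose, for contradiction, that two subsequences $\{u_{n_k}\}$ and $\{u_{m_k}\}$ converge to distinct points $p, q\in\partial D$. Subsequence monotonicity yields
\[
E_x(\{u_n\}, R) \;\subset\; E_x(\{u_{n_k}\}, R) \cap E_x(\{u_{m_k}\}, R) \;\subset\; F_x(p, R) \cap F_x(q, R).
\]
By Lemma \ref{squeeze} the right-hand side is empty for $R>0$ small enough, contradicting admissibility. Hence the whole sequence $\{u_n\}$ converges to a unique point $p\in\partial D$.

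For (2), let $\{v_n\}$ be admissible and equivalent to $\{u_n\}$, and let $q\in\partial D$ be the limit of $\{v_n\}$ supplied by (1). By Proposition \ref{pointstrict}, for every $R''>0$ there is a cone region $\mathcal C(q,\alpha,\varepsilon)$ contained in $E_x(\{v_n\},R'')$, so $q\in\overline{E_x(\{v_n\},R'')}$. Fix any $R>0$ and, using the equivalence, choose $R''>0$ with
\[
E_x(\{v_n\},R'')\;\subset\; E_x(\{u_n\},R)\;\subset\; F_x(p,R).
\]
Then $q\in\overline{F_x(p,R)}$, and Theorem \ref{intersF}(4) gives $\overline{F_x(p,R)}\cap\partial D=\{p\}$, forcing $q=p$.

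For (3), the equality $F_x(p,R)=\bigcup_{\{u_n\}\in\mathcal A_p} E_x(\{u_n\},R)$ follows from the inclusion $E_x(\{u_n\},R)\subset F_x(p,R)$ for every $\{u_n\}\to p$, together with the fact that for $z\in F_x(p,R)$ one can pick $\{u_n\}\to p$ along which the $\liminf$ defining $F_x(p,R)$ is realised as a limit strictly less than $\tfrac{1}{2}\log R$, yielding $z\in E_x(\{u_n\},R)$. The equality $E_x(p,R)=\bigcap_{\{u_n\}\in\mathcal A_p} E_x(\{u_n\},R)$ follows from $\limsup_n[K_D(z,u_n)-K_D(x,u_n)]\leq \limsup_{w\to p}[K_D(z,w)-K_D(x,w)]$ (for $\subset$), and from selecting, when $z\notin E_x(p,R)$, a sequence $\{u_n\}\to p$ that realises this latter $\limsup$, which forces $z\notin E_x(\{u_n\},R)$ (for $\supset$). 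Non-emptiness is immediate from Proposition \ref{pointstrict}: the uniform cone region $\mathcal C(p,\alpha,\varepsilon)$ is contained in $E_x(\{u_n\},R)$ for every $\{u_n\}\to p$, and hence in the intersection.

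The main obstacle I anticipate is Step (2), where the definition of equivalence provides no uniform control on the radius $R''$ as a function of $R$; the only way to upgrade the inclusion $E_x(\{v_n\},R'')\subset E_x(\{u_n\},R)$ into information on the limit $q$ is to couple the cone-region inclusion of Proposition \ref{pointstrict} (which pins $q$ to $\overline{E_x(\{v_n\},R'')}$ regardless of how small $R''$ is) with the sharp boundary trace $\overline{F_x(p,R)}\cap\partial D=\{p\}$ from Theorem \ref{intersF}(4).
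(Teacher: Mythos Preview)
Your proof is correct and, for Steps (1) and (3), matches the paper's argument essentially line by line (including the paper's appeal to Proposition~\ref{pointstrict} for non-emptiness; the paper omits the proof of the equality $E_x(p,R)=\cap E_x(\{u_n\},R)$, which you supply correctly).

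Your Step (2) differs from the paper's. The paper argues by contradiction from disjointness: assuming $q\neq p$, Lemma~\ref{squeeze} gives $R_0>0$ with $F_x(p,R)\cap F_x(q,R)=\emptyset$ for $0<R<R_0$; equivalence then yields $R'>0$ with $E_x(\{u_n\},R')\subset E_x(\{v_n\},R)\subset F_x(q,R)$, and a case split on whether $R'\geq R$ or $R'<R$ forces $E_x(\{u_n\},\min(R,R'))$ into the empty intersection $F_x(p,R)\cap F_x(q,R)$, contradicting admissibility. Your route---pin $q$ to $\overline{E_x(\{v_n\},R'')}$ via the cone region, then chase the inclusion into $\overline{F_x(p,R)}$ and invoke $\overline{F_x(p,R)}\cap\partial D=\{p\}$---is shorter and avoids the case split entirely. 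Both ultimately rest on the same boundary estimates (Theorem~\ref{intersF}(4) and Lemma~\ref{squeeze} share a proof), so the difference is one of packaging rather than substance, but your packaging is cleaner.
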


\begin{proof}
Since $D$ is complete hyperbolic, Property (1) in Definition \ref{admis-def} implies that all accumulation points of $\{u_n\}$ are contained in $\partial D$. Let $p\in \partial D$ be one of such points. Let $\{u_{n_k}\}$ be a subsequence of $\{u_n\}$ such that $u_{n_k}\to p$. Let $z\in E_x(\{u_n\}, R)$ for some $R>0$. Then
\begin{equation*}
\begin{split}
\liminf_{w\to p} [K_D(z,w)-K_D(x,w)]&\leq \limsup_{k\to \infty}[K_D(z,u_{n_k})-K_D(x,u_{n_k})]\\
&\leq \limsup_{n\to \infty}[K_D(z,u_{n})-K_D(x,u_{n})]\leq \frac{1}{2}\log R.
\end{split}
\end{equation*}
Hence $z\in F_x(p,R)$. This implies that $E_x(\{u_n\}, R)\subset F_x(p,R)$. In particular, if $q\in \partial D$ were another accumulation point of $\{u_n\}$ different from $p$, it would hold 
\[
E_x(\{u_n\}, R)\subset F_x(p,R)\cap F_x(q,R)\quad \forall R>0,
\]
contradicting Lemma \ref{squeeze}. Hence $\lim_{n\to \infty}u_n=p$.

Now, let $\{v_n\}$ be another admissible sequence which is equivalent to $\{u_n\}$. From what we just proved, there exists $q\in \partial D$ such that $\lim_{n\to q}v_n=q$. If $q\neq p$, let $R_0>0$ be given by Lemma \ref{squeeze} such that $F_x(p,R)\cap F_x(q,R)=\emptyset$ for all $0<R<R_0$. Fix $0<R<R_0$. By the previous argument, $E_x(\{v_n\}, R)\subset F_x(q,R)$. Since $\{u_n\}$ is equivalent to $\{v_n\}$ this implies that there exists $R'>0$ such that $E_x(\{u_n\}, R')\subset E_x(\{v_n\}, R)\subset F_x(q, R)$. Now, if $R'\geq R$, it follows that $E_x(\{u_n\}, R)\subset F_x(q, R)$, giving a  contradiction since we proved that $E_x(\{u_n\}, R)\subset F_x(p, R)$. If $R'<R$, again we obtain a contradiction since  $E_x(\{u_n\}, R')\subset E_x(\{u_n\}, R)\subset F_x(p, R)$.

In order to prove (1), we already saw that $\cup_{\{u_n\}\in \mathcal A_p} E_x(\{u_n\}, R)\subset F_x(p,R)$. Let $z\in F_x(p,R)$. By the very definition, there exists a sequence $\{u_n\}\subset D$ converging to $p$ such that $\lim_{n\to \infty}[K_D(z,u_n)-K_D(x,u_n)]=\frac{1}{2}\log R'$ for some $R'<R$. By Proposition \ref{stronglypseudo}, $\{u_n\}$ is admissible and $z\in E_x(\{u_n\}, R)$, hence $F_x(p,R)\subset \cup_{\{u_n\}\in \mathcal A_p} E_x(\{u_n\}, R)$. 

(2) the proof is similar and we omit it. The fact that $E_x(p,R)\neq\emptyset$ follows then immediately from \eqref{coneeq}.
\end{proof}

The main result of this section is the following:

\begin{theo}\label{main-pseudo}
Let $D\subset \C^N$ be a bounded strongly pseudoconvex domain with $C^3$ boundary. Then $\hat{D}$ endowed with the horosphere topology  is homeomorphic to $\overline{D}$ (closure of $D$ in $\C^N$) endowed with the Euclidean topology.
\end{theo}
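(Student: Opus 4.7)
The strategy is to construct an explicit bijection $\Phi\colon\hat D\to\overline D$ that restricts to the identity on $D$, and then to verify sequential continuity in both directions; since the horosphere topology on $\hat D$ is defined through sequentially closed sets and $\overline D$ is metrizable, this will yield a homeomorphism. Define $\Phi(z):=z$ for $z\in D$, and for $\underline x\in\partial_HD$ represented by an admissible sequence $\{u_n\}$, set $\Phi(\underline x):=\lim_n u_n$. Proposition~\ref{convergenceadmsp} guarantees that this limit exists in $\partial D$ and agrees for every admissible sequence in the class $\underline x$, so $\Phi$ is well defined; Proposition~\ref{stronglypseudo} gives injectivity on $\partial_HD$ (two admissible sequences sharing a boundary limit are equivalent), and Proposition~\ref{pointstrict} gives surjectivity (every Euclidean-convergent sequence to $p\in\partial D$ is admissible, and maps to $p$).

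To show sequential continuity of $\Phi$, take $\{z_m\}\subset\hat D$ converging in the horosphere topology to $\underline x\in\partial_HD$. After passing to subsequences, either $z_m\in D$ for all $m$ or $z_m=\underline y_m\in\partial_HD$ for all $m$. In either case, Definitions~\ref{conv-out-out}--\ref{conv-int-out} supply an admissible $\{u_n\}$ representing $\underline x$ together with admissible $\{v_n^m\}$ such that
\[
E_x(\{v_n^m\},R)\cap E_x(\{u_n\},R)\neq\emptyset\quad\text{for all }m\geq m_R,
\]
and either $z_m\in E_x(\{v_n^m\},R)$ (interior case) or $[\{v_n^m\}]=\underline y_m$ (boundary case). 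Set $p:=\lim_n u_n=\Phi(\underline x)$ and $p_m:=\lim_n v_n^m\in\partial D$ (existence by Proposition~\ref{convergenceadmsp}); in the boundary case $p_m=\Phi(\underline y_m)$. By Proposition~\ref{convergenceadmsp}(1), $E_x(\{u_n\},R)\subset F_x(p,R)$ and $E_x(\{v_n^m\},R)\subset F_x(p_m,R)$. If $p_m\not\to p$, extract a subsequence $\{p_{m_k}\}$ lying in a closed set $V\subset\partial D$ with $p\notin V$; Lemma~\ref{squeeze} yields $R^\ast>0$ with $F_x(p,R^\ast)\cap\bigcup_{q\in V}F_x(q,R^\ast)=\emptyset$, contradicting non-emptiness of the horosphere intersection at $R=R^\ast$. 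Hence $p_m\to p$, which settles the boundary case. For the interior case, Lemma~\ref{squeeze} applied to the compact set $\{p_m\}_m\cup\{p\}$ gives, for any neighborhood $U$ of $p$, an $R^\ast>0$ with $F_x(p_m,R^\ast)\subset U$ for all $m$ large, and then $z_m\in E_x(\{v_n^m\},R^\ast)\subset F_x(p_m,R^\ast)\subset U$ eventually, so $z_m\to p$.

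For sequential continuity of $\Phi^{-1}$, let $\{z_m\}\subset\overline D$ with $z_m\to p\in\partial D$ and set $\underline x:=\Phi^{-1}(p)$. Let $\{u_n\}\subset D$ be any sequence with $u_n\to p$; it is admissible by Proposition~\ref{pointstrict} and represents $\underline x$. For each $m$, define $q_m\in\partial D$ to be the nearest-point projection $\pi(z_m)$ when $z_m\in D$ and $q_m:=z_m$ otherwise; then $q_m\to p$. Pick any $\{v_n^m\}\subset D$ with $v_n^m\to q_m$; this is admissible. We verify the hypothesis of Definition~\ref{conv-int-out} (or Definition~\ref{conv-out-out} with $\{u_n^m\}:=\{v_n^m\}$, which then represents $\Phi^{-1}(z_m)$ when $z_m\in\partial D$). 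Fix $R>0$ and let $\varepsilon_R>0$ be the parameter of Proposition~\ref{pointstrict} for aperture $2$ and radius $R$; crucially, $\varepsilon_R$ does not depend on the vertex. When $z_m\in D$ and $\delta(z_m)<\varepsilon_R$, we have $|z_m-q_m|=\delta(z_m)<\min\{2\delta(z_m),\varepsilon_R\}$, so $z_m\in\mathcal C(q_m,2,\varepsilon_R)\subset E_x(\{v_n^m\},R)$. For the intersection, let $\nu_p$ denote the inward unit normal at $p$ (available thanks to the $C^2$ boundary) and consider $w_t:=p+t\nu_p$ for $t<\varepsilon_R/4$; since $\delta(w_t)=t+o(t)$, one has $w_t\in\mathcal C(p,2,\varepsilon_R)\subset E_x(\{u_n\},R)$, and whenever $|p-q_m|<t/2$ the estimate $|w_t-q_m|\leq t+|p-q_m|<2\delta(w_t)$ gives $w_t\in\mathcal C(q_m,2,\varepsilon_R)\subset E_x(\{v_n^m\},R)$. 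This inclusion holds for all $m$ large, so the intersection is nonempty and $m_R$ exists.

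Since the horosphere topology on $\hat D$ has closed sets equal to its sequentially closed sets and $\overline D$ is metrizable, sequential continuity of the bijection $\Phi$ and of $\Phi^{-1}$ suffices to conclude that $\Phi$ is a homeomorphism. The main technical obstacle is the continuity of $\Phi^{-1}$: one must exhibit, for each $z_m$ near $p$, an admissible sequence $\{v_n^m\}$ whose horosphere simultaneously contains $z_m$ and meets $E_x(\{u_n\},R)$. The uniform-in-vertex cone-region containment of Proposition~\ref{pointstrict} and the existence of a smooth inward normal at $p$ are the geometric ingredients that make the construction go through, and they are where the strong pseudoconvexity and $C^2$--$C^3$ boundary regularity are used.
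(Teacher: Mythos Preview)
Your proof is correct and follows essentially the same approach as the paper. The paper phrases the argument as ``$C\subset\overline{D}$ is Euclidean-closed if and only if $\Theta^{-1}(C)$ is horosphere-closed'' and checks three cases in each direction, while you phrase it as sequential continuity of $\Phi$ and $\Phi^{-1}$; but since the horosphere topology is by definition generated by sequentially closed sets and $\overline{D}$ is metrizable, these framings are equivalent, and the technical ingredients (Lemma~\ref{squeeze} for $\Phi$, the uniform cone inclusion of Proposition~\ref{pointstrict} together with nearest-point projection for $\Phi^{-1}$) are identical to the paper's.
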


\begin{proof}
We define a map $\Theta: \hat{D} \to \overline{D}$ as follows. If $z\in D\subset \hat{D}$ then we just set $\Theta(z)=z$. If $\underline{y}\in \partial_H D$, by Proposition \ref{convergenceadmsp}  all admissible sequences representing $\underline{y}$ converge to a same point $p\in \partial D$. Then we set $\Theta(\underline{y}):=p$. The map $\Theta$ is bijective by Proposition \ref{stronglypseudo}. 

In order to prove that $\Theta$ is a homeomorphism, we show that given  $C\subset \overline{D}$ then $C$ is closed in the Euclidean topology if and only if $\Theta^{-1}(C)\subset \hat{D}$ is closed in the horosphere topology.

{\sl Assume that $C$ is closed in the Euclidean topology}. In order to show that $\Theta^{-1}(C)$ is closed in the horosphere topology, we need to show that if $\{\xi_n\}$ is a sequence in $\Theta^{-1}(C)$ converging to $\xi\in \hat{D}$, then $\xi\in \Theta^{-1}(C)$. There are three cases to examine, from which the general case follows immediately:

{\sl Case 1.} If $\{\Theta^{-1}(z_n)\}$ is a sequence in $\Theta^{-1}(C)\cap D$ converging to $\Theta^{-1}(z)\in D$, then by the very definition of $\Theta$ and since the topology induced on $D\subset\hat{D}$ by the horosphere topology is the Euclidean topology, it follows easily that $\Theta^{-1}(z)\in \Theta^{-1}(C)$.

{\sl Case 2.} Let now $\{\underline{y}_m\}$ be a sequence in $\Theta^{-1}(C)\cap \partial_H D$, converging in the horosphere topology to $\underline{y}\in \partial_H D$ (by Remark \ref{boundaryconv} a sequence on the horosphere boundary can only converge to a point on the horosphere boundary). Let $p_m:=\Theta(\underline{y}_m)\in C$, $m\in \N$ and let $p:=\Theta(\underline{y})$. By the definition of convergence in the horosphere topology, there exist horosphere sequences $\{u_n^m\}_{n\in \N}$ with $[u_n^m]=\underline{y}_m$ and $\{u_n\}$ with $[u_n]=\underline{y}$ such that for every $R>0$ there exists $m_R\in \N$ such that $E_x(\{u_n^m\}, R)\cap E_x(\{u_n\}, R)\neq \emptyset$ for $m>m_R$. By the definition of $\Theta$, it holds  $\lim_{n\to \infty} u_n^m=p_m$ and $\lim_{n\to \infty}u_n=p$. Thus, by Proposition \ref{convergenceadmsp}.(1), it follows that 
\[
F_x(p, R)\cap F_x(p_m, R)\neq\emptyset
\]
 for all $m>m_R$. If $\{p_m\}$ contained a subsequence not converging to $p$, say $\{p_{m_k}\}$,  taking $V:=\{p\}$ and $V':=\overline{\{p_{m_k}\}}$,  we would get a contradiction with Lemma \ref{squeeze}. Therefore,   $\lim_{m\to \infty} p_m=p$. Since $C$ is closed, this implies that $p\in C$ as well, hence $\Theta^{-1}(p)=\underline{y}$ belongs to $\Theta^{-1}(C)$.
 
{\sl Case 3.} Finally, assume $\{z_n=\Theta^{-1}(z_n)\}\subset \Theta^{-1}(C)\cap D$ converges to $\underline{y}\in \partial_H D$. Let $p=\Theta(\underline{y})\in \partial D$.  By Definition \ref{conv-int-out}, and by Proposition \ref{convergenceadmsp}.(1), it follows that there exists a sequence $\{p_n\}\subset \partial D$ such that for all $R>0$ there exists $n_R\in \N$   such that $z_n\in F_x(p_n, R)$ and $F_x(p_n, R)\cap F_x(p, R)\neq \emptyset$ for $n\geq n_R$. As before, this implies that $\lim_{n\to \infty} p_n=p$. Thus, given an open set $U$ containing $p$, there exists $J\in \N$ such that  $\{p_n\}_{n\geq J}\subset U$. Let $V=\{p_n\}_{n\geq J}\cup \{p\}$. The set $V$ is closed, and by Lemma \ref{squeeze}, there exists $R_0>0$ such that  $\bigcup_{n\geq J} F_x(p_n, R)\subset U\cap D$ for all $0<R<R_0$. Therefore, $\{z_n\}$ is eventually contained in $U$. By the arbitrariness of $U$, it follows that $\lim_{n\to \infty}z_n=p$. Since $C$ is closed, this implies that $p\in C$ and hence $\underline{y}=\Theta^{-1}(p)\in \Theta^{-1}(C)$. 
 
Hence, if $C$ is closed in the Euclidean topology then $\Theta^{-1}(C)$ is closed in the horosphere topology.
 
{\sl Assume that $\Theta^{-1}(C)$ is closed in the horosphere topology}. We have to show that if $\{\xi_n\}\subset C$ is a sequence converging to $\xi\in \overline{D}$ then $\xi \in C$. As before, we distinguish three cases.

{\sl Case 1.} If $\{\xi_n\}, \xi\subset D$, then the statement is clearly true. 

{\sl Case 2.} Let $\{p_m\}$ be a sequence in $C\cap \partial D$ converging to $p\in \partial D$. Let $\underline{y}_m:=\Theta^{-1}(p_m)\in \Theta^{-1}(C)\cap \partial_H D$ and let $\underline{y}:=\Theta^{-1}(p)\in \partial_H D$. 

We want to show that $\{\underline{y}_m\}$ converges to $\underline{y}$ in the horosphere topology. Once we proved this, since $\Theta^{-1}(C)$ is closed, it follows that $\underline{y}\in \Theta^{-1}(C)$ and hence, $p=\Theta(\underline{y})\in C$.

To show this, for every $m\in \N$, let $\{v_n^m\}_{n\in \N}$ be a sequence converging to $p_m$,  and let $\{v_n\}$ be a sequence converging to $p$.  By Proposition \ref{convergenceadmsp}, $[\{v_n^m\}]=\underline{y}_m$ and $[\{v_n\}]=\underline{y}$. Fix $R>0$. By \eqref{coneeq}, there exists $\varepsilon>0$ such that the c\v{o}ne region $\mathcal C(q, 2, \varepsilon)\subset E_x(\{w_n\}, R)$ for all $q\in \partial D$ and all sequences $\{w_n\}$ converging to $q$. Since $\partial D$ is $C^1$, there exists an open neighborhood $V$ of $p$ such that for all $q\in V\cap \partial D$ it holds
$\mathcal C(p, 2, \varepsilon)\cap  \mathcal C(q, 2, \varepsilon)\neq \emptyset$. Since  $\{p_m\}\subset V$ eventually, it follows that $E_x(\{v_n^m\}, R)\cap E_x(\{v_n\}, R)\neq \emptyset$ for all $m$ sufficiently large. By definition,  this means that $\{\underline{y}_m\}$ converges to $\underline{y}$ in the horosphere topology, as needed.

{\sl Case 3.} Let $\{z_m\}$ be a sequence in $C\cap  D$ converging to $p\in \partial D$. Let $\underline{y}:=\Theta^{-1}(p)$. As before, to conclude that $p\in C$, it is enough to show that $\{z_m=\Theta^{-1}(z_m)\}$ converges to $\underline{y}$ in the horosphere topology. To show this, fix $\alpha>1$ and for $N\in \N$, let $\varepsilon_N>0$ be given by Proposition \ref{pointstrict} such that for every $q\in \partial D$ and for every sequence $\{w_n\}$ converging to $q$ it holds $\mathcal C(q,\alpha, \varepsilon_N)\subset E_x(\{w_n\}, \frac{1}{N})$.

Since $\{z_m\}$ converges to $p$ and $\partial D$ is $C^2$, we can assume with no loss of generality that for all $m\in \N$ there exists a unique $p_m\in \partial D$ such that $|z_m-p_m|=\delta(z_m)$. Note that $z_m\in \mathcal C(p_m,\alpha, r)$ for all $r>\delta(z_m)$. 

For each $m\in \N$ let $\{v_n^m\}\subset D$ be a sequence converging to $p_m$ and let $\{u_n\}\subset D$ be a sequence converging to $p$. By Proposition \ref{convergenceadmsp},  $[\{u_n\}]=\underline{y}$.

Fix $N\in \N$. Since $\delta(z_m)\to 0$,  there exists $m^1_N\in \N$ such that $z_m \in \mathcal C(p_m,\alpha, \varepsilon_N)\subset E_x(\{v_n^m\}, \frac{1}{N})$ for all $m\geq m^1_N$. Moreover, since $\{p_m\}$ converges to $p$, there exists $m_N^2\in \N$ such that $\mathcal C(p_m,\alpha, \varepsilon_N)\cap C(p,\alpha, \varepsilon_N)\neq \emptyset$ for all $m\geq m^2_N$, which implies $E_x(\{v_n^m\}, \frac{1}{N})\cap E_x(\{u_n\}, \frac{1}{N})\neq \emptyset$ for all $m\geq m^2_N$. 

Now let $R>0$ be given. Let $N\in \N$, $N>1/R$ and let $m_R:=\max\{m^1_N, m^2_N\}$. Then for all $m\geq m_R$ it holds $z_m\in  E_x(\{v_n^m\}, \frac{1}{N})\subset E_x(\{v_n^m\}, R)$ and 
\[
\emptyset \neq E_x(\{v_n^m\}, \frac{1}{N})\cap E_x(\{u_n\}, \frac{1}{N})\subset E_x(\{v_n^m\}, R)\cap E_x(\{u_n\}, R),
\]
which means that $\{z_m\}$ converges to $\underline{y}$ in the horosphere topology, and we are done.

Hence, if $\Theta^{-1}(C)$ is closed in the horosphere topology then $C$ is closed in the Euclidean topology.
\end{proof}

We end this section by proving the following result which will be useful to study boundary behavior:

\begin{prop}\label{strongly-pseudo-boundary}
Let $D\subset \C^N$ be a bounded strongly pseudoconvex domain with $C^3$ boundary. Let $\underline{x}\in \partial_H D$. Then there exists $p_{\underline{x}}\in \partial D$ such that  $\hbox{ I}^H_D(\underline{x})=\{p_{\underline{x}}\}$. Moreover, if $\underline{y}\in \partial_H D$ then $p_{\underline{x}}=p_{\underline{y}}$ if and only if $\underline{x}= \underline{y}$.
 \end{prop}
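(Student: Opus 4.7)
The plan is to reduce the entire statement to the homeomorphism $\Theta\colon \hat{D}\to\overline{D}$ already produced in Theorem \ref{main-pseudo}. Recall that $\Theta$ is the identity on $D$ and, by Proposition \ref{convergenceadmsp}, sends $\underline{x}\in\partial_H D$ to the well-defined Euclidean limit $p_{\underline{x}}\in\partial D$ of any admissible sequence representing $\underline{x}$. Since $D$ is bounded, $\overline{D}^{\mathbb{C}\mathbb{P}^N}$ coincides with the Euclidean closure $\overline{D}\subset\C^N$, so the horosphere impression automatically lies in $\overline{D}$, and in fact in $\partial D$ by the remark following Definition \ref{impr-def}.

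To establish $\hbox{I}^H_D(\underline{x})=\{p_{\underline{x}}\}$, I would prove the two inclusions separately. For $\hbox{I}^H_D(\underline{x})\subseteq\{p_{\underline{x}}\}$, take any $p\in \hbox{I}^H_D(\underline{x})$ together with a sequence $\{w_n\}\subset D$ that converges to $p$ Euclideanly and to $\underline{x}$ in the horosphere topology. Continuity of $\Theta$, guaranteed by Theorem \ref{main-pseudo}, forces $w_n=\Theta(w_n)\to \Theta(\underline{x})=p_{\underline{x}}$ Euclideanly; by Hausdorffness of the Euclidean topology, $p=p_{\underline{x}}$. For the opposite inclusion $p_{\underline{x}}\in \hbox{I}^H_D(\underline{x})$, pick any admissible sequence $\{u_n\}$ representing $\underline{x}$. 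Proposition \ref{convergenceadmsp} gives $u_n\to p_{\underline{x}}$ in $\C^N$, and continuity of $\Theta^{-1}$ then yields $u_n=\Theta^{-1}(u_n)\to\Theta^{-1}(p_{\underline{x}})=\underline{x}$ in the horosphere topology. So $\{u_n\}$ witnesses $p_{\underline{x}}\in\hbox{I}^H_D(\underline{x})$.

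The second assertion is immediate from the bijectivity of $\Theta$: if $p_{\underline{x}}=p_{\underline{y}}$ then $\Theta(\underline{x})=\Theta(\underline{y})$, hence $\underline{x}=\underline{y}$; the converse is tautological. There is no real obstacle here, as the substantive work has already been carried out in Theorem \ref{main-pseudo}; in particular, the delicate cone-region argument of Case 3 in its proof is precisely what supplies the Euclidean-to-horosphere convergence invoked (via continuity of $\Theta^{-1}$) in the reverse containment above.
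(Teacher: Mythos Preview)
Your proof is correct and follows essentially the same route as the paper: both arguments simply read off the result from the homeomorphism $\Theta$ of Theorem~\ref{main-pseudo}, using that $\Theta$ is the identity on $D$ and sends $\underline{x}$ to the common Euclidean limit $p_{\underline{x}}$ of its representatives. You are slightly more explicit than the paper in checking the reverse inclusion $p_{\underline{x}}\in\hbox{I}^H_D(\underline{x})$, but this is harmless and the idea is the same.
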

\begin{proof}
By Theorem \ref{main-pseudo}, there is a homeomorphism $\Theta: \hat{D}\to \overline{D}$ (where $\overline{D}$ is the closure of $D$ in $\C^N$). Let $p_{\underline{x}}:=\Theta(\underline{x})$. Recall also that $\Theta(z)=z$ for $z\in D$.

We have to show that, if $\{w_n\}\subset D$ is a sequence converging to $\underline{x}$ in the horosphere topology, then $\{w_n\}$ converges to $p_{\underline{x}}$ in the Euclidean topology. This follows at once by Theorem \ref{main-pseudo}, because  $\{w_n\}$ converges to $\underline{x}$ in the horosphere topology if and only if $\{\Theta(w_n)=w_n\}$ converges $p_{\underline{x}}$ in the Euclidean topology. This also shows the last statement of the proposition.
\end{proof}

\section{Convex domains}\label{convexsec}

In this section we consider convex domains in $\mathbb C^n$. Here we mean convex in the real geometrical sense, that is, $D\subset \C^N$ is {\sl convex }ÃÂ if for every two points $p,q\in D$ the real segment $[p,q]$ joining $p$ and $q$ is contained in  $D$. 

By \cite{BS}, a (possibly unbounded) convex domain in $\C^N$ is complete hyperbolic if and only if it is biholomorphic to a bounded domain of $\C^N$, in particular, a convex domain is hyperbolic if and only if it is complete hyperbolic. 

We start with the following result:

\begin{prop}\label{horo-convex}
Let $D\subset \C^N$ be a  hyperbolic convex domain. Let $x\in D$. Let $\{u_n\}$ be an admissible sequence in $D$. Then for every $R>0$ the horosphere $E_x(\{u_n\}, R)$ is convex.
\end{prop}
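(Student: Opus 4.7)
The plan is to reduce convexity of $E_x(\{u_n\},R)$ to the well-known convexity of Kobayashi balls in hyperbolic convex domains. Recall (see, e.g., Abate's book) that if $D\subset\C^N$ is a hyperbolic convex domain, then for every $w\in D$ and every $r>0$ the Kobayashi ball $B_K(w,r)=\{z\in D:K_D(z,w)<r\}$ is convex. Equivalently, for any $z_1,z_2,w\in D$ and any $t\in[0,1]$ one has
\[
K_D(tz_1+(1-t)z_2,\,w)\le \max\{K_D(z_1,w),\,K_D(z_2,w)\}.
\]
This is the only non-trivial analytic input; everything else is bookkeeping with $\limsup$.

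Given $z_1,z_2\in E_x(\{u_n\},R)$ and $t\in[0,1]$, I set $z_t:=tz_1+(1-t)z_2$ and apply the above inequality with $w=u_n$ for each $n$. Subtracting $K_D(x,u_n)$ from both sides gives
\[
K_D(z_t,u_n)-K_D(x,u_n)\le \max\bigl\{K_D(z_1,u_n)-K_D(x,u_n),\,K_D(z_2,u_n)-K_D(x,u_n)\bigr\}.
\]
Using the elementary identity $\limsup_n \max\{a_n,b_n\}=\max\{\limsup_n a_n,\limsup_n b_n\}$ (valid when the right-hand side is finite, which it is here by admissibility), I take $\limsup$ and obtain
\[
\limsup_{n\to\infty}[K_D(z_t,u_n)-K_D(x,u_n)]\le \max_{i=1,2}\limsup_{n\to\infty}[K_D(z_i,u_n)-K_D(x,u_n)]<\tfrac{1}{2}\log R,
\]
so $z_t\in E_x(\{u_n\},R)$, as desired.

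There is no real obstacle: the proof is a direct combination of a known geometric property of the Kobayashi distance on convex domains with the definition of the horosphere. The only point that merits a sentence in the write-up is a pointer to the classical source for convexity of Kobayashi balls on hyperbolic convex domains. No use of Lempert's theorem or of smoothness of $\partial D$ is required, which is consistent with the statement of the proposition for arbitrary hyperbolic convex domains.
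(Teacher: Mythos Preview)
Your proof is correct and follows essentially the same route as the paper's: both reduce to the inequality $K_D(tz_1+(1-t)z_2,w)\le\max\{K_D(z_1,w),K_D(z_2,w)\}$ (equivalently, convexity of Kobayashi balls), apply it with $w=u_n$, subtract $K_D(x,u_n)$, and pass to the $\limsup$. One small remark: the reference you cite in Abate's book states convexity of Kobayashi balls for \emph{bounded} convex domains, whereas here $D$ is only assumed hyperbolic convex and may be unbounded; the paper handles this by the standard exhaustion $D_N=D\cap B(0,N)$ and a limit, which you may want to add as a one-line justification.
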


\begin{proof}
Let $B(0,N):=\{z\in \C^N : |z|<N\}$, $N\in \N$.  Let $D_N:=D\cap B(0,N)$. Then $D_N$ is a bounded convex domain and its Kobayashi distance $k_{D_N}$ is a convex function (see \cite[Prop. 2.3.46]{Aba}). Passing to the limit, it turns out that $k_D$ is a convex function as well. 

Now, let $\{u_n\}\subset D$ be an admissible sequence. Let $R>0$ and let $z,w \in E_x(\{u_n\}, R)$. Then, for $s\in (0,1)$ we have
\begin{equation*}
K_D(sz+(1-s)w,u_n)-K_D(x,u_n) 
\leq \max\{K_D(z,u_n), K_D(w,u_n)\}-K_D(x,u_n).
\end{equation*}
Taking the limsup as $n\to \infty$, this implies that $sz+(1-s)w\in E_x(\{u_n\}, R)$, which is thus convex.
\end{proof}

\begin{defi}
Let $D\subset \C^N$ be a bounded convex domain, $x\in D$. Let $\{u_n\}$ be an admissible sequence. For $R>0$ let 
\[
\hbox{II}(\{u_n\}, R):=\overline{E_x(\{u_n\}, R)}\cap \partial D. 
\]
\end{defi}
Note that, if $\{u_n\}$ is an admissible sequence and denoting $[\{u_n\}]\in \partial_HD$, by Lemma \ref{II-to-hor} it holds
\[
\hbox{II}_D^H([\{u_n\}])=\bigcap_{R>0}\overline{E_x(\{u_n\}, R)}=\bigcap_{R>0} \hbox{II}(\{u_n\}, R).
\]

\begin{rem}\label{simple}
Note that $\hbox{II}(\{u_n\}, R)$ and $\hbox{II}_D^H([\{u_n\}])$ are nonempty   sets and $\hbox{II}_D^H([\{u_n\}])$ is convex.  Indeed, this follows at once from Proposition \ref{horo-convex}, Proposition \ref{propertyhoro}.(2) and since $\overline{E_x(\{u_n\}, R)}\subset \overline{E_x(\{u_n\}, R')}$ for all $0<R<R'$. Moreover, while $\hbox{II}(\{u_n\}, R)$ depends in general on the base point $x$, the horosphere principal part, $\hbox{II}_D^H([\{u_n\}])$, does not.
\end{rem}

\begin{lem}\label{whois}
Let $D\subset \C^N$ be a bounded convex domain. Let $\{u_n\}$ be an admissible sequence.  Let  $\mathcal M$ denote the set of all convergent  subsequences extracted from $\{u_n\}$. Then
\begin{equation}\label{intersI}
\hbox{II}_D^H([\{u_n\}])=\bigcap_{\{v_n\}\in \mathcal M} \hbox{II}_D^H([\{v_n\}]).
\end{equation}
Moreover, if $\{u_n\}$ is convergent to $p\in \partial D$, then $p\in \hbox{II}_D^H([\{u_n\}])$.
\end{lem}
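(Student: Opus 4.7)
The plan is to reduce \eqref{intersI} to the analogous statement for the open horospheres $E_x(\{u_n\},R)$ and then promote that identity to closures by a convex-geometric swap of closure with intersection; the ``Moreover'' part will follow from a direct convex-segment estimate for $K_D$. The inclusion $\hbox{II}_D^H([\{u_n\}])\subseteq \bigcap_{\{v_n\}\in \mathcal M} \hbox{II}_D^H([\{v_n\}])$ is immediate: for any $\{v_n\}\in \mathcal M$, the real sequence $\{K_D(w,v_n)-K_D(x,v_n)\}$ is a subsequence of $\{K_D(w,u_n)-K_D(x,u_n)\}$, so its $\limsup$ is no greater, whence $E_x(\{u_n\},R)\subseteq E_x(\{v_n\},R)$; passing to closures and intersecting over $R>0$ yields the inclusion.

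For the reverse inclusion, the first step is the open-horosphere identity
\[
E_x(\{u_n\},R)=\bigcap_{\{v_n\}\in \mathcal M}E_x(\{v_n\},R), \qquad R>0.
\]
Indeed, if $w\notin E_x(\{u_n\},R)$ then $\limsup_n[K_D(w,u_n)-K_D(x,u_n)]\geq \frac{1}{2}\log R$; first extracting a subsequence of $\{u_n\}$ witnessing this $\limsup$ and then a further subsequence converging in $\overline{D}$ (by compactness of $\overline{D}$) produces $\{v_n^*\}\in \mathcal M$ along which the same inequality holds as a genuine limit, so $w\notin E_x(\{v_n^*\},R)$. The step I expect to be the main obstacle is upgrading this identity to the closures, for which I plan to invoke the following convex-analytic fact: \emph{for a family $\{A_i\}_{i\in I}$ of open convex subsets of $\C^N$ with $\bigcap_i A_i\neq \emptyset$, one has $\overline{\bigcap_i A_i}=\bigcap_i\overline{A_i}$.} Given $p\in \bigcap_i\overline{A_i}$ and any $a\in \bigcap_i A_i$, for each $i$ and each $t\in(0,1]$ the point $ta+(1-t)p$ lies in $A_i$: a ball $B(a,r)\subseteq A_i$ is carried by the affine map $w\mapsto tw+(1-t)p$ onto the ball $B(ta+(1-t)p,tr)$, which is contained in $\overline{A_i}$ by convexity, and since $\mathrm{int}(\overline{A_i})=A_i$ for open convex $A_i$, this ball lies in $A_i$, hence so does its center. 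Thus $ta+(1-t)p\in \bigcap_i A_i$, and letting $t\to 0^+$ gives $p\in \overline{\bigcap_i A_i}$. Applying this to the family $\{E_x(\{v_n\},R)\}_{\{v_n\}\in \mathcal M}$ of open convex sets (Proposition~\ref{horo-convex}) with nonempty intersection $E_x(\{u_n\},R)$ (by admissibility) yields $\overline{E_x(\{u_n\},R)}=\bigcap_{\{v_n\}}\overline{E_x(\{v_n\},R)}$; intersecting over $R>0$ and commuting the two intersections then produces \eqref{intersI}.

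For the ``Moreover'' statement, assume $u_n\to p\in \partial D$ and consider the real segment $\sigma(s):=(1-s)x+sp$, which lies in $D$ for every $s\in[0,1)$ by convexity. Joint convexity of $K_D$ on the bounded convex domain $D$ (the fact used in the proof of Proposition~\ref{horo-convex}) gives $K_D((1-s)x+su_n,u_n)\leq (1-s)K_D(x,u_n)+sK_D(u_n,u_n)=(1-s)K_D(x,u_n)$. Since $(1-s)x+su_n\to \sigma(s)\in D$ by $u_n\to p$, continuity of $K_D$ yields $K_D(\sigma(s),u_n)\leq (1-s)K_D(x,u_n)+o(1)$ as $n\to \infty$; subtracting $K_D(x,u_n)$ and using $K_D(x,u_n)\to \infty$, this gives $\limsup_n[K_D(\sigma(s),u_n)-K_D(x,u_n)]=-\infty$ for every $s\in(0,1)$. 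Hence $\sigma(s)\in E_x(\{u_n\},R)$ for every $R>0$ and every $s\in(0,1)$, and since $\sigma(s)\to p$ as $s\to 1^-$, one concludes $p\in \overline{E_x(\{u_n\},R)}$ for every $R>0$, i.e., $p\in \hbox{II}_D^H([\{u_n\}])$.
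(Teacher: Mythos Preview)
Your argument for \eqref{intersI} is correct and follows essentially the paper's route: you isolate as a general lemma the fact that for open convex sets $A_i$ with nonempty intersection one has $\overline{\bigcap_i A_i}=\bigcap_i\overline{A_i}$, whereas the paper runs the same segment argument inline.

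The ``Moreover'' part, however, has a real gap. The Kobayashi distance on a convex domain is \emph{not} jointly convex. What the proof of Proposition~\ref{horo-convex} actually uses (and what \cite[Prop.~2.3.46]{Aba} states) is only that Kobayashi balls are convex, i.e.\ the quasi-convexity
\[
K_D(sz+(1-s)w,a)\leq \max\{K_D(z,a),K_D(w,a)\}.
\]
Your inequality $K_D((1-s)x+su_n,u_n)\leq (1-s)K_D(x,u_n)$ already fails in $\D$: with $x=0$ and $u_n=1-1/n$ one computes $K_\D(su_n,u_n)=\tfrac12\log(2n)+O(1)$ while $(1-s)K_\D(0,u_n)=(1-s)\tfrac12\log(2n)+O(1)$, so the claimed bound is violated for large $n$ whenever $s\in(0,1)$. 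There is also an internal consistency check you can run: your conclusion $\limsup_n[K_D(\sigma(s),u_n)-K_D(x,u_n)]=-\infty$ would place $\sigma(s)$ in $\bigcap_{R>0}E_x(\{u_n\},R)$, contradicting Proposition~\ref{propertyhoro}(3).

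The paper's argument avoids this by using only the contraction property of holomorphic self-maps. Since $D$ is convex, $f_t^n(w):=tw+(1-t)u_n$ maps $D$ into itself and fixes $u_n$, hence $K_D(tz+(1-t)u_n,u_n)=K_D(f_t^n(z),f_t^n(u_n))\leq K_D(z,u_n)$. For a fixed $z\in E_x(\{u_n\},R)$ this gives, after adding the vanishing term $K_D(tz+(1-t)p,\,tz+(1-t)u_n)$, that $tz+(1-t)p\in E_x(\{u_n\},R)$ for every $t\in(0,1]$; letting $t\to 0$ shows $p\in\overline{E_x(\{u_n\},R)}$ for each $R>0$. Replacing your convexity step with this contraction step repairs the proof.
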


\begin{proof}
Let $R>0$ and let $z\in E_x(\{u_n\}, R)$. For every $\{v_n\}\in \mathcal M$,
\[
\limsup_{n\to \infty}[K_D(z,v_n)-K_D(x, v_n)]\leq \limsup_{n\to \infty}[K_D(z,u_n)-K_D(x, u_n)]<\frac{1}{2}\log R,
\]
proving that $E_x(\{u_n\}, R)\subseteq\bigcap_{\{v_n\}\in \mathcal M} E_x(\{v_n\}, R)$. Conversely, assume $z\in \bigcap_{\{v_n\}\in \mathcal M} E_x(\{v_n\}, R)$. We can find a converging subsequence $\{u_{n_k}\}$ of $\{u_n\}$  such that
\[
\lim_{k\to \infty}[K_D(z,u_{n_k})-K_D(x, u_{n_k})]=\limsup_{n\to\infty}[K_D(z,u_n)-K_D(x, u_n)]. 
\]
Since $\{u_{n_k}\}\subset \mathcal M$, it follows that $z\in E_x(\{u_{n_k}\}, R)$, and the previous equation implies $z\in E_x(\{u_n\}, R)$. Therefore,  for all $R>0$,
\begin{equation}\label{int-hor}
E_x(\{u_n\}, R)=\bigcap_{\{v_n\}\in \mathcal M} E_x(\{v_n\}, R).
\end{equation}
In order to prove \eqref{intersI}, by \eqref{int-hor}, it is clear that $\hbox{II}_D^H([\{u_n\}])\subseteq\bigcap_{\{v_n\}\in \mathcal M} \hbox{II}_D^H([\{v_n\}])$. Conversely, let $q\in \bigcap_{\{v_n\}\in \mathcal M} \hbox{II}_D^H([\{v_n\}])$, let $R>0$ and let $z\in E_x(\{u_n\}, R)$. Then $z\in E_x(\{v_n\}, R)$ for all $\{v_n\}\in \mathcal M$ by \eqref{int-hor}. Since $E_x(\{v_n\}, R)$ is convex by Proposition \ref{horo-convex}, the segment $tz+(1-t)q$, $0<t\leq 1$ belongs to $E_x(\{v_n\}, R)$ for all $\{v_n\}\in \mathcal M$, thus it belongs to $E_x(\{u_n\}, R)$, proving that $q\in \hbox{II}_D^H([\{u_n\}])$.

In case $\{u_n\}$ converges to $p\in \partial D$, arguing similarly to \cite[Lemma 2.3]{AR}, one can prove that $p\in \hbox{II}_D^H([\{u_n\}])$. We give a sketch of the proof for the reader's convenience. Let $t\in (0,1)$ and define $f_t^n(w):=tw+(1-t)u_n$. Note that $f_t^n: D \to D$ is holomorphic, hence $f_t^n$ decreases the Kobayashi distance of $D$. Moreover,  $f_t^n(u_n)=u_n$ and $\lim_{n\to \infty} f_t^n(z)=tz+(1-t)p\in D$. Fix $R>0$. Let $z\in E_x(\{u_n\}, R)$. Then
\begin{equation*}
\begin{split}
&\limsup_{n\to \infty}[K_D(tz+(1-t)p, u_n)-K_D(x,u_n)]\\&\leq \limsup_{n\to \infty}[K_D(f_t^n(z), f_t^n(u_n))-K_D(x,u_n)]+\limsup_{n\to \infty}[K_D(tz+(1-t)p, f_t^n(z))]<\frac{1}{2}\log R.
\end{split}
\end{equation*} 
Therefore $tz+(1-t)p\in E_x(\{u_n\}, R)$, hence $p\in \hbox{II}(\{u_n\}, R)$ for all $R>0$ and thus $p\in \hbox{II}_D^H([\{u_n\}])$.
\end{proof}

In general, if $\{u_n\}\subset D$ is an admissible sequence which does not converge and $p\in \partial D$ is in the cluster set of $\{u_n\}$, it does not hold $p\in \hbox{II}_D^H([\{u_n\}])$, as the following example shows:

\begin{ex}
Let $D=\D^2$ be the bidisc. Consider the sequences $\{v_n\}$, $\{w_n\}$ given by $v_n=(0,1-1/n)$ and  $w_n=(1-1/n,0)$, for every $n\in \N, n\geq 1$. Since $K_{\D^2}((z_1,z_2), (w_1,w_2))=\max\{K_\D(z_1,w_1), K_\D(z_2,w_2)\}$, it follows easily that for all $R>0$
\[
E_{(0,0)}(\{v_n\}, R)=\D \times E_0^\D(1, R),
\]
where $E_0^\D(1, R)=\{\zeta\in \D: |1-\zeta|^2/(1-|\zeta|^2)<R\}$. Hence $\hbox{II}_D^H([\{v_n\}])=\overline{\D}\times \{1\}$. On the other hand,
\[
E_{(0,0)}(\{w_n\}, R)=E_0^\D(1, R) \times \D,
\]
and $\hbox{II}_D^H([\{w_n\}])=\{1\}\times \overline{\D}$. 

Let $\{u_n\}$ be the sequence defined by $u_{2n-1}=v_n, u_{2n}=w_n$, $n\geq 1$. Hence $\{u_n\}$ is admissible, since, for $R>0$ 
\[
E_{(0,0)}(\{u_n\}, R)=E_{(0,0)}(\{v_n\}, R)\cap E_{(0,0)}(\{w_n\}, R)=E_0^\D(1, R) \times E_0^\D(1, R).
\]
Then $\hbox{II}_D^H([\{u_n\}])=\{(1,1)\}$, and all points in the cluster set of $\{u_n\}$, namely $(1,0)$ and $(0,1)$, do not belong to $\hbox{II}_D^H([\{u_n\}])$.
\end{ex}

The previous example shows also that $\hbox{II}(\{u_n\}, R)$ is not convex in general. 

\subsection{Convex domains biholomorphic to strongly pseudoconvex domains} We examine now the case of convex domains (with no regularity assumption on the boundary) biholomorphic to strongly pseudoconvex domains. We  start with the following general result:

\begin{lem}\label{M-admiss}
Let $M$ be a complex manifold biholomorphic to a bounded strongly pseudoconvex domain with $C^3$ boundary, $x\in M$. Let $\{u_n\}$ be an admissible sequence in $M$. Let $\{z_n\}\subset M$ be a non relatively compact sequence which is eventually contained in $E_x(\{u_n\}, R)$ for some $R>0$. Then $\{z_n\}$ is admissible and it is equivalent to $\{u_n\}$. 
\end{lem}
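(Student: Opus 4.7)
The plan is to reduce immediately to the case where $M$ itself is a bounded strongly pseudoconvex domain $D \subset \C^N$ with $C^3$ boundary. Since the Kobayashi distance is preserved by biholomorphisms, admissibility of a sequence, the horospheres $E_x(\{u_n\},R)$, the equivalence relation on admissible sequences, and relative compactness in $M$ are all transported faithfully through the given biholomorphism. So I work directly in such a $D$.

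The crux is to promote the hypotheses on $\{z_n\}$ to the Euclidean convergence $z_n\to p$, where $p\in\partial D$ is the common boundary limit associated with $\{u_n\}$. By Proposition \ref{convergenceadmsp}, since $\{u_n\}$ is admissible, there exists $p\in\partial D$ with $u_n\to p$ Euclideanly, and moreover $E_x(\{u_n\},R)\subset F_x(p,R)$. Now the non relative compactness of $\{z_n\}$ in the complete hyperbolic bounded domain $D$ gives $K_D(x,z_n)\to\infty$, and hence every Euclidean cluster point of $\{z_n\}$ must lie in $\partial D$ (a cluster point in $D$ would sit at finite Kobayashi distance from $x$). On the other hand, from $z_n\in E_x(\{u_n\},R)\subset F_x(p,R)$ eventually, any such Euclidean cluster point is forced into $\overline{F_x(p,R)}\cap\partial D$, which by Theorem \ref{intersF}(4) reduces to $\{p\}$. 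Thus $p$ is the only Euclidean cluster point of $\{z_n\}$, so $z_n\to p$.

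Once $z_n\to p$ is established, the rest of the lemma is immediate from the machinery already developed: Proposition \ref{pointstrict} gives that any sequence in $D$ converging to a boundary point is admissible, so $\{z_n\}\in\Lambda_D$; and Proposition \ref{stronglypseudo}, which asserts that any two sequences converging to the same boundary point of a bounded strongly pseudoconvex domain with $C^3$ boundary are equivalent admissible sequences, yields $\{z_n\}\sim\{u_n\}$.

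The only delicate step in this plan is the Euclidean convergence $z_n\to p$: pinning down the cluster set of $\{z_n\}$ to a single boundary point. That step is handled entirely by the ``horospheres touch the boundary at a single point'' property from Abate's theory, i.e.\ Theorem \ref{intersF}(4), combined with Kobayashi completeness. Everything else in the argument then follows routinely from earlier results of the section.
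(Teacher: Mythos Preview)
Your proof is correct and essentially identical to the paper's: reduce to $D$ via the biholomorphism, use Proposition~\ref{convergenceadmsp} and the inclusion $E_x(\{u_n\},R)\subset F_x(p,R)$ together with Theorem~\ref{intersF}(4) to force $z_n\to p$, then invoke Proposition~\ref{stronglypseudo}. Both your argument and the paper's tacitly read ``non relatively compact'' as ``compactly divergent'' when asserting that every cluster point of $\{z_n\}$ lies on $\partial D$ (strictly, non relative compactness only gives $\limsup_n K_D(x,z_n)=\infty$), a minor imprecision that is harmless for the lemma's applications.
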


\begin{proof}
Let $F: D\to M$ be a biholomorphism between a bounded strongly pseudoconvex domain with $C^3$ boundary and $M$. 

The sequence $\{F^{-1}(u_n)\}$ is admissible in $D$. By Proposition \ref{convergenceadmsp}, there exists $p\in \partial D$ such that $\{F^{-1}(u_n)\}$ converges to $p$. Since $\{z_n\}$ is eventually contained in $E^M_x(\{u_n\}, R)$, then $\{F^{-1}(z_n)\}$ is eventually contained in $E^D_{F^{-1}(x)}(\{F^{-1}(u_n)\}, R)$. Moreover, $\{F^{-1}(z_n)\}$ is not relatively compact in $D$, as $\{z_n\}$ is not in $M$. Therefore every limit of $\{F^{-1}(z_n)\}$ has to be contained in 
\begin{equation}\label{uno-in-p}
\overline{E^D_{F^{-1}(x)}(\{F^{-1}(u_n)\}, R)}\cap \partial D\subset \overline{F^D_{F^{-1}(x)}(p,R)}\cap \partial D=\{p\}
\end{equation}
by Theorem \ref{intersF}.(4). 

Therefore $\{F^{-1}(z_n)\}$ is converging to $p$. By Proposition \ref{stronglypseudo}, the sequence $\{F^{-1}(z_n)\}$ is admissible in $D$ and equivalent to $\{F^{-1}(u_n)\}$. Hence $\{z_n\}$ is admissible in $M$ and equivalent to $\{u_n\}$.
\end{proof}

As a corollary we have the following result

\begin{prop}\label{P:I-and-II}
Let $D\subset \C^N$ be a bounded convex domain. Assume $D$ is biholomorphic to a bounded strongly pseudoconvex domain with $C^3$ boundary. Let $\underline{x}\in \partial_H D$. Let $\{u_n\}$ be any admissible sequence in $D$ representing $\underline{x}$. Then for every $R>0$ it holds
\begin{equation}\label{I-and-II}
\hbox{II}(\{u_n\}, R)=\hbox{II}_D^H(\underline{x}).
\end{equation}
\end{prop}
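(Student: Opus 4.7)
The plan is to establish the two inclusions separately, noting that the nontrivial direction will reduce to results already available in the excerpt.

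For the easy direction $\mathrm{II}_D^H(\underline{x}) \subseteq \mathrm{II}(\{u_n\},R)$, I would simply use monotonicity of horospheres: by Proposition \ref{propertyhoro}(2), for $R' \leq R$ we have $\overline{E_x(\{u_n\},R')} \subseteq \overline{E_x(\{u_n\},R)}$, and intersecting over all $R' > 0$ gives $\mathrm{II}_D^H(\underline{x}) \subseteq \overline{E_x(\{u_n\},R)} \cap \partial D = \mathrm{II}(\{u_n\},R)$.

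For the reverse inclusion $\mathrm{II}(\{u_n\},R) \subseteq \mathrm{II}_D^H(\underline{x})$, I would take $p \in \mathrm{II}(\{u_n\},R)$ and pick a sequence $\{z_k\} \subset E_x(\{u_n\},R)$ with $z_k \to p \in \partial D$. Since $p \in \partial D$ and $D$ is bounded, $\{z_k\}$ is non relatively compact in $D$. Now Lemma \ref{M-admiss} applies (as $D$ is biholomorphic to a $C^3$ bounded strongly pseudoconvex domain) and yields that $\{z_k\}$ is itself admissible and equivalent to $\{u_n\}$; in particular $[\{z_k\}] = \underline{x}$ in $\partial_H D$.

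At this point $\{z_k\}$ is an admissible sequence in the bounded convex domain $D$ that converges to $p \in \partial D$, so the second assertion of Lemma \ref{whois} gives $p \in \mathrm{II}_D^H([\{z_k\}])$. Finally, one checks that $\mathrm{II}_D^H$ depends only on the equivalence class (a direct consequence of Definition \ref{equiv-def}: given $R > 0$, pick $R'$ with $E_x(\{u_n\},R') \subset E_x(\{z_k\},R)$, so after closing up and intersecting over $R$ both intersections agree). Hence $\mathrm{II}_D^H([\{z_k\}]) = \mathrm{II}_D^H(\underline{x})$ and $p \in \mathrm{II}_D^H(\underline{x})$, which proves the reverse inclusion.

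The only delicate point is the transition in the middle paragraph, where we need the sequence $\{z_k\} \subset E_x(\{u_n\},R)$ built from a boundary limit to inherit both admissibility and equivalence with $\{u_n\}$; but this is exactly the content of Lemma \ref{M-admiss}, whose proof made essential use of Theorem \ref{intersF}(4) applied inside the strongly pseudoconvex model. Everything else is formal manipulation of nested convex horospheres together with the convexity trick of Lemma \ref{whois} (which pulls $p$ into each horosphere via the holomorphic map $w \mapsto tw + (1-t)u_n$).
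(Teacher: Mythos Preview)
Your proof is correct and follows essentially the same route as the paper's own argument: both directions are handled identically, with the nontrivial inclusion obtained by picking a sequence in $E_x(\{u_n\},R)$ converging to $p$, invoking Lemma~\ref{M-admiss} to see it is admissible and equivalent to $\{u_n\}$, and then applying the second part of Lemma~\ref{whois}. Your added justification that $\hbox{II}_D^H$ is well-defined on equivalence classes is a detail the paper leaves implicit (it is built into the notation and Remark~\ref{simple}), but it is correct and harmless to spell out.
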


\begin{proof}
By definition, $\hbox{II}_D^H(\underline{x})\subseteq \hbox{II}(\{u_n\}, R)$. Let now $p\in \hbox{II}(\{u_n\}, R)$. Fix $x\in D$. Then there exists a sequence $\{p_n\}\subset E_x(\{u_n\}, R)$ such that $\lim_{n\to \infty}p_n=p$. By Lemma \ref{M-admiss}, $\{p_n\}$ is an admissible sequence in $D$ and it is equivalent to $\{u_n\}$. By Lemma \ref{whois} it follows then that $p\in \hbox{II}_D^H(\underline{x})$. 
\end{proof}

The previous result allows us to generalize Lemma \ref{squeeze} to convex domains biholomorphic to strongly pseudoconvex domains:

\begin{lem}\label{squeezeconvex}
Let $D\subset \C^N$ be a bounded convex domain. Assume $D$ is biholomorphic to a bounded strongly pseudoconvex domain with $C^3$ boundary. Let $\{\underline{x}_j\}\subset \partial_H D$ be a sequence and let $V=\overline{\bigcup_j \hbox{II}_D^H(\underline{x}_j)}$. Then for every open neighborhood $U$ of $V$ there exists $R_0>0$ such that for every $0<R<R_0$ and for every admissible sequence $\{u_n^j\}$ representing $\underline{x}_j$ for some $j$, it holds
\[
E_x(\{u_n^j\}, R)\subset U.
\]
\end{lem}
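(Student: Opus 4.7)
I would argue by contradiction, transferring the question to the strongly pseudoconvex model where Lemma \ref{squeeze} applies. Fix a biholomorphism $F:D^*\to D$ with $D^*$ bounded strongly pseudoconvex with $C^3$ boundary, set $x_0:=F^{-1}(x)$, and for each index $j$ let $p_j:=\Theta(\hat F^{-1}(\underline{x}_j))\in\partial D^*$ be the boundary point corresponding to $\underline{x}_j$ under the homeomorphism of Theorem \ref{main-pseudo} composed with the extension $\hat F^{-1}$. If the claim failed, one would find an open neighborhood $U$ of $V$ and, for each $N\in\mathbb N$, an index $j_N$, a radius $R_N<1/N$, an admissible sequence $\{u_n^{j_N}\}_n$ representing $\underline{x}_{j_N}$, and a point $z_N\in E_x(\{u_n^{j_N}\},R_N)\setminus U$. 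After passing to a subsequence, $z_N\to z^*\in\overline D\setminus U$; the possibility $z^*\in D$ is ruled out by Proposition \ref{outsideK}, so $z^*\in\partial D$.

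Set $\tilde z_N:=F^{-1}(z_N)$. By Proposition \ref{convergenceadmsp} the pulled-back sequence $\{F^{-1}(u_n^{j_N})\}_n$ converges to $p_{j_N}$, and together with the inclusion $E\subset F$ in the strongly pseudoconvex $D^*$ this gives $\tilde z_N\in F^{D^*}_{x_0}(p_{j_N},R_N)$. After a further subsequence, $p_{j_N}\to p^*\in W:=\overline{\{p_j\}}\subset\partial D^*$. For any open neighborhood $\widetilde U$ of $p^*$ in $\overline{D^*}$, the compact set $\{p^*\}\cup\{p_{j_N}\}_{N\geq N_0}$ lies in $\widetilde U$ once $N_0$ is large, and Lemma \ref{squeeze} applied in $D^*$ produces $R_0>0$ with $F^{D^*}_{x_0}(p_{j_N},R)\subset\widetilde U$ whenever $N\geq N_0$ and $R<R_0$. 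Since $R_N\to 0$, this forces $\tilde z_N\to p^*$ in the Euclidean topology. Proposition \ref{pointstrict} then shows $\{\tilde z_N\}$ is admissible in $D^*$ with class $\Theta^{-1}(p^*)$, so $\{z_N\}$ is admissible in $D$ with class $\underline{x}^*:=\hat F(\Theta^{-1}(p^*))$; since $z_N\to z^*$ in the convex $D$, Lemma \ref{whois} yields $z^*\in\hbox{II}_D^H(\underline{x}^*)$.

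If $p^*=p_{j_0}$ for some $j_0$, then $\underline{x}^*=\underline{x}_{j_0}$, so $z^*\in\hbox{II}_D^H(\underline{x}_{j_0})\subset V$, contradicting $z^*\notin U\supset V$. The main obstacle is the complementary subcase, in which $p^*$ is a genuine accumulation point of $\{p_{j_N}\}$ not lying in $\{p_j\}$: here $\underline{x}_{j_N}\to\underline{x}^*$ in the horosphere topology, but to conclude one still needs $\hbox{II}_D^H(\underline{x}^*)\subset\overline{\bigcup_N\hbox{II}_D^H(\underline{x}_{j_N})}\subset V$, a lower semicontinuity property of the cluster set $p\mapsto C(F,p)=\hbox{II}_D^H(\hat F(\Theta^{-1}(p)))$ at $p^*\in W$. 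I would attack it by combining Proposition \ref{P:I-and-II}, which identifies $\hbox{II}_D^H(\underline{x}_j)$ with $\overline{E_x(\{u_n^j\},R)}\cap\partial D$ for every $R$; a Kuratowski-type inclusion, namely that each $z\in E^{D^*}_{x_0}(p^*,R)$ lies in $F^{D^*}_{x_0}(p_{j_N},R)$ for $N$ sufficiently large (a consequence of the continuity on $D^*$ of the Busemann-type quantity $K_{D^*}(z,w)-K_{D^*}(x_0,w)$); and a diagonal construction starting from $q\in\hbox{II}_D^H(\underline{x}^*)$ realised via $F(\tilde w_m)\to q$ with $\tilde w_m\to p^*$ along a cone, selecting $N_m\to\infty$ with $\tilde w_m\in F^{D^*}_{x_0}(p_{j_{N_m}},1/m)$, and using Lemma \ref{M-admiss} to extract boundary points $q_m\in\hbox{II}_D^H(\underline{x}_{j_{N_m}})$ with $q_m\to q$.
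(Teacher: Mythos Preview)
Your approach is much more elaborate than the paper's. The paper argues by contradiction exactly as you begin to, obtaining $z_n\to z_0\in\partial D\setminus U$ with each $z_n\in E_x(\{u_m^{(n)}\},1/n)$ for some representative of some $\underline{x}_{j_n}$; then for any fixed $R>0$ it has $z_n$ in $\bigcup_j\bigcup_{\{u_m\}\in\mathcal A_j}E_x(\{u_m\},R)$ for all large $n$, so $z_0\in\overline{\bigcup_j\bigcup_{\{u_m\}\in\mathcal A_j}E_x(\{u_m\},R)}\cap\partial D$, and this set is identified with $V$ directly via Proposition~\ref{P:I-and-II} (each horosphere's closure meets $\partial D$ exactly in $\hbox{II}_D^H(\underline{x}_j)$, independently of $R$ and of the chosen representative), giving the contradiction $z_0\in V\cap(\partial D\setminus U)=\emptyset$. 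There is no passage to the model $D^*$, no use of Lemma~\ref{squeeze}, and no case split on whether the limit point $p^*$ lies among the $p_j$.

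Your Case~b is a genuine gap, and your proposed fix does not close it. The diagonal construction aims to produce, for $q\in\hbox{II}_D^H(\underline{x}^*)$, points $q_m\in\hbox{II}_D^H(\underline{x}_{j_{N_m}})$ with $q_m\to q$; but the step ``using Lemma~\ref{M-admiss} to extract $q_m$'' has no force. Lemma~\ref{M-admiss} concerns a non-relatively-compact sequence eventually contained in a \emph{single} horosphere $E_x(\{u_n\},R)$, whereas your points $F(\tilde w_m)$ sit one-by-one in \emph{different} horospheres attached to different classes $\underline{x}_{j_{N_m}}$, so the lemma does not apply, and nothing else in your sketch forces $F(\tilde w_m)$ to be close to $\hbox{II}_D^H(\underline{x}_{j_{N_m}})$. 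The Kuratowski-type inclusion you mention only puts $\tilde w_m$ back into a big horosphere $F^{D^*}_{x_0}(p_{j_N},R)$, which is where you started. In the end you are left needing precisely the statement that a boundary limit of interior points lying in horospheres of the classes $\underline{x}_j$ must land in $V$---and that is exactly what the paper extracts in one line from Proposition~\ref{P:I-and-II}, bypassing your detour through $D^*$ entirely.
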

\begin{proof}
We argue by contradiction. Let $\mathcal A_j$ denote the set of all admissible sequences in $D$ which represent $\underline{x}_j$.  If the result is not true, for every $n\in \N$ there exists  $z_n\in D$ such that 
\[
z_n\in \bigcup_j \bigcup_{\{u_m\}\in \mathcal A_j} E(\{u_m\}, \frac{1}{n})\cap (D\setminus U).
\]
 We can assume that $z_n\to z_0$ for some $z_0\in \overline{D}$. By Proposition \ref{outsideK}, $z_0\in \partial D$. In particular, given $R>1/n$, it holds $z_n\in \bigcup_j \bigcup_{\{u_m\}\in \mathcal A_j} E(\{u_m\}, R)$ for every $n\in \N$. Hence,
\[
z_0\in \overline{\bigcup_j \bigcup_{\{u_m\}\in \mathcal A_j} E(\{u_m\}, R)}\cap \partial D=V,
\]
by Proposition \ref{P:I-and-II}. Therefore, $z_0\in V\cap (\partial D\setminus U)=\emptyset$, a contradiction.
\end{proof}

As in the strongly pseudoconvex case, the previous lemma allows to relate Euclidean topology with horosphere topology: 

\begin{cor}\label{converge-bene}
Let $D\subset \C^N$ be a bounded convex domain. Assume $D$ is biholomorphic to a bounded strongly pseudoconvex domain with $C^3$ boundary. Also, assume that for every $\underline{x}\in \partial_H D$ there exists $p_{\underline{x}}\in \partial D$ such that  $\hbox{II}_D^H(\underline{x})=\{p_{\underline{x}}\}$. If $\{\underline{x}_j\}\subset \partial_H D$ is a sequence converging to $\underline{x}\in \partial_H D$ in the horosphere topology, then $\lim_{j\to \infty}p_{\underline{x}_j}=p_{\underline{x}}$ in the Euclidean topology. 
\end{cor}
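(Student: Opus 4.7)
The plan is to argue by contradiction, using Lemma~\ref{squeezeconvex} to separate horospheres of the $\underline{x}_j$ from horospheres of $\underline{x}$ once the corresponding Euclidean points stay bounded away from $p_{\underline{x}}$. This mirrors the argument in Case~2 of the proof of Theorem~\ref{main-pseudo}, with Lemma~\ref{squeeze} replaced by its convex analogue Lemma~\ref{squeezeconvex}.

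More precisely, suppose that $\{p_{\underline{x}_j}\}$ does not converge to $p_{\underline{x}}$ in the Euclidean topology. By compactness of $\overline{D}$, I may extract a subsequence $\{p_{\underline{x}_{j_k}}\}$ converging to some $q\in\overline{D}$ with $q\neq p_{\underline{x}}$. Since $q,p_{\underline{x}}\in\partial D$, I can choose disjoint open neighborhoods $U'\supset V':=\{p_{\underline{x}_{j_k}}:k\in\N\}\cup\{q\}$ and $U''\supset V'':=\{p_{\underline{x}}\}$ in $\C^N$. Note that $V'$ is closed and that $V'=\overline{\bigcup_k \hbox{II}^H_D(\underline{x}_{j_k})}$ by the assumption that the horosphere impressions are singletons; similarly $V''=\overline{\hbox{II}^H_D(\underline{x})}$.

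Next, I apply Lemma~\ref{squeezeconvex} twice: once to the sequence $\{\underline{x}_{j_k}\}_k$ with the open neighborhood $U'$ of $V'$, producing some $R'_0>0$, and once to the constant sequence $\underline{x}$ with the open neighborhood $U''$ of $V''$, producing some $R''_0>0$. Setting $R_0:=\min\{R'_0,R''_0\}$, for every $0<R<R_0$ any horosphere $E_x(\{v_n\},R)$ with $\{v_n\}$ admissible and representing some $\underline{x}_{j_k}$ is contained in $U'$, while any horosphere $E_x(\{w_n\},R)$ with $\{w_n\}$ admissible and representing $\underline{x}$ is contained in $U''$.

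Now I invoke the hypothesis that $\{\underline{x}_j\}$ converges to $\underline{x}$ in the horosphere topology: by Definition~\ref{conv-out-out}, there exist admissible sequences $\{u^j_n\}$ representing $\underline{x}_j$ and $\{u_n\}$ representing $\underline{x}$ such that for every $R>0$ one has $E_x(\{u^j_n\},R)\cap E_x(\{u_n\},R)\neq\emptyset$ for $j$ large enough. Picking $R<R_0$ and restricting to the subsequence $\{j_k\}$, the left-hand horosphere lies in $U'$ and the right-hand one in $U''$, contradicting $U'\cap U''=\emptyset$. The only step requiring real care is the verification that Lemma~\ref{squeezeconvex} really applies to the constant sequence and to the extracted subsequence (which is immediate from the statement of that lemma, as $V$ is allowed to be the closure of any union of impressions); everything else is a straightforward separation argument.
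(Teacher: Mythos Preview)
Your proof is correct and follows essentially the same approach as the paper's own argument: both argue by contradiction, pass to a subsequence of $\{p_{\underline{x}_j}\}$ converging to some $q\neq p_{\underline{x}}$, separate the corresponding principal parts by disjoint open sets, and then invoke Lemma~\ref{squeezeconvex} to force the horospheres into those disjoint sets, contradicting Definition~\ref{conv-out-out}. The only difference is cosmetic: the paper applies Lemma~\ref{squeezeconvex} in a single sentence to both families at once, whereas you spell out the two applications separately.
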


\begin{proof}
Assume the conclusion of the corollary is not true and, possibly up to extracting subsequences, assume that $p_{\underline{x}_j}\to q$ for some $q\in \partial D\setminus\{p_{\underline{x}}\}$. Let $V=\overline{\cup_j \{p_{\underline{x}_j}\}}$. Let $U_0, U_1$ be two open sets in $\C^N$ such that $V\subset U_0$, $\{p_{\underline{x}}\}\subset U_1$ and $U_0\cap U_1=\emptyset$. By Lemma \ref{squeezeconvex}, there exists $R_0>0$ such that for all $0<R<R_0$ and all admissible sequences $\{u^j_n\}$ representing $\underline{x}_j$ it holds $E_x(\{u_n^j\}, R)\subset U_0$ and moreover, for all admissible sequences $\{u_n\}$ representing $\underline{x}$ it holds $E_x(\{u_n\}, R)\subset U_1$. But then, \eqref{def-horo-converge} can never be satisfied for $0<R<R_0$, and   $\underline{x}_j$ cannot converge to $\underline{x}$ in the horosphere topology. A contradiction.
\end{proof}

\begin{theo}\label{I-for-convex}
Let $D\subset \C^N$ be a bounded convex domain. Assume $D$ is biholomorphic to a bounded strongly pseudoconvex domain with $C^3$ boundary. Also, assume that for every $\underline{x}\in \partial_H D$ there exists $p_{\underline{x}}\in \partial D$ such that  $\hbox{II}_D^H(\underline{x})=\{p_{\underline{x}}\}$. Then for every $\underline{x}\in \partial_H D$ it holds
\[
\hbox{I}_D^H(\underline{x})=\hbox{II}_D^H(\underline{x})=\{p_{\underline{x}}\}.
\]
\end{theo}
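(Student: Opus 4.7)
The inclusion $\hbox{II}^H_D(\underline{x})\subseteq \hbox{I}^H_D(\underline{x})$ is automatic, since by Definition \ref{E-lim} any $E$-converging sequence converges in the horosphere topology. So the content of the statement is the reverse inclusion: given $p\in \hbox{I}^H_D(\underline{x})$, we must show $p=p_{\underline{x}}$. The plan is to unpack the definition of horosphere convergence for the witness sequence $\{w_m\}\subset D$ provided by Definition \ref{impr-def}, and use the hypothesis together with Corollary \ref{converge-bene} and Lemma \ref{squeezeconvex} to force $w_m$ to approach $p_{\underline{x}}$ Euclideanly.

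More precisely, pick $\{w_m\}\subset D$ with $w_m\to\underline{x}$ in the horosphere topology and $w_m\to p$ in the Euclidean topology. By Definition \ref{conv-int-out} there exist admissible sequences $\{u_n\}$ with $[\{u_n\}]=\underline{x}$ and $\{v^m_n\}$ such that, for every $R>0$, one has $w_m\in E_x(\{v^m_n\},R)$ and $E_x(\{v^m_n\},R)\cap E_x(\{u_n\},R)\neq\emptyset$ for all $m$ large enough. Set $\underline{y}_m:=[\{v^m_n\}]\in\partial_H D$. By Remark \ref{conv-in-like-out}, the sequence $\{\underline{y}_m\}$ converges to $\underline{x}$ in the horosphere topology, so the standing assumption plus Corollary \ref{converge-bene} gives $p_{\underline{y}_m}\to p_{\underline{x}}$ in the Euclidean topology.

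Now fix an arbitrary open Euclidean neighborhood $U$ of $p_{\underline{x}}$. Choose $m_0$ so that $p_{\underline{y}_m}\in U$ for all $m\geq m_0$; then
\[
V:=\overline{\bigcup_{m\geq m_0}\hbox{II}^H_D(\underline{y}_m)}=\overline{\{p_{\underline{y}_m}:m\geq m_0\}}=\{p_{\underline{y}_m}:m\geq m_0\}\cup\{p_{\underline{x}}\}\subset U.
\]
Applying Lemma \ref{squeezeconvex} to the sequence $\{\underline{y}_m\}_{m\geq m_0}$ and the neighborhood $U$ of $V$, we obtain $R_0>0$ such that $E_x(\{v^m_n\},R)\subset U$ for every $0<R<R_0$ and every $m\geq m_0$. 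Combining this with the horosphere convergence $w_m\to\underline{x}$ (applied with some fixed $R<R_0$), we conclude that $w_m\in U$ for all $m$ large enough. Since $w_m\to p$ Euclideanly, $p\in\overline{U}$. Since $U$ was an arbitrary open neighborhood of $p_{\underline{x}}$, it follows that $p=p_{\underline{x}}$, completing the proof.

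The only delicate step is step three, where one must be careful that the set $V$ built from the points $p_{\underline{y}_m}$ is contained in the chosen neighborhood $U$; this is exactly why Corollary \ref{converge-bene} is invoked beforehand and why the tail $m\geq m_0$ is chosen. Everything else is a direct chaining of the earlier results in the section, with no new analytical ingredient required.
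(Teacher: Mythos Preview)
Your proof is correct and follows essentially the same approach as the paper: both unpack Definition \ref{conv-int-out}, use Remark \ref{conv-in-like-out} together with Corollary \ref{converge-bene} to get $p_{\underline{y}_m}\to p_{\underline{x}}$, and then invoke Lemma \ref{squeezeconvex} to trap the horospheres $E_x(\{v^m_n\},R)$ near $p_{\underline{x}}$. The only cosmetic difference is that the paper phrases the endgame as a contradiction (separating a hypothetical $q\neq p_{\underline{x}}$ from $p_{\underline{x}}$ by disjoint open sets), whereas you argue directly that $w_m$ eventually enters every neighborhood of $p_{\underline{x}}$.
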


\begin{proof}
We argue by contradiction. Let $\{w_n\}\subset D$ be a sequence converging to $\underline{x}$ in the horosphere topology. Assume that $\{w_n\}$ converges to $q\in \partial D$ in the Euclidean topology, with $q\neq  p_{\underline{x}}$. By definition of convergence in the horosphere topology, there exist admissible sequences $\{u_n^j\}$ and $\{u_n\}$ in $D$ such that $\{u_n\}$ represents $\underline{x}$, and for every $R>0$ there exists $m_R$ such that for every $m\geq m_R$ it holds $w_m\in E_x(\{u_n^m\}, R)$ and $E_x(\{u_n^m\}, R)\cap E_x(\{u_n\}, R)\neq\emptyset$. Let $\underline{x}_m:=[\{u_n^m\}]\in \partial_H D$. By Remark \ref{conv-in-like-out}, the sequence $\{\underline{x}_j\}$ converges in the horosphere topology to $\underline{x}$. Therefore, by Corollary \ref{converge-bene}, the sequence $\{p_{\underline{x}_j}\}$ converges to $p_{\underline{x}}$ in the Euclidean topology. 

Let $U_0$ and $U_1$ be two open sets in $\C^N$ such that $q\in U_0$, $p_{\underline{x}}\in U_1$ and $U_0\cap U_1=\emptyset$. Without loss of generality, we can assume that $\{w_n\}\subset U_0$ and $\{p_{\underline{x}_j}\}\subset U_1$. By Lemma \ref{squeezeconvex}, there exists $R_0>0$ such that $E_x(\{u_n^m\}, R)\subset U_1$ for all $0<R<R_0$ and all $m\in \N$.

Therefore, given $0<R<R_0$, for $m>m_R$, we have $w_m\in U_0\cap E_x(\{u_n^m\}, R)\subset U_0\cap U_1=\emptyset$, a contradiction.
\end{proof}

As shown by the previous result, it is important to see which bounded convex domains biholomorphic to bounded strongly pseudoconvex domains have the property that $\hbox{II}_D^H(\underline{x})$ is a point for every $\underline{x}\in \partial_H D$. We conjecture that this is always the case, but presently we are able to prove it for bounded strictly $\C$-linearly convex domains and in case of convex domains biholomorphic to strongly convex domains. In order to state the result, we need a definition:

\begin{defi}
Let $D \subset \C^N$ be a bounded convex domain, $p\in \partial D$. A {\sl complex supporting functional} at $p$ is a $\C$-linear map $\sigma:\C^N \to \C$ such that $\Re \sigma(z)<\Re \sigma (p)$ for all $z\in D$. A {\sl complex supporting hyperplane} for $D$ at $p$ is an affine complex hyperplane $L$ of the form $L=p+\ker \sigma=\{z\in \C^N: \sigma(z)=\sigma(p)\}$ where $\sigma$ is a complex supporting functional at $p$. Let $\mathcal L_p$ denote the set of all complex supporting hyperplanes at $p$. We set
\[
\hbox{Ch}(p)=\bigcap_{L\in \mathcal L_p}L\cap \overline{D}. 
\]
\end{defi}
Clearly, $\hbox{Ch}(p)$ is a closed convex set containing $p$. 

\begin{defi}
A bounded convex domain $D\subset \C^N$ is {\sl strictly $\C$-linearly convex} if for every $p\in \partial D$ it holds $\hbox{Ch}(p)=\{p\}$.
\end{defi}

\begin{prop}\label{strict-biholo-convex}
Let $D\subset \C^N$ be a bounded strictly $\C$-linearly convex domain. Assume $D$ is biholomorphic to a bounded strongly pseudoconvex domain with $C^3$ boundary. Then for every $\underline{x}\in \partial_H D$ there exists a unique $p\in \partial D$ such that \begin{equation}\label{st-II-one}
\hbox{I}_D^H(\underline{x})=\hbox{II}_D^H(\underline{x})=\{p\}.
\end{equation}
\end{prop}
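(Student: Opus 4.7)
My plan is to use Theorem~\ref{I-for-convex} to reduce to showing that $K:=\hbox{II}^H_D(\underline{x})$ is a singleton for every $\underline{x}\in\partial_H D$; the conclusion $\hbox{I}^H_D(\underline{x})=\hbox{II}^H_D(\underline{x})=\{p\}$ will then follow. Fix such $\underline{x}$ with an admissible representative $\{u_n\}$; by Remark~\ref{simple}, $K$ is a non-empty closed convex subset of $\partial D$. The strategy is: fix any $p\in K$ and prove $K\subseteq\hbox{Ch}(p)$; strict $\C$-linear convexity then yields $K=\{p\}$. In turn, this amounts to showing that every complex supporting functional $\sigma$ at $p$ is identically equal to $\sigma(p)$ on $K$.

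Let $F:D'\to D$ be a biholomorphism with $D'$ bounded strongly pseudoconvex with $C^3$ boundary, and set $u_n':=F^{-1}(u_n)$. Proposition~\ref{convergenceadmsp} applied in $D'$ produces a unique $p'\in\partial D'$ with $u_n'\to p'$. The first step is to identify $K$ with the cluster set
\[
\Gamma(F;p'):=\{q\in\partial D:\exists \{z_k'\}\subset D',\ z_k'\to p',\ F(z_k')\to q\}.
\]
For $K\subseteq\Gamma(F;p')$: given $q\in K$, pick $\{z_k\}\subset D$ E-converging to $\underline{x}$ with $z_k\to q$; the pullbacks $\{F^{-1}(z_k)\}$ then E-converge in $D'$ to $[\{u_n'\}]$, and Proposition~\ref{strongly-pseudo-boundary} (applied in the strongly pseudoconvex $D'$) forces $F^{-1}(z_k)\to p'$. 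For $K\supseteq\Gamma(F;p')$: if $\{z_k'\}\to p'$ with $F(z_k')\to q$, Proposition~\ref{stronglypseudo} says $\{z_k'\}$ is admissible in $D'$ and equivalent to $\{u_n'\}$, so $\{F(z_k')\}$ is admissible in $D$ equivalent to $\{u_n\}$ and converges to $q$; Lemma~\ref{whois} then places $q$ in $K$.

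To show $\sigma\equiv\sigma(p)$ on $K$, fix a complex supporting $\sigma$ at $p$ and consider the bounded holomorphic map $g:=\sigma\circ F:D'\to\mathbb H$, where $\mathbb H:=\{\zeta\in\C:\Re\zeta<\Re\sigma(p)\}$. Since $p\in K=\Gamma(F;p')$, there exists $\{z_k'\}\to p'$ with $g(z_k')\to\sigma(p)\in\partial\mathbb H$; this witnesses a finite Julia coefficient for $g$ at the boundary point $p'$ of $\partial D'$. A several-variable Julia--Wolff--Carath\'eodory analysis on the strongly pseudoconvex $D'$---exploiting Theorem~\ref{intersF} (Abate's small horospheres at $p'$ touch $\partial D'$ only at $p'$) and the Kobayashi boundary asymptotics of Lemma~\ref{Kobest}---then shows that $g$ carries admissible horospheres of $D'$ at $p'$ into horospheres of $\mathbb H$ at $\sigma(p)$ with controlled (linearly shrinking) radius. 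For any $q\in K=\Gamma(F;p')$, choose as in the previous paragraph an admissible $\{w_n\}\subset D$ equivalent to $\{u_n\}$ with $w_n\to q$, originating from $\{w_n'\}\to p'$ in $D'$; passing to a subsequence if necessary, and using Proposition~\ref{pointstrict} applied in $D'$, $\{w_n'\}$ may be taken to lie in shrinking admissible horospheres at $p'$, so that $g(w_n')=\sigma(w_n)$ lies in horospheres of $\mathbb H$ at $\sigma(p)$ of radius tending to zero. Hence $\sigma(w_n)\to\sigma(p)$; since also $\sigma(w_n)\to\sigma(q)$, we conclude $\sigma(q)=\sigma(p)$.

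\textbf{The main obstacle} is the Julia--Wolff--Carath\'eodory step: transforming the finite Julia coefficient of $g$ at $p'$ (witnessed only by a particular sequence) into a horosphere-to-horosphere inclusion valid for every admissible horosphere at $p'$, and then arranging the sequences $\{w_n'\}$ to actually lie inside such shrinking horospheres. This is a several-variable extension of Julia's lemma at a strongly pseudoconvex boundary point, requiring Theorem~\ref{intersF} together with the lower Kobayashi estimate of Lemma~\ref{Kobest} and a careful transfer of the ``non-tangential'' approach structure from $D'$ back through $F$.
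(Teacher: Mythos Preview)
Your route is genuinely different from the paper's, and the ``main obstacle'' you flag is a real gap, not a technicality. The Julia--Wolff--Carath\'eodory step requires the hypothesis $\liminf_{w\to p'}\bigl[K_{D'}(x',w)-K_{\mathbb H}(g(x'),g(w))\bigr]<\infty$, and the mere existence of a sequence $z_k'\to p'$ with $g(z_k')\to\sigma(p)\in\partial\mathbb H$ does not supply it: one needs the rate at which $g(z_k')$ approaches $\partial\mathbb H$ to be comparable (in Kobayashi terms) to the rate at which $z_k'$ approaches $\partial D'$, and nothing you have written controls this. Concretely, $K_{D'}(x',z_k')-K_{\mathbb H}(g(x'),g(z_k'))=K_D(x,z_k)-K_{\mathbb H}(\sigma(x),\sigma(z_k))$, and without a suitable upper estimate on $K_D(x,z_k)$ (which is not available for a general bounded convex $D$) this difference can diverge. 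A secondary issue: your appeal to Proposition~\ref{pointstrict} to place an arbitrary $\{w_n'\}\to p'$ inside shrinking horospheres is incorrect---that proposition only says that cone regions lie in horospheres, not that every convergent sequence does. (The conclusion you want there is in fact true by a different route: since $q\in\hbox{II}^H_D(\underline{x})$, by definition $q$ is the Euclidean limit of a sequence E-converging to $\underline{x}$ in $D$, and its $F^{-1}$-image E-converges in $D'$; but your stated reasoning does not give this.)

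The paper sidesteps Julia--Wolff--Carath\'eodory entirely. Arguing by contradiction with distinct $p,q\in\hbox{II}^H_D(\underline{x})$, it looks at the complex line $L=\C(p-q)+q$. If $L\cap D\neq\emptyset$, it slices a horosphere by $L$ to obtain a planar convex domain $\Delta$ whose boundary contains the segment $[p,q]$; the Riemann map $\varphi:\D\to\Delta$ extends analytically across an arc $A$ mapped to $[p,q]$ (Schwarz reflection), and a generic linear projection of $F^{-1}\circ\varphi$ is a bounded holomorphic function on $\D$ with constant radial limit along $A$ (every sequence in $\Delta$ converging to $[p,q]$ has $F^{-1}$-image clustering only at $p'$, by \eqref{uno-in-p}), contradicting the classical identity principle for radial limits. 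If $L\cap D=\emptyset$, a short convex-geometry argument shows $L$ lies in every complex supporting hyperplane at any interior point of $[p,q]$, forcing $[p,q]\subset\hbox{Ch}(\xi)$ and contradicting strict $\C$-linear convexity directly. This argument is more elementary, uses only one-variable tools (Riemann mapping, Schwarz reflection, Fatou), and never needs a Julia-type dilation hypothesis.
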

\begin{proof}
If we prove that $\hbox{II}_D^H(\underline{x})=\{p\}$, then the result follows from Theorem \ref{I-for-convex}.

Let $\underline{x}\in \partial_H D$.  Suppose by contradiction that $p, q\in \hbox{II}_D^H(\underline{x})$.
By hypothesis, there exist $\Omega\subset \C^N$ a bounded strongly pseudoconvex domain with $C^3$ boundary and a biholomorphism $F:\Omega \to D$.

Let $\{u_n\}$ be an admissible sequence representing $\underline{x}$. For every $R>0$, $\hbox{II}(\{u_n\}, R)=\hbox{II}_D^H(\underline{x})$ by \eqref{I-and-II}. Therefore,   since $\hbox{II}_D^H(\underline{x})$  is convex,  the real segment $[p,q]$ joining $p$ and $q$ is contained in $\hbox{II}(\{u_n\}, R)$. 

 Let $v=p-q$ and let $L:=\C(p-q)+q$. There are two possibilities: either $L\cap D\neq \emptyset$ or $L\cap D=\emptyset$. 
 
In case  $L\cap D\neq\emptyset$, since $\cup_{R>0}E^D_x(\{u_n\}, R)=D$, there exists $R>0$ such that $E^D_x(\{u_n\}, R)\cap L\neq \emptyset$. By convexity of $E^D_x(\{u_n\}, R)$ (see Proposition \ref{horo-convex}), $\Delta:=L\cap E^D_x(\{u_n\}, R)$ is a convex domain in $L$ whose boundary contains the segment $[p,q]$.
Hence by the uniformization theorem, there exists a biholomorphism $\varphi:\D \to \Delta$ and, by the Schwarz reflection principle, there exists an arc $A\subset \partial \D$ such that $\varphi$ extends analytically on $A$ and $\varphi(A)=[p,q]$. Consider the map $F^{-1}|_\Delta:\Delta\to \Omega$. Since it is not constant, there exists a linear projection $\pi: \C^N \to \C$ such that $\pi \circ F^{-1}_\Delta:\Delta\to \C$ is not constant. Therefore, the map $g:=\pi\circ F^{-1}\circ \varphi:\D \to \C$ is a holomorphic bounded map. We claim that there exists $a\in \C$ such that
\begin{equation}\label{Fatou}
\lim_{r\to 1^-}g(r\zeta)=a \quad \forall \zeta\in A,
\end{equation}
which, by Fatou's lemma, implies that $g$ is constant, reaching a contradiction. 

In order to prove \eqref{Fatou},  we just note that  if $\{w_n\}\subset \Delta$ is a sequence converging to a point $\zeta\in [p,q]$, since $\{w_n\}\subset E^D_x(\{u_n\}, R)$, it follows that $\{F^{-1}(w_n)\}$ is a non-relatively compact sequence in $\Omega$ contained in the horosphere $E^{\Omega}_{F^{-1}(x)}(\{F^{-1}(u_n)\}, R)$ and thus there exists a point $u\in \partial \Omega$ (which depends only on $\{F^{-1}(u_n)\}$) such that $\{F^{-1}(w_n)\}$ converges to $u$ (see \eqref{uno-in-p}). From this, \eqref{Fatou} follows at once. 

Next, assume that $L\cap D=\emptyset$.  Then $[p,q]\subseteq L\cap \overline{D}=L\cap \partial D$. Let $\xi\in (p,q)$. Let $H\in \mathcal L_\xi$ be a complex supporting hyperplane for $D$ at $\xi$. If $\sigma:\C^N \to \C$ is a complex supporting functional such that $H=\{z\in \C^N: \sigma(z-\xi)=0\}$ then $\sigma(p-q)=0$, since $t(p-q)+\xi \in \partial D$ for $t\in \R$, $|t|<<1$.  This proves that $L\subset H$. But then,  $[p,q]\in \overline{D}\cap H$, and by the arbitrariness of $H$, it follows that $[p,q]\subset \hbox{Ch}(\xi)$, contradicting  $D$ being strictly $\C$-linearly convex. 

Therefore, $\hbox{II}_D^H(\underline{x})$ consists of one point.
\end{proof}

The hypothesis of $D$ being strictly $\C$-linearly convex  in the previous proposition is not necessary, as the following example shows:

\begin{ex}
Let $D:=\{(z_1,z_2)\in \C^2: \Re z_1>2(\Re z_2)^2\}$. It is easy to see that $D$ is convex, and it is biholomorphic to the Siegel domain $\mathbb H^2:=\{(w_1,w_2)\in \C^2: \Re w_1>|w_2|^2\}$ via the map $\mathbb H^2\ni (w_1,w_2)\mapsto (w_1+w_2^2, w_2)\in D$. The Siegel domain $\mathbb H^2$ is nothing but the unbounded realization of the ball $\B^2$ via the generalized Cayley transform $\B^2 \ni (z_1,z_2)\mapsto (\frac{1+z_1}{1-z_1}, \frac{z_2}{1-z_1}) \in \mathbb H^2$, hence, there exists a biholomorphism $F:\B^2\to D$ which extends as a homeomorphism on $\partial \B^2\setminus\{(1,0)\}$. In particular, $F^{-1}$ is continuous in a neighborhood of the point $(0,0)$, and $F^{-1}(0,0)=(-1,0)$. The sequence $\{w_n\}:=\{(\frac{1}{n},0)\}$ converges to $(0,0)$, and thus $\{F^{-1}(w_n)\}$ converges to $(-1,0)$ and it is admissible. It turns out that $\{w_n\}$ is admissible and  ${\hbox{II}}^D_H([\{u_n\}])=\{(-1,0)\}$. On the other hand, there is only one 
complex supporting hyperplane for $D$ at $(0,0)$, that is, $H=\{(z_1,z_2)\in \C^2: z_1=0\}$. Therefore
\[
\hbox{Ch}(0,0)=\partial D\cap H=\{(0,ti): t\in \R\}.
\]
\end{ex}

\subsection{Convex domains biholomorphic to strongly convex domains} 

The aim of this subsection is to prove Proposition \ref{strict-biholo-convex} for convex domains biholomorphic to strongly convex domains without any assumption on $\C$-strict linear convexity. 

We first recall some notions of the Gromov hyperbolicity theory.

\begin{defi}\label{geod-def}
Let $(D,d)$ be a metric space.

\begin{itemize}
 \item A curve $\gamma: [a,b] \rightarrow D$ is a {\it geodesic} if $\gamma$ is an isometry for the usual distance function on $[a,b] \subset \R$, i.e., $d(\gamma (t_1), \gamma (t_2))= \vert t_1-t_2 \vert$ for all $t_1,t_2 \in [a,b]$. We call $\gamma([a,b])$ a {\it geodesic segment}.
 \item The metric space $(D,d)$ is said to be a {\it geodesic metric space} if any two points in $D$ are connected by a geodesic.
 \item A {\it geodesic triangle} in $D$ is a union of images of three geodesics $\gamma_i: [a_i,b_i] \rightarrow D$, $i=1,2,3$,  such that $\gamma_i(b_i)=\gamma_{i+1}(a_{i+1})$ where the indices are taken modulo $3$.  The image of each $\gamma_i$ is called a {\it side} of the geodesic triangle.
 \item A geodesic metric space $(D,d)$ is
{\it Gromov hyperbolic} or $\delta$-hyperbolic if there exists $\delta >0$ such that for any geodesic triangle in $D$ the image of every side is contained in the $\delta$-neighborhood of union of the other two sides.
 \item Let $A \geq 1$ and $B > 0$. We say that $\gamma : [a,b] \rightarrow D$ is a {\sl $(A,B)$ quasi-geodesic} if for every $t,t' \in ]a,b[$ we have:
 $$
 \frac{1}{A}|t-t'| - B \leq d(\gamma(t),\gamma(t')) \leq A |t-t'| + B.
 $$
\end{itemize}
\end{defi}

Also, we need to prove some preliminary lemmas:

\begin{lem}\label{no-disc}
Let $D\subset \C^N$ be a  convex domain biholomorphic to a bounded strongly pseudoconvex domain. Let $\{p_j\}$ and $\{q_j\}$ be two sequences in $D$ which converge to two different boundary points. Then
\begin{equation}\label{go-infty}
\lim_{j\to \infty}K_D(p_j, q_j)=\infty.
\end{equation} 
\end{lem}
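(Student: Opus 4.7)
The plan is to argue by contradiction: suppose along a subsequence $K_D(p_j,q_j)\le M$ for some finite $M$. Let $\Phi:\Omega\to D$ denote the given biholomorphism with $\Omega$ bounded strongly pseudoconvex, and put $p'_j:=\Phi^{-1}(p_j)$, $q'_j:=\Phi^{-1}(q_j)$; since $\Phi$ is a Kobayashi isometry, $K_\Omega(p'_j,q'_j)\le M$. Completeness of $K_D$ forces $K_D(x,p_j),K_D(x,q_j)\to\infty$, so both $\{p'_j\},\{q'_j\}$ leave every compact of $\Omega$; boundedness of $\Omega$ lets me pass to a further subsequence with $p'_j\to p'\in\partial\Omega$ and $q'_j\to q'\in\partial\Omega$. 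If $p'\neq q'$, the standard lower estimate for the Kobayashi distance on bounded strongly pseudoconvex domains (for $z,w$ in disjoint boundary neighborhoods one has $K_\Omega(z,w)\ge -\tfrac12\log\delta(z)-\tfrac12\log\delta(w)+\mathrm{const}$, as used inside the proof of Lemma~\ref{squeeze}) gives $K_\Omega(p'_j,q'_j)\to\infty$, contradicting the bound $M$. Hence $p'=q'=:\omega\in\partial\Omega$.

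In this remaining case, Proposition~\ref{pointstrict} makes both $\{p'_j\}$ and $\{q'_j\}$ admissible in $\Omega$, and Proposition~\ref{stronglypseudo} makes them equivalent there; transferring through the isometry $\Phi$, the sequences $\{p_j\},\{q_j\}$ are admissible and equivalent in $D$ and hence define a single point $\underline{y}\in\partial_HD$. Lemma~\ref{whois} gives $p,q\in\hbox{II}^H_D(\underline{y})$, and by Remark~\ref{simple} the set $\hbox{II}^H_D(\underline{y})$ is convex, so the whole real segment $[p,q]$ lies inside $\hbox{II}^H_D(\underline{y})=\bigcap_{R>0}\overline{E^D_x(\{p_n\},R)}^{\C^N}$.

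To close I set up a Fatou-type argument. Let $L:=p+\C(q-p)$ and suppose first that $L\cap D\neq\emptyset$. Since $\bigcup_{R>0}E^D_x(\{p_n\},R)=D$, for $R$ large enough the slice $\Delta:=L\cap E^D_x(\{p_n\},R)$ is a non-empty convex open subset of $L\cong\C$ (Proposition~\ref{horo-convex}) whose boundary in $L$ contains $[p,q]$. A Riemann uniformization $\varphi:\D\to\Delta$ extends by Schwarz reflection analytically across an open arc $A\subset\partial\D$ with $\varphi(A)\subset[p,q]$. Under $\Phi^{-1}$, $\Delta$ maps into $E^\Omega_{\Phi^{-1}(x)}(\{p'_n\},R)$, which by Proposition~\ref{convergenceadmsp}(1) lies inside Abate's big horosphere $F^\Omega_{\Phi^{-1}(x)}(\omega,R)$; Theorem~\ref{intersF}(4) gives $\overline{F^\Omega(\omega,R)}^{\C^N}\cap\partial\Omega=\{\omega\}$. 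Hence for every $\zeta\in A$ the radial limit $\lim_{r\to1^-}\Phi^{-1}(\varphi(r\zeta))=\omega$. Pick a coordinate projection $\pi:\C^N\to\C$ making $g:=\pi\circ\Phi^{-1}\circ\varphi:\D\to\C$ non-constant; $g$ is bounded and holomorphic on $\D$ with radial limit $\pi(\omega)$ on the positive-measure arc $A$, so Privalov's uniqueness theorem forces $g\equiv\pi(\omega)$. Varying $\pi$ yields $\Phi^{-1}\circ\varphi\equiv\omega\in\partial\Omega$, contradicting $\Phi^{-1}(\varphi(\D))\subset\Omega$.

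The hard part is the remaining subcase $L\cap D=\emptyset$, in which $[p,q]\subset L\cap\partial D$ and $L$ meets no horosphere, so the Riemann-mapping construction of $\varphi$ does not apply. The natural remedy is to substitute for $L$ a two-dimensional complex affine plane $P$ containing $L$ and meeting $D$, analyze the convex slice $P\cap E^D_x(\{p_n\},R)\subset P\cong\C^2$, and produce inside it a holomorphic disc extending continuously to $\overline{\D}$ and mapping an arc of $\partial\D$ onto a non-trivial sub-segment of $[p,q]$; the same Fatou--Privalov argument then closes the proof. The construction of this two-variable disc from the convex geometry of $P\cap E^D_x$ near $[p,q]$ is where the real technical work lies.
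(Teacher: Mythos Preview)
Your approach is substantially different from the paper's, and it contains a genuine gap that you yourself flag: the subcase $L\cap D=\emptyset$ is not handled. You propose passing to a two--complex--dimensional affine plane $P\supset L$ meeting $D$ and constructing a holomorphic disc in $P\cap E^D_x(\{p_n\},R)$ whose boundary contains an arc of $[p,q]$, but you do not carry this out, and there is no evident mechanism that produces such a disc. In the proof of Proposition~\ref{strict-biholo-convex} the analogous case is dispatched using the \emph{strict $\C$-linear convexity} hypothesis, which is unavailable here; without it, a real segment $[p,q]\subset\partial D$ lying in a complex line disjoint from $D$ gives no obvious complex-analytic object on which to run a Fatou--Privalov argument. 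So as written the proof is incomplete. (There is also a minor regularity mismatch: you invoke Proposition~\ref{stronglypseudo}, which needs $C^3$ boundary, while the lemma only assumes strongly pseudoconvex; and Lemma~\ref{whois} is stated for \emph{bounded} convex $D$, which is not hypothesized here.)

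For comparison, the paper's argument avoids all of this by working directly in $D$: if $K_D(p_j,q_j)\le C$ along a subsequence, pick complex geodesics $\varphi_j:\D\to D$ with $\varphi_j(0)=p_j$, $\varphi_j(t_j)=q_j$; the bound forces $t_j$ to stay in a compact of $[0,1)$, and tautness of $D$ gives a subsequential limit $\varphi:\D\to\overline D$ with $\varphi(0)=p$ and $\varphi(t_0)=q$, hence a non-constant analytic disc in $\partial D$. Since $D$ is biholomorphic to a bounded strongly pseudoconvex domain it is Gromov hyperbolic for $K_D$ by \cite{BB}, and Zimmer's theorem \cite[Thm.~3.1]{Zi} rules out analytic discs in $\partial D$. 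This bypasses both the case split on $L\cap D$ and the transfer to $\Omega$, and requires only $C^2$ regularity on the strongly pseudoconvex side.
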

\begin{proof}
Assume \eqref{go-infty} is not true. Then, up to subsequences, we can assume that there exists $C>0$ such that for all $j$
\begin{equation}\label{bound-K-C}
K_D(p_j,q_j)\leq C.
\end{equation} 
Since $D$ is a complete hyperbolic convex domain, for every $j$ there exists a complex geodesic $\varphi_j:\D \to D$ such that $\varphi_j(0)=p_j$ and $\varphi_j(t_j)=q_j$  for some $t_j\in (0,1)$, see  \cite[Lemma 3.3]{BS}. By \eqref{bound-K-C} it follows
\[
K_\D(0,t_j)=K_D(\varphi_j(p_j),\varphi_j(q_j))=K_D(p_j,q_j)\leq C,
\]
hence, there exists $c\in (0,1)$ such that $t_j<c$ for all $j$, and we can assume without loss of generality that $t_j\to t_0$ for some $t_0\in [0,1)$. 
Being $D$ taut, up to subsequences, we can also assume that $\{\varphi_j\}$ converges uniformly on compacta to a holomorphic map $\varphi: \D \to \overline{D}$.  Moreover, 
\[
\varphi(t_0)=\lim_{j\to \infty}\varphi_j(t_j)=\lim_{j\to\infty}q_j=q,
\]
and similarly, $\varphi(0)=p$. This implies that $t_0\neq 0$ and $\varphi$ is not constant. In particular, $\partial D$ contains (non-constant) analytic discs. But $D$ is Gromov hyperbolic (with respect to the Kobayashi distance) since it is biholomorphic to a bounded strongly pseudoconvex domain (see \cite{BB}) and by \cite[Thm. 3.1]{Zi}, $\partial D$ cannot contain (non-constant) analytic discs, a contradiction.
\end{proof}

\begin{lem}\label{quasi-lem}
 Let $D$ be a hyperbolic  convex domain in $\mathbb C^n$. Let $x \in D$ and let $p \in \partial D$. There exists a bounded open neighborhood $U$ of $p$ and there exist $A>1, B > 0$ such that for every sequence $\{z_j\}$ of points in $D\cap U$ converging to $p$, the line segment $[x,z_j]$ is a $(A,B)$ quasi-geodesic.
\end{lem}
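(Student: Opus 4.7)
The plan is to parameterize the Euclidean segment $[x,z_j]$ by an intrinsic parameter that makes the lower quasi-geodesic inequality trivial from the triangle inequality, and then to derive the upper inequality from convexity of $D$ via boundary comparisons for the Kobayashi distance.

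First, by convexity of $D$, the function $\delta$ is concave along the segment $[x, z_j]$. Fix a small bounded open neighborhood $U$ of $p$. For $z_j \in D \cap U$ close to $p$, the function $s \mapsto K_D(x,(1-s)x+s z_j)$ is continuous and, after shrinking $U$ if necessary, strictly increasing (using that $K_D(x,\cdot)\to\infty$ at $\partial D$ together with standard one-sided comparisons of $K_D$ with $-\tfrac{1}{2}\log\delta$ in convex domains). Reparametrize the segment as $\gamma_j : [0,T_j] \to D$ with $\gamma_j(0)=x$, $\gamma_j(T_j)=z_j$, and $K_D(x,\gamma_j(t))=t$. The triangle inequality then gives the lower quasi-geodesic estimate
\[
K_D(\gamma_j(t_1),\gamma_j(t_2)) \ge \bigl|K_D(x,\gamma_j(t_1))-K_D(x,\gamma_j(t_2))\bigr| = |t_1-t_2|,
\]
so $\tfrac{1}{A}|t_1-t_2| - B \le K_D(\gamma_j(t_1),\gamma_j(t_2))$ holds with $A=1$, $B=0$.

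The upper inequality $K_D(\gamma_j(t_1),\gamma_j(t_2)) \le A|t_1-t_2|+B$ is the main obstacle. My plan is to combine two convexity ingredients. First, the complex line $\ell$ through $\gamma_j(t_1)$ and $\gamma_j(t_2)$ intersects $D$ in a planar convex slice $L\subset\ell$; since $K_D\le K_L$ and since the Poincar\'e metric on a bounded convex planar domain is pointwise comparable to $|dw|/\delta_L(w)$, integrating along the affine segment and using the concavity of $\delta$ on the segment yields
\[
K_D(\gamma_j(t_1),\gamma_j(t_2)) \le K_L(\gamma_j(t_1),\gamma_j(t_2)) \le C\bigl|\log\delta(\gamma_j(t_1))-\log\delta(\gamma_j(t_2))\bigr|+O(1).
\]
Second, a complex supporting hyperplane at $p$ (which exists by Hahn--Banach since $D$ is convex) projects $D$ into a half-plane of $\C$; the decreasing property of $K_D$ under linear projection gives $K_D(x,z) \ge \tfrac{1}{2}\log(1/\delta(z))-O(1)$ for $z\in D\cap U$. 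Combining these shows that the parameter $t=K_D(x,\gamma_j(t))$ is comparable to $-\tfrac{1}{2}\log\delta(\gamma_j(t))$ up to an additive constant independent of $j$, which feeds back into the first estimate to yield $K_D(\gamma_j(t_1),\gamma_j(t_2)) \le A|t_1-t_2|+B$ with $A,B$ uniform in $j$.

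The hardest part is ensuring the constants $A,B$ in the upper inequality remain uniform as $z_j$ varies over $D\cap U$ and $z_j\to p$. This is handled by shrinking $U$ so that the supporting hyperplane at $p$ provides a uniform linear lower bound on $\delta$ along the segment in terms of the affine parameter, and by exploiting the concavity of $\delta$ on $[x,z_j]$ to keep the logarithms in the two estimates comparable across all $z_j\in D\cap U$.
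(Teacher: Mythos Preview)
Your parametrization by $t=K_D(x,\gamma_j(t))$ is a nice idea: convexity of $K_D$ (established in the paper) makes $s\mapsto K_D(x,(1-s)x+sz_j)$ convex with value $0$ at $s=0$, hence strictly increasing, so the parametrization is well-defined and the lower quasi-geodesic inequality is indeed free. This is genuinely different from the paper's approach, which parametrizes by Kobayashi \emph{arc length}, splits each segment into a compact piece and a piece lying in a uniform cone $C^+(q,\nu,\alpha)\cap B(q,s_0)$ near the boundary, and then compares the infinitesimal metric on the cone piece with that of a supporting half-space to control arc length against distance.

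However, your upper-bound argument has real gaps. First, the supporting hyperplane at $p$ gives only
\[
K_D(x,z)\ \ge\ \tfrac12\log\frac{\Re\sigma(p)-\Re\sigma(x)}{\Re\sigma(p)-\Re\sigma(z)}-O(1),
\]
and $\Re\sigma(p)-\Re\sigma(z)$ is \emph{not} $\delta(z)$; in general $\delta(z)\le \operatorname{dist}(z,H_p)$ with no reverse inequality, even for $z$ on the segments $[x,z_j]$ (the nearest boundary point to $z$ need not be near $p$). So you do not obtain $t\ge -\tfrac12\log\delta(\gamma_j(t))-O(1)$. Second, the first ingredient is not justified: concavity of $\delta$ along the segment yields $\delta\ge$ its chord, hence
\[
\int_{s_1}^{s_2}\frac{|z_j-x|\,ds}{\delta(\gamma(s))}\ \le\ \frac{|z_j-x|}{\delta(\gamma(s_1))-\delta(\gamma(s_2))}\,\log\frac{\delta(\gamma(s_1))}{\delta(\gamma(s_2))},
\]
and the prefactor $(s_2-s_1)/(\delta(\gamma(s_1))-\delta(\gamma(s_2)))$ is not uniformly bounded without further input; you never reduce this to $C\,|\log\delta(\gamma(s_1))-\log\delta(\gamma(s_2))|+O(1)$. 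Third, even granting both ingredients, you would need the \emph{two-sided} additive comparison $t\approx -\tfrac12\log\delta(\gamma_j(t))$; the slice argument gives at best $t\le -C\log\delta+O(1)$ with a constant $C$ you do not control, so the differences of $\log\delta$ do not collapse to $|t_1-t_2|$. The paper avoids all of this by working with the \emph{directional} boundary distance $\delta(z,\partial D_{\nu,z})$ inside a fixed cone (where it is comparable to $|z-q|$), and then comparing with the half-space $H_q$ whose real line segments are quasi-geodesics with constants depending only on the cone aperture --- this is exactly the mechanism that produces uniform $A,B$.
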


\begin{proof} 
The proof of Lemma~\ref{quasi-lem} is a modification of the proof of \cite[Proposition 4.2]{GS}. For convenience of the reader we present it, adapted to our situation.

For $q \in \mathbb C^n$, $\nu \in \mathbb C^n$ with $|\nu|=1$ and $\alpha \in (0,\pi)$, let $C^+(q,\nu,\alpha)$ denote the half cone of vertex $q$, direction $\mathbb R^+ \nu$, and aperture $\alpha$.

Let $\nu = \frac{x-p}{|x-p|}$. Since the real  segment $[x,p)$ is contained in $D$, there exists a bounded open neighborhood $V$ of $p$ and there exists $s_0 > 0$ such that for every $q \in V \cap \partial D$ the segment $[q+s_0\nu,q)$ is contained in $D$.  Let $\alpha > 0$ be such that $C^+(p,\nu,\alpha) \cap B(p,s_0)$ is contained in $D$. Shrinking $V$ and changing $s_0$ if necessary,  we can assume that $x\not\in V$ and that the intersection $C^+(q,\nu,\alpha) \cap B(q,s_0)$ is contained in $D$ for every $q \in \partial D \cap V$. Still shrinking $V$ if necessary, we may also assume that for every $q \in \partial D \cap V$ the real segment $[x,q)$ is contained in $C^+(q,\nu,\alpha/2)$ and that the set $S_V:=\bigcup_{q \in \partial D \cap V}\left(C^+(q,\nu,\alpha) \cap \partial B(q,s_0)\right)$ is relatively compact in $D$. Let $W(x, S_V)$ be the convex hull of $x$ and $S_V$. Note that $W(x, S_V)$ is relatively compact in $D$. Let
\begin{equation}\label{eqBcon}
B_1:=\sup_{w_0,w_1\in W(x, S_V)} K_D(w_0,w_1)<+\infty.
\end{equation}
Finally, let $U\subset V$ be an open neighborhood of $p$ with the property that for every $z\in U\cap D$, there exists $q\in V\cap \partial D$ such that $z\in C^+(q,\nu,\alpha/2)\cap B(q,s_0)$.

If $\gamma:[a,b]\to D$ is a piecewise $C^1$  smooth curve, we denote by $l^K_D(\gamma([a,b]))$ its Kobayashi length, {\sl i.e.}, $l^K_D(\gamma([a,b]))=\int_a^b k_D(\gamma(t);\gamma'(t))dt$.

Now, let $\{z_j\}\subset D\cap U$ converging to $p$. For every $j \in \N$, we parametrize the real segment $[x,z_j]$ with respect to Kobayashi arc length, meaning that we consider a piecewise $\mathcal C^1$ curve $\gamma_j : [0,T_j] \rightarrow D$ such that $\gamma_j([0,T_j]) = [x,z_j]$ and the Kobayashi length $l^K_D(\gamma_j([a,b]))=|b-a|$ for all $0\leq a\leq b\leq T_j$, for every $j\in \N$.

By construction, for every $j\in \N$ there exists $q_j\in V\cap \partial D$ such that $z_j\in C^+(q_j,\nu,\alpha/2)\cap B(q_j,s_0)$ and $[x,q_j)\subset C^+(q_j,\nu,\alpha/2)$. Since $x\not\in U$ by definition, for every $j\in \N$ there exists a unique point $w_j\in \partial B(q_j,s_0)\cap [x,z_j)$. Let $R_j\in (0, T_j)$ be such that $\gamma_j(R_j)=w_j$. 

\vskip 0,1cm
\noindent{\sl Claim}. There exist $A>1$ and $B_2>0$  such that for every $q\in V\cap \partial D$, given any $w_0\in  C^+(q,\nu,\alpha/2)\cap \partial B(q,s_0)$  the real segment $[w_0,q)$ is a $(A,B_2)$ quasi-geodesic.

\vskip 0,1cm

Assuming the claim for the moment, the proof ends as follows. Let $B:=B_1+B_2$, where $B_1$ is given by \eqref{eqBcon}.

\noindent{\sl Case 1.} If $0\leq s\leq t\leq R_j$ then 
\[
l_D^K(\gamma_j([s,t]))\leq B_1\leq A K_D(\gamma_j(s),\gamma_j(t))+B.
\]

\noindent{\sl Case 2.} If $0\leq s\leq  R_j\leq t\leq T_j$ then by the Claim, 
\begin{equation}\label{eq-inc2}
l_D^K(\gamma_j([s,t]))= l_D^K(\gamma_j([s,R_j]))+ l_D^K(\gamma_j([R_j,t]))\leq B_1+AK_D(\gamma(R_j), \gamma(t))+B_2.
\end{equation}
Now, $\gamma(R_j)=w_j=r_j \gamma(s)+(1-r_j)\gamma(t)$ for some $r_j\in (0,1)$. 
Since  $K_D$ is a  convex function (see the proof of Prop. \ref{horo-convex}), then 
\begin{equation*}
\begin{split}
K_D(\gamma(R_j), \gamma(t))&=K_D(r_j \gamma(s)+(1-r_j)\gamma(t), \gamma(t))\\ &\leq \max\{K_D(\gamma(s),\gamma(t)), K_D(\gamma(t),\gamma(t))\}=K_D(\gamma(s),\gamma(t)).
\end{split}
\end{equation*}
Hence, by \eqref{eq-inc2} we have
\[
l_D^K(\gamma_j([s,t]))\leq AK_D(\gamma(s), \gamma(t))+B.
\]

\noindent{\sl Case 3.} If $  R_j\leq s\leq t\leq T_j$ then $\gamma(s)$ and $\gamma(t)$ belong to the real segment $[w_j, q_j]$ which, by the Claim,  is a $(A,B_2)$ quasi-geodesic, and, hence, a $(A,B)$ quasi-geodesic.

Finally, since for every $s,t\in [0,T_j]$ we have
\[
\frac{1}{A} K_D(\gamma_j(s),\gamma_j(t))-B\leq K_D(\gamma_j(s),\gamma_j(t))\leq l_D^K(\gamma_j([s,t])),
\]
the previous arguments show that $[x,z_j]$ are $(A,B)$-quasi geodesics for every $j$.

We are left to prove the Claim. 
For a point $y \in D$, as usual, we denote by $\delta(y)$ the Euclidean distance from $y$ to $\partial D$ and by $\delta(y, \partial D_{\nu,y})$ the Euclidean distance from $y$ to $\partial D$ along the complex line $\mathbb C \nu$, {\sl i.e.}, $\delta(y, \partial D_{\nu,y}):=\hbox{dist}(y, \partial D\cap(\C\nu+y))$. We recall the standard estimates on the Kobayashi infinitesimal metric  on convex domains (see, {\sl e.g.}, \cite{BP}):
\begin{equation}\label{StandardEst}
\frac{|v|}{2\delta(z,\partial D_{v,z})}\leq k_D(z;v)\leq \frac{|v|}{\delta(z,\partial D_{v,z})}.
\end{equation}
Let now $q\in V\cap \partial D$ and $w_0\in  C^+(q,\nu,\alpha/2)\cap \partial B(q,s_0)$. Let $\eta(r):=(1-r)w_0+rq$, $r\in [0,1)$. Also, for a point $z\in C^+(q,\nu,\alpha/2)\cap \overline{B(q,s_0)}$, we let $\tilde{z}\in [q+s_0\nu, q]$ be the (real) orthogonal projection of $z$ on the axis of $C^+(q,\nu,\alpha/2)$. Let  $\nu_q:=\frac{w_0-q}{|w_0-q|}$. Since $C^+(q,\nu,\alpha)\cap B(q,s_0)\subset D$, there exists a constant $C>0$ (depending only on $s_0$ and $\alpha$ but not on $q$ and $w_0\in C^+(q,\nu,\alpha/2)\cap \partial B(q,s_0)$ ) such that, for every $z\in [w_0,q)$ it holds
\[
\delta(\tilde z,\partial D_{\nu,\tilde z})\leq C\delta(z,\partial D_{\nu_q,z}).
\]

For every $r \in [0,1)$, we have $\widetilde{\eta(r)} = \Re\langle \eta(r)-q ,\nu\rangle\nu+q=(1-r)\Re\langle w_0-q,\nu\rangle \nu+q$. Note that $\Re\langle w_0-q ,\nu\rangle>0$.
Hence, for every $0\leq r_1\leq r_2<1$, we have by~\eqref{StandardEst}:
$$
\begin{array}{lllll}
l^K_D(\eta[r_1,r_2]) & = & \displaystyle \int_{r_1}^{r_2}k_D(\eta(r);\eta'(r))dr & \leq & \displaystyle \int_{r_1}^{r_2}\frac{|q-w_0|}{\delta(\eta(r),\partial D_{\nu_q, \eta(r)})}dr\\
& & & & \\
& & & \leq & \displaystyle \frac{1}{C}\frac{|q-w_0|}{|\Re\langle w_0-q ,\nu\rangle|}\int_{r_1}^{r_2}\frac{|\Re\langle w_0-q ,\nu\rangle|}{\delta(\widetilde{\eta(r)},\partial D_{\nu, \widetilde{\eta(r)}})}dr\\
& & & & \\
& & &  \leq & \displaystyle \frac{1}{C}\frac{|q-w_0|}{|\Re\langle w_0-q ,\nu\rangle |}2\int_{r_1}^{r_2}k_D(\widetilde{\eta(r)};\widetilde{\eta'(r)})dr\\
& & & & \\
& & & = & \displaystyle \frac{2|q-w_0|}{C|\Re\langle w_0-q ,\nu\rangle |}l^K_D(\widetilde{\eta}[r_1,r_2]).
\end{array}
$$
Therefore,
\[
l^K_D(\eta[r_1,r_2])\leq \frac{2|q-w_0|}{C|\Re\langle w_0-q ,\nu\rangle |}l^K_D(\widetilde{\eta}[r_1,r_2]).
\]
Hence, the Claim is equivalent to the following
\vskip 0,1cm
\noindent{\sl Claim'}. There exist $A>1$ and $B_2>0$  such that for every $q\in V\cap \partial D$,  the real segment $[q+s_0\nu,q)$ is a $(A,B_2)$ quasi-geodesic.

\vskip 0,1cm

Let $\gamma : \mathbb R^+ \rightarrow  D$ parametrizing the real segment $[q+s_0\nu,q)$ by the Kobayashi arc length, with $\gamma(0)=q+s_0 \nu$.

Since $D$ is convex, and since $C^+(q,\nu,\alpha) \cap B(q,s_0) \subset D$, there exists $A'_{\alpha}>1$ such that for every $s \in \mathbb R^+$:
$$
\delta(\gamma(s), \partial D_{\nu, \gamma(s)}) \geq \delta(\gamma(s)) \geq \frac{1}{A'_{\alpha}} |\gamma(s)-q|.
$$
Let $H_q$ be a real half-space containing $D$ and such that $\partial H_q$ is a real supporting hyperplane for $D$ at $q$. Hence, we have for every $0 < t < t'$ (see below for explanation of the various inequalities):
$$
\begin{array}{lllll}
K_{D}(\gamma(t),\gamma(t')) & \leq & l^K_D([\gamma(t),\gamma(t')]) & \leq & \displaystyle \int_t^{t'}\displaystyle \frac{|\gamma'(s)|}{\delta(\gamma(s),\partial D_{\nu, \gamma(s)})}ds\\
 & & & & \\
 & & & \leq & A'_{\alpha} \displaystyle \int_t^{t'}\displaystyle \frac{|\gamma'(s)|}{|\gamma(s)-q|}ds\\
 & & & & \\
 & & & \leq & A'_{\alpha} \displaystyle \int_t^{t'}\displaystyle \frac{|\gamma'(s)|}{\delta(\gamma(s),\partial (H_q)_{\nu, \gamma(s)})}ds\\
 & & & & \\
 & & & \leq & 2A'_{\alpha} \ l^K_{H_q}([\gamma(t),\gamma(t')])\\
 & & & & \\
 & & & \leq & A_{\alpha} K_{H_q}(\gamma(t),\gamma(t')) + B_{\alpha}\\
 & & & & \\
 & & & \leq & A_{\alpha} K_{D}(\gamma(t),\gamma(t')) + B_{\alpha},
\end{array} 
$$
for some positive constants $A_{\alpha} > 1$ and $B_{\alpha}>0$, depending only on $\alpha$.

The second and the fifth inequalities follow from \eqref{StandardEst}. The fourth inequality uses the fact that $\delta(\gamma(s),\partial (H_q)_{\nu, \gamma(s)}) \leq |\gamma(s)-q|$ since $q \in \partial H_q \cap (\mathbb C \nu+\gamma(s))$ for all $s$. 
The sixth inequality uses the fact that every real segment in $H_q$ is a quasi-geodesic on the half space $H_q$ with constants depending only on the angle between the segment and $\partial H_q$.
Finally, the last inequality uses the fact that $D$ is contained in $H_q$.

This proves Claim' and the proof is completed.
\end{proof}

We will need the following statement (see also Lemma~3.3. in \cite{GS2}).
\begin{lem}\label{unique-geodesic}
Let $D\subset \C^N$ be a  convex domain biholomorphic to a bounded strongly convex domain with $C^3$ boundary. Then for every couple of points $z_0, w_0\in D$ there exists a unique real geodesic  for the Kobayashi distance which joins $z_0$ and $w_0$. 
\end{lem}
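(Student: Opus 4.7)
The plan is to reduce the statement to the strongly convex setting via the biholomorphism, then invoke Lempert's theory. Let $F : \Omega \to D$ be the biholomorphism guaranteed by hypothesis, with $\Omega \subset \C^N$ a bounded strongly convex domain with $C^3$ boundary. Since $F$ is an isometry for the Kobayashi distances, real $K_D$-geodesics from $z_0$ to $w_0$ in $D$ correspond bijectively, via $F^{-1}$, to real $K_\Omega$-geodesics from $F^{-1}(z_0)$ to $F^{-1}(w_0)$ in $\Omega$. Therefore it suffices to prove uniqueness of real Kobayashi geodesics between any two points of $\Omega$.

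For the strongly convex case, I would argue using Lempert's theorem \cite{L}. Given $p,q \in \Omega$, there exists a unique (up to reparametrization) complex geodesic $\varphi : \D \to \Omega$ with $\varphi(0)=p$ and $\varphi(s_0)=q$ for some $s_0 \in (0,1)$, together with a holomorphic left inverse $\rho : \Omega \to \D$ satisfying $\rho \circ \varphi = \mathrm{id}_\D$. Let $\eta : [0,T] \to \Omega$ be any real geodesic from $p$ to $q$ parametrized by arc length, with $T = K_\Omega(p,q) = K_\D(0,s_0)$. Since $\rho$ is distance-decreasing, for every $s \in [0,T]$ one has $K_\D(0,\rho(\eta(s))) \le s$ and $K_\D(\rho(\eta(s)),s_0) \le T-s$; the triangle inequality in $\D$ together with $K_\D(0,s_0)=T$ forces equality in both estimates, and the uniqueness of the real geodesic in $\D$ joining $0$ and $s_0$ then pins down $\rho(\eta(s)) = \tau(s)$, where $\tau$ is the arc-length parametrization of the real segment $[0,s_0]$.

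The remaining and substantial step is to conclude $\eta(s) = \varphi(\tau(s))$, i.e.\ that $\eta$ actually lies in the image $\varphi(\D)$. This is where the strong convexity of $\Omega$ enters essentially: in a bounded strongly convex domain with $C^3$ boundary, any real Kobayashi geodesic is contained in the image of the (unique) complex geodesic joining its endpoints. This is a consequence of the smoothness and strict convexity of the Kobayashi indicatrix established by Lempert (see \cite{L}, or \cite{Aba}; see also Lemma~3.3 in \cite{GS2} for an argument tailored to the geodesic uniqueness statement). Once this containment is granted, uniqueness of $\eta$ reduces to uniqueness of the real geodesic between $0$ and $s_0$ in the disc, which is immediate.

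The main obstacle in the plan is precisely the last step: it is not a formal consequence of the isometry property of $F$ nor of the triangle inequality, but rather a genuine input from Lempert's theory which relies on the $C^3$ regularity of $\partial\Omega$ and strong convexity. Everything else—the reduction via $F$ and the projection-plus-triangle-inequality argument—is formal.
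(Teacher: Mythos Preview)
Your proposal is correct and follows the same overall strategy as the paper: transport to the strongly convex model $\Omega$ via the biholomorphism, then use Lempert's theory to pin the real geodesic inside the unique complex geodesic through the endpoints. The execution differs slightly. You first project via the left inverse $\rho$ and use the triangle inequality in $\D$ to identify $\rho\circ\eta$ with the disc geodesic, and then invoke as a black box (citing \cite{L}, \cite{Aba}, \cite{GS2}) that real Kobayashi geodesics in a strongly convex $C^3$ domain lie in the complex geodesic through their endpoints. The paper instead proves this containment in a self-contained way: it establishes the strict contraction inequality $K_\Omega(z_0,\rho(w))<K_\Omega(z_0,w)$ for $w\notin\varphi(\D)$---using that in a strongly convex domain Kobayashi balls are strongly convex and the Lempert projection has affine fibers---and then derives a contradiction directly from the additivity $K_\Omega(z_0,w_0)=K_\Omega(z_0,z_1)+K_\Omega(z_1,w_0)$ for any point $z_1$ on a competing geodesic. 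Your preliminary projection step is correct but not needed once the containment is in hand; conversely, the paper's strict inequality is exactly the content you are outsourcing to the references.
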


\begin{proof}
Since $D$ is biholomorphic to a bounded strongly convex domain, Lempert's theory \cite{L, Le1,Le2} (see also \cite{Aba}) implies that for every $z_0,w_0\in D$, $z_0\neq w_0$, there exists a unique complex geodesic whose image contains $z_0, w_0$. In other words, there exists $\varphi:\D \to D$ holomorphic such that $K_\D(\zeta,\eta)=K_D(\varphi(\zeta), \varphi(\eta))$ for all $\zeta, \eta\in \D$ and $z_0, w_0\in \varphi(\D)$ and,  moreover, if $\tilde{\varphi}:\D \to D$ is any holomorphic map such that there exist $\zeta_0, \zeta_1\in \D$ with $\tilde{\varphi}(\zeta_0)=z_0$, $\tilde{\varphi}(\zeta_1)=w_0$ and $K_D(z_0,w_0)=K_\D(\zeta_0,\zeta_1)$, then there exists an automorphism $\theta:\D \to \D$ such that $\tilde{\varphi}=\varphi\circ \theta$. If $\varphi: \D\to D$ is a complex geodesic, there exists a  holomorphic retraction $\rho: D\to D$, called the Lempert projection, such that $\rho\circ \rho=\rho$, $\rho(D)=\varphi(\D)$. In what follows we will use the following fact: if $z_0\in \varphi(\D)$ and $w\in D\setminus \varphi(\D)$ then
\begin{equation}\label{strict-ineq-convex-stric}
K_D(z_0, \rho(w))=K_D(\rho(z_0), \rho(w))<K_D(z_0,w).
\end{equation}
In order to prove inequality \eqref{strict-ineq-convex-stric} we can assume that $D$ is a bounded strongly convex domain with $C^3$ boundary. Let $\varphi:\D \to D$ be a complex geodesic and let $\rho:D\to \varphi(\D)$ be the associated Lempert projection. Fix $R>0$ and $z_0\in \varphi(\D)$. Since $D$ is assumed to be strongly convex, the Kobayashi ball $B_K(z_0, R)$ of center $z_0$ and radius $R>0$ is strongly convex as well, and  the Lempert projection $\rho$ has affine fibers (see \cite[Prop. 3.3]{BPT}). Taking into account that $\rho$ contracts the Kobayashi distance $K_D$, it follows that for every $\zeta\in \D$ with $K_D(\varphi(\zeta), z_0)=R$ it holds $\rho^{-1}(\varphi(\zeta))\cap \overline{B_K(z_0, R)}=\{\varphi(\zeta)\}$. From this,   \eqref{strict-ineq-convex-stric} follows at once by considering $\zeta \in \mathbb D$ with $\varphi(\zeta) = \rho(w)$.

Now, let $z_0, w_0\in D$. Hence, there exists a unique complex geodesic $\varphi:\D \to D$ such that $z_0, w_0\in \varphi(\D)$. Since $\varphi$ is an isometry between  $K_\D$ and $K_D$,  it follows that there exists a real geodesic for $K_D$, call it $\gamma:[0,a]\to \varphi(\D)$, $\gamma(0)=z_0, \gamma(a)=w_0$ for  $a=k_D(z_0,w_0)$,  which is contained in $\varphi(\D)$. Moreover, $\gamma$ is the only real geodesic joining $z_0$ and $w_0$ contained in $\varphi(\D)$. 

Let assume that $\tilde{\gamma}:[0, a]\to D$,  is another real geodesic for $K_D$ such that $\tilde{\gamma}(0)=z_0$ and $\tilde{\gamma}(a)=w_0$ and $\tilde{\gamma}([0, a])\neq \gamma([0,a])$. Then the image of $\tilde{\gamma}$ is not contained in $\varphi(\D)$. Hence, there exists $t\in (0,a)$ such that $z_1:=\tilde{\gamma}(t)\not\in \varphi(\D)$. Then, since $\tilde{\gamma}$ is a real geodesic for the Kobayashi distance,
\[
K_D(z_0,w_0)=K_D(z_0,z_1)+K_D(z_1,w_0).
\]
Now, let $\rho:D\to \varphi(\D)$ be the Lempert projection associated with $\varphi$. Since $\rho$ contracts the Kobayashi distance, by \eqref{strict-ineq-convex-stric} it follows:
\begin{equation*}
\begin{split}
K_D(z_0,w_0)&=K_D(z_0, z_1)+K_D(z_1,w_0)> k_D(\rho(z_0), \rho(z_1))+k_D(\rho(z_1), \rho(w_0))\\&\geq K_D(\rho(z_0), \rho(w_0))=K_D(z_0,w_0),
\end{split}
\end{equation*}
a contradiction, and the result is proved.
\end{proof}

\begin{theo}\label{ball-biholo-convex}
Let $D\subset \C^N$ be a bounded convex domain biholomorphic to a bounded strongly convex domain with $C^3$ boundary. Then for every $\underline{x}\in \partial_H D$ there exists a unique $p\in \partial D$ such that 
\begin{equation}\label{ball-II-one}
\hbox{I}_D^H(\underline{x})=\hbox{II}_D^H(\underline{x})=\{p\}.
\end{equation}
\end{theo}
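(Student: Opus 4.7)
The plan is to reduce the statement to showing that $\hbox{II}^H_D(\underline{x})$ is a singleton for every $\underline{x}\in\partial_HD$, after which Theorem \ref{I-for-convex} delivers the full conclusion. Fix a biholomorphism $F\colon \Omega\to D$ with $\Omega$ bounded strongly convex with $C^3$ boundary. By \cite[Thm.~1.4]{BB}, $\Omega$ is Gromov hyperbolic for $K_\Omega$ and its Gromov boundary is canonically homeomorphic to $\partial\Omega$; since $F$ is a Kobayashi isometry, the same properties transfer to $D$, and two non-relatively-compact sequences in $D$ share a Gromov boundary limit if and only if their $F^{-1}$-images share a Euclidean limit in $\partial\Omega$.

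Fix now an admissible sequence $\{u_n\}$ representing $\underline{x}$. By Proposition \ref{convergenceadmsp} applied in $\Omega$, $\{F^{-1}(u_n)\}$ converges Euclidean-wise to a single $p^{\ast}\in\partial\Omega$. Assume for contradiction that $p\ne q$ both lie in $\hbox{II}^H_D(\underline{x})$, and choose sequences $\{z_n^p\},\{z_n^q\}\subset D$ with $z_n^p\to p$, $z_n^q\to q$ Euclidean-wise and both $E$-converging to $\underline{x}$. Lemma \ref{M-admiss} makes each of them admissible in $D$ and equivalent to $\{u_n\}$, so $\{F^{-1}(z_n^p)\}$ and $\{F^{-1}(z_n^q)\}$ are equivalent admissible sequences in $\Omega$; by Proposition \ref{convergenceadmsp} both converge to $p^{\ast}$ in $\overline{\Omega}$. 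Hence in $D$ they have the same Gromov-boundary limit, and the Gromov product $(z_n^p,z_n^q)_x\to\infty$.

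The core of the argument then uses quasi-geodesic control. By Lemma \ref{quasi-lem}, for $n$ large the real segments $[x,z_n^p]$ and $[x,z_n^q]$ are $(A,B)$ quasi-geodesics in $(D,K_D)$. The Morse lemma in the Gromov hyperbolic space $D$ places each within a uniform Kobayashi Hausdorff distance $H$ of the corresponding real Kobayashi geodesic (unique by Lemma \ref{unique-geodesic}), which I denote by $\gamma_n$ and $\eta_n$. Choose $t_n:=(z_n^p,z_n^q)_x-C$ with $C$ large (depending only on the hyperbolicity constant $\delta_0$); then $t_n\to\infty$, and the standard thin triangle estimate yields $K_D(\gamma_n(t_n),\eta_n(t_n))\leq 4\delta_0$. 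Pulling back via Morse produces $a_n'\in[x,z_n^p]$ and $b_n'\in[x,z_n^q]$ such that $K_D(x,a_n'),K_D(x,b_n')\geq t_n-H\to\infty$ and $K_D(a_n',b_n')\leq 2H+4\delta_0$.

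Write $a_n'=(1-s_n^p)x+s_n^p z_n^p$ with $s_n^p\in[0,1]$. Since $D$ is convex, $B\bigl(a_n',(1-s_n^p)\delta(x)\bigr)\subset D$, so $\delta(a_n')\geq(1-s_n^p)\delta(x)$. The domain $D$ being bounded and complete hyperbolic, $K_D(x,a_n')\to\infty$ forces $\delta(a_n')\to 0$, hence $s_n^p\to 1$ and $a_n'\to p$ Euclidean-wise; analogously $b_n'\to q$. Because $p\ne q$, Lemma \ref{no-disc} yields $K_D(a_n',b_n')\to\infty$, contradicting the uniform bound obtained above. This shows $\hbox{II}^H_D(\underline{x})=\{p\}$, and Theorem \ref{I-for-convex} concludes. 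The main obstacle is marrying the Gromov-hyperbolic thin triangle estimate---which supplies interior pairs at bounded Kobayashi distance on two real segments emanating from $x$---with the a priori distinct Euclidean limits of these segments: the bridge is the Morse lemma combined with the elementary convex estimate $\delta(a_n')\geq(1-s_n^p)\delta(x)$, which converts Kobayashi blow-up along a convex segment into Euclidean collapse to the segment's endpoint on $\partial D$.
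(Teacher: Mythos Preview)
Your proof is correct, and its overall architecture matches the paper's: reduce via Theorem~\ref{I-for-convex} to showing $\hbox{II}^H_D(\underline{x})$ is a singleton; push two hypothetical sequences hitting distinct points $p,q\in\hbox{II}^H_D(\underline{x})$ through $F^{-1}$ to the same Euclidean boundary point of $\Omega$ (Lemma~\ref{M-admiss} and Proposition~\ref{convergenceadmsp}); then combine the quasi-geodesic lemma~\ref{quasi-lem}, Gromov hyperbolicity of $(D,K_D)$, and Lemma~\ref{no-disc} to reach a contradiction.

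The execution of the contradiction, however, is genuinely different. The paper passes to a \emph{limiting} geodesic ray: using Lempert's theory in $\Omega$ (through Lemma~\ref{unique-geodesic} and the convergence of complex geodesics to the unique complex geodesic landing at $\xi$), it obtains a single ray $\gamma=F\circ\tilde\gamma$ in $D$ and shows, via the shadowing lemma, that $\gamma([0,\infty))\subset\mathcal N_{2M}([x,p))\cap\mathcal N_{2M}([x,q))$, a set shown to be relatively compact by Lemma~\ref{no-disc}. You instead work at finite stage: the Gromov product $(z_n^p,z_n^q)_x\to\infty$ together with the thin-triangle inequality produces points $\gamma_n(t_n),\eta_n(t_n)$ at uniformly bounded Kobayashi distance, and the Morse lemma pulls these back to points $a_n',b_n'$ on the actual segments, whose Euclidean convergence to $p$ and $q$ (via the elementary convex estimate $\delta(a_n')\geq(1-s_n^p)\delta(x)$) contradicts Lemma~\ref{no-disc} directly.

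Two remarks. First, your invocation of Lemma~\ref{unique-geodesic} is superfluous: the shadowing lemma only requires the existence of \emph{some} geodesic between the endpoints, which is automatic by Hopf--Rinow. Stripping it out, your argument uses nothing from Lempert's theory; every ingredient (Gromov hyperbolicity via \cite{BB}, Lemmas~\ref{M-admiss}, \ref{quasi-lem}, \ref{no-disc}, Proposition~\ref{convergenceadmsp}) is already available when $\Omega$ is merely strongly pseudoconvex with $C^3$ boundary. Thus your route in fact yields the statement for $D$ bounded convex biholomorphic to a bounded strongly \emph{pseudoconvex} domain with $C^3$ boundary, which is the conjecture the paper states after Theorem~\ref{I-for-convex}. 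Second, the step ``same Euclidean limit in $\partial\Omega$ $\Rightarrow$ $(z_n^p,z_n^q)_x\to\infty$'' deserves one extra line: interleave the two sequences, observe the interleaved sequence still converges in $\overline{\Omega}\cong\overline{\Omega}^G$, hence is Gromov--Cauchy, which gives the diagonal Gromov product tending to infinity.
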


\begin{proof} If we prove that $\hbox{II}_D^H(\underline{x})=\{p\}$, then the result follows from Theorem \ref{I-for-convex}.

Assume by contradiction that there exists $q\in \hbox{II}_D^H(\underline{x})$ with $p\neq q$. Fix $x\in D$.

Since $\hbox{II}_D^H(\underline{x})$ is convex, the real segment $[p,q]\subset \hbox{II}_D^H(\underline{x})$. Let $\{u_n\}$ be an admissible sequence which represents $\underline{x}$. By \eqref{I-and-II}, the segment $[p,q]\subset \hbox{II}(\{u_n\}, R)$ for all $R>0$. In particular, if we take $R>1$ so that $x\in E_x(\{u_n\}, R)$, the real  segments $[x,p)$ and $[x,q)$ are contained in $E_x(\{u_n\}, R)$ (by convexity, see Proposition \ref{horo-convex}). Set $p(t):=(1-t)p+t x$ and  $q(t):=(1-t)q+tx$, $t\in [0,1)$. 
Then for every $t\in [0,1)$,  $p(t), q(t)\in E_x(\{u_n\}, R)$.

Let $\{t_j\}\subset (0,1)$ be any sequence which converges to $1$. By Lemma \ref{M-admiss},  the sequences $\{p(t_j)\}, \{q(t_j)\}$ are admissible and equivalent to $\{u_n\}$.
 
By hypothesis, there exist $\Omega\subset \C^N$ a bounded strongly convex domain with $C^3$ boundary and a biholomorphism $F:\Omega \to D$.

Then the sequences $\{F^{-1}(p(t_j))\}$ and $\{F^{-1}(q(t_j))\}$ belong to the same horosphere $E_{F^{-1}(x)}(\{F^{-1}(u_n)\}, R)$ and are equivalent. Also, both sequences are equivalent to $\{F^{-1}(u_n)\}$.
Hence, it follows from Proposition~\ref{convergenceadmsp} that the sequences $\{F^{-1}(p(t_j))\}$, $\{F^{-1}(q(t_j))\}$ and $\{F^{-1}(u_j)\}$ converge to the same boundary point $\xi \in \partial \Omega$. By the arbitrariness of $\{t_j\}$, it follows that, in fact, $\lim_{t\to 1}F^{-1}(p(t))=\lim_{t\to 1}F^{-1}(q(t))=\xi$. 

For a fixed $t\in (0,1)$, let $\gamma^p_t:[0, R_t]\to D$ ({\sl respectively} $\gamma^q_t:[0, R'_t]\to D$) be the  real (Kobayashi) geodesic in $D$ such that $\gamma^p_t(0)=x$ and $\gamma^p_t(R_t)=p(t)$ ({\sl respect.}, $\gamma^q_t(0)=x$ and $\gamma^q_t(R'_t)=q(t)$). 

Then,  by Lemma \ref{unique-geodesic}, $F^{-1}\circ \gamma_t^q:[0,R'_t]\to \Omega$ is the unique real  geodesic joining $F^{-1}(x)$ and $F^{-1}(q(t))$, while $F^{-1}\circ \gamma_t^p:[0,R_t]\to \Omega$ is the unique real geodesic joining $F^{-1}(x)$ and $F^{-1}(p(t))$. By Lempert's theory (see, {\sl e.g.}, \cite{Aba, BPT}), since $\Omega$ is a $C^3$ strongly bounded convex domain, there exists a unique real geodesic  $\tilde\gamma:[0,\infty)\to \Omega$ such that $\tilde\gamma(0)=F^{-1}(x)$ and $\lim_{t\to\infty}\tilde\gamma(t)=\xi$ and, since $F^{-1}(p(t))$ and $F^{-1}(q(t))$ converge to $\xi$ as $t\to 1$, the Kobayashi geodesics $F^{-1}(\gamma^p_t)$ and $F^{-1}(\gamma^q_t)$ converge uniformly on compacta of $[0,\infty)$ to $\tilde\gamma$. 

Let $\gamma:=F\circ \tilde\gamma$. Since $F$ is an isometry for the Kobayashi distance, this implies that for every $\epsilon>0$ and for every $R>0$ there exists $t_0\in (0,1)$ such that for all $s\in [0,R]$ and $t\in [t_0,1)$ it holds 
\begin{equation}\label{eqi1}
K_D(\gamma^q_t(s), \gamma(s))<\epsilon, \quad K_D(\gamma^p_t(s), \gamma(s))<\epsilon.
\end{equation}
 
Now, by Lemma \ref{quasi-lem},  there exist $A,B>0$ such that for all $t\in (0,1)$, the segments $[x,p(t)]$ and $[x,q(t)]$ are $(A,B)$-quasi-geodesics.  Since $(D,K_D)$ is Gromov hyperbolic by \cite[Theorem 1.4]{BB}, by Gromov's shadowing lemma (see \cite[Th\'eor\`eme 11 p. 86]{GH}), for every $A,B>0$ fixed, there exists $M>0$ such that for every $(A,B)$-quasi-geodesic $\eta$ there exists a (real) geodesic $\hat{\eta}$ for $K_D$ such that $\eta$ belongs to the hyperbolic neighborhood $\mathcal N_M(\hat{\eta}):=\{w\in D: \exists z\in \hat{\eta}, K_D(w, z)<\epsilon\}$. Therefore, for every $t\in (0,1)$, we have
\begin{equation}\label{equi2}
\gamma^p_t([0, R_t])\subset  \mathcal N_M([0,p(t)])\subset \mathcal N_M([0,p)), \quad \gamma^q_t([0, R'_t])\subset  \mathcal N_M([0,q(t)])\subset \mathcal N_M([0,q)).
\end{equation}
Now, let fix $s\in [0,\infty)$. By \eqref{eqi1}, for every $t\in (0,1)$ sufficiently close to $1$, we have $K_D(\gamma(s), \gamma_t^p(s))<M$, while, by \eqref{equi2}, we have $\gamma_t^p(s)\in  \mathcal N_M([0,p))$. Hence, by the triangle inequality, $\gamma(s)\in \mathcal N_{2M}([0,p))$. Similarly, arguing with $\gamma_t^q(s)$, we see that $\gamma(s)\in \mathcal N_{2M}([0,q))$. Therefore, for all $s\in [0,\infty)$ it holds
\[
\gamma(s)\in U:=\mathcal N_{2M}([0,p))\cap \mathcal N_{2M}([0,q)).
\]
However, we claim that $U$ is relatively compact in $D$. If this is the case, we clearly  obtain a contradiction because $\gamma([0,\infty))$ is not relatively compact in $D$. 

Suppose by contradiction that $U$ is not relatively compact in $D$. Hence there exists a sequence $\{w_j\}\subset U$ converging to the boundary of $D$. By definition, this means that there exist two sequences $\{t_j\}, \{t_j'\}\subset [0,1)$ such that $K_D(w_j, p(t_j))<2M$ and $K_D(w_j, q(t'_j))<2M$ for every $j\in \N$. Since $\{w_j\}$ converges to the boundary of $D$ and $D$ is complete hyperbolic, it follows that $t_j\to 1$ and $t_j'\to 1$ as $j\to \infty$, that is, $q(t'_j)\to q$ and $p(t_j)\to p$ as $j\to \infty$. But, by the triangle inequality, for every $j\in \N$ it holds $K_D(p(t_j), q(t_j'))\leq 4M$, contradicting Lemma  \ref{no-disc}. \end{proof}

\section{Horosphere boundary versus Gromov boundary}\label{Gromov}

Different types of topological boundaries may be defined in the general context of metric spaces. For instance the construction of the Gromov boundary, using geodesic rays or sequences, is based on the following

\begin{defi}\label{gro-equiv}
 Let $(X,d)$ be a  metric space and let $w \in X$ be a base point.
 
 \begin{itemize}
  \item [(i)] Given $x,y \in X$, the Gromov product of $x$ and $y$ with respect to $w$ is defined by
  $(x,y)_w:=\frac{1}{2}\left(d(x,w) + d(y,w) - d(x,y)\right).
  $
  \item [(ii)] A geodesic ray $\gamma:[0,+\infty)\to X$, with $\gamma(0)=w$, is an isometry from $([0,+\infty),| \cdot |)$ to $(X,d)$  where $| \cdot |$ denotes the absolute value on $\mathbb R$; namely if $l(\gamma([s,t]))$ denotes the length of the curve $\gamma([s,t])$, we have:
  $$
  \forall s,t \geq 0,\ l(\gamma([s,t])) = d(\gamma(s),\gamma(t)) = |t-s|.
  $$
  \item [(iii)] Two geodesic rays $\gamma$ and $\tilde \gamma$ are equivalent (we write $\gamma \sim_r \tilde \gamma$) if they are at finite Hausdorff distance from each other, {\sl i.e.}, if there exists $\exists C > 0$ such that for all $t \geq 0$, it holds $d(\gamma(t), \tilde\gamma(t)) \leq C$. The relation $\sim_r$ is an equivalence relation on the set of geodesic rays.
  
  \item [(iv)] A sequence $\{x^\nu\}_\nu$ of points in $X$ tends to infinity if $\lim_{\nu,\mu \rightarrow \infty}(x^\nu,x^\mu)_w = +\infty$.
  
  \item [(v)] Two sequences $\{x^\nu\}_\nu$ and $\{y^\nu\}_\nu$ of points in $X$ are equivalent (we write $\{x^\nu\} \sim_s \{y^\nu\}$) if $\lim_{\nu \rightarrow \infty}(x^\nu,y^\nu)_w = + \infty$.
  \end{itemize}
  \end{defi}
  
There are different ways to define the Gromov boundaries, using geodesic rays, or using sequences. Both are equivalent when $X$ is a geodesic, proper, hyperbolic (in the sense of Gromov) metric space.

\begin{defi}\label{gro-top}

Let $(X,d)$ be a geodesic, proper, metric space.
\begin{itemize}
\item[(i)] The Gromov boundary $\partial_G^r X$ (with respect to rays) is defined by
  $$
  \partial_G^r X :=E^r/ \sim_r
  $$
  where $E^r$ denotes the set of geodesic rays.
\item [(ii)] Let $p \in \partial^r_G X$ and let $r> 0$. We consider
$$
V(q,r):=\{q \in \partial_G^r X:\ \exists \gamma \in p,\ \exists \tilde \gamma \in q,\ \liminf_{s,t \rightarrow \infty}(\gamma(t),\tilde \gamma(s))_w \geq r\}. 
$$
Then $(V(p,r))_{p \in \partial_G^r X, r > 0}$ is a basis of open neighborhoods for the Gromov topology $\mathcal T_G^r(\partial_G^r X)$ on $\partial_G^r X$.
 \end{itemize}

\end{defi}

\begin{rem}
In the previous definitions it is not necessary that the domain be (Gromov) hyperbolic. However, if $(X,d)$ is not (Gromov) hyperbolic then the Gromov boundary $\partial_G^r(X)$ may fail to be Hausdorff. 
\end{rem}

One may also define the Gromov boundary using equivalent sequences:

\begin{defi}\label{gor-seq}
Let $(X,d)$ be a proper, hyperbolic (in the sense of Gromov) metric space.
 The Gromov boundary $\partial_G^s X$ (with respect to sequences) is defined by
 $$
 \partial_G^s X:=E^s/\sim_s
 $$
where $E^s$ denotes the set of sequences converging to infinity.

\end{defi}

\begin{rem}

\begin{itemize}
\item In case $(X,d)$ is proper, geodesic, (Gromov) hyperbolic, there exists a bijection between $\partial_G^r X$ and $\partial_G^s X$.
 \item If $(X,d)$ is not (Gromov) hyperbolic then $\sim_s$ may not be an equivalence relation. This is the case for the bidisc endowed with its Kobayashi distance $K_{\mathbb D^2}$: $\{(1-1/n,0)\} \sim_s \{(0,1-1/n)\}$, $\{(0,1-1/n)\} \sim_s \{(-1+1/n,0)\}$ but $\{(1-1/n,0)\} \not\sim_s \{(-1+1/n,0)\}$. In particular we can not define $\partial_G^s \mathbb D^2$.
\end{itemize}
\end{rem}

It is natural to study the relations between the Gromov boundary $\partial_G^r M$ and the horosphere boundary $\partial_H M$ for a complete (Kobayashi) hyperbolic manifold $(M,K_M)$.
Indeed, since $(M,K_M)$ is a length space, it follows from the Hopf-Rinow Theorem that any two points can be joined by a geodesic segment.

Let $\mathbb D^2:=\{(z_1,z_2) \in \mathbb C^n :\ |z_1|<1,|z_2|<1\}$ be the bidisc in $\mathbb C^n$. The remaining part of this section is dedicated to prove the following

\begin{prop}\label{bidisc-prop}
 The horosphere boundary and the Gromov boundary of the bidisc, endowed with their respective topologies, are not homeomorphic.
\end{prop}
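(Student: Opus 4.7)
The plan is to exhibit a topological invariant that separates the two spaces. Specifically, I will show that the horosphere topology on $\partial_H\D^2$ is indiscrete---only $\emptyset$ and $\partial_H\D^2$ are closed---whereas the Gromov ray topology on $\partial^r_G\D^2$ admits a non-empty proper open subset. Since indiscreteness is preserved by homeomorphisms between spaces with more than one point, no homeomorphism between the two boundaries can exist.

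For the horosphere side, I would classify admissible sequences in $\D^2$ using $K_{\D^2}((z_1,z_2),(w_1,w_2)) = \max\{K_\D(z_1,w_1),K_\D(z_2,w_2)\}$: up to the equivalence of Definition~\ref{equiv-def}, every admissible sequence yields a horosphere of one of three shapes, $E_0^\D(\alpha,R)\times\D$ (Type A$(\alpha)$, e.g.\ $(\alpha(1-1/n),0)$), $\D\times E_0^\D(\beta,R)$ (Type B$(\beta)$, e.g.\ $(0,\beta(1-1/n))$), or $E_0^\D(\alpha,R)\times E_0^\D(\beta,R)$ (Type C$(\alpha,\beta)$, e.g.\ $(\alpha(1-1/n),\beta(1-1/n))$). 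Two elementary observations then drive everything: every Type A horosphere meets every Type B horosphere in the non-empty rectangle $E_0^\D(\alpha,R)\times E_0^\D(\beta,R)$, and every Type C$(\alpha,\beta)$ horosphere is contained in both the Type A$(\alpha)$ and Type B$(\beta)$ horospheres. By Definition~\ref{conv-out-out}, the constant sequence at any Type A$(\alpha)$ point therefore converges in the horosphere topology to every Type A$(\alpha)$, Type B$(\beta')$ and Type C$(\alpha,\beta')$ point; moreover a sequence of Type B$(\beta_m)$ points with $\beta_m\to\beta^*$ converges to every Type A$(\alpha')$ and every Type C$(\alpha',\beta^*)$ point. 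Iterating the sequential closure operation at most three times from any $\underline{x}\in\partial_H\D^2$ exhausts all three types, so $\overline{\{\underline{x}\}}=\partial_H\D^2$, and the horosphere topology on $\partial_H\D^2$ is indiscrete.

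For the Gromov side, I would take the inequivalent geodesic rays $\gamma_1(t)=(\tanh t,0)$ and $\gamma_2(t)=(-\tanh t,0)$ from $(0,0)$ and prove $[\gamma_2]\notin V([\gamma_1],1)$; combined with the trivial membership $[\gamma_1]\in V([\gamma_1],1)$ (since $(\gamma_1(t),\gamma_1(s))_{(0,0)}=\min(s,t)\to\infty$), this furnishes the required proper open subset of $\partial^r_G\D^2$. The key is a rigidity step: for any $\gamma'=(a',b')\sim_r\gamma_1$ with Hausdorff constant $C_1$, the bound $K_\D(b'(t),0)\leq C_1$ confines $b'$ to a fixed Poincaré ball; the isometric-ray condition $K_{\D^2}((0,0),\gamma'(t))=t$ then forces $K_\D(0,a'(t))=t$ for large $t$, and the additional constraint $K_\D(a'(t),\tanh t)\leq C_1$ pins $a'(t)$ to an arc of Euclidean size $O(e^{-2t})$ on the Poincaré circle of radius $t$, so $a'(t)=\tanh(t)e^{i\phi(t)}$ with $|\phi(t)|=O(e^{-2t})$. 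The analogous statement yields, for any $\gamma''=(a'',b'')\sim_r\gamma_2$, that $a''(s)=-\tanh(s)e^{i\psi(s)}$ with $|\psi(s)|=O(e^{-2s})$. A second-order expansion of the Poincaré distance formula in $\D$ then gives $K_\D(a'(t),a''(s))=(t+s)-O(e^{-4\min(s,t)})$; since the second coordinates contribute only a bounded term, $K_{\D^2}(\gamma'(t),\gamma''(s))\geq(t+s)-O(e^{-4\min(s,t)})$ and therefore $(\gamma'(t),\gamma''(s))_{(0,0)}\to 0$, so $\liminf=0$ for every pair of representatives. The main obstacle is exactly this rigidity step: although $\sim_r$ permits arbitrarily large Hausdorff constants, the combination of boundedness of the second coordinate, the isometric geodesic-ray condition, and the exponential collapse of the Euclidean scale on large Poincaré circles conspires to force every equivalent representative asymptotically onto the real-axis geodesic, killing the Gromov product and yielding the open set that distinguishes the Gromov topology from the indiscrete horosphere topology.
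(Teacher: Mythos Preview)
Your proposal is correct and follows the paper's strategy exactly: show the horosphere topology on $\partial_H\D^2$ is indiscrete while the Gromov ray topology on $\partial_G^r\D^2$ admits a proper non-empty open set, using the same pair of antipodal rays $(\pm\tanh t,0)$. Your three-type classification of admissible sequences and the sequential-closure chain reproduce Lemma~\ref{pdisc-lem}, Proposition~\ref{top-prop} and Proposition~\ref{nonhaus} essentially verbatim.

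The one substantive difference is in the execution on the Gromov side. The paper notes that for any geodesic ray $\gamma'=(a',b')\sim_r\gamma_1$ from $(0,0)$, the bound on $b'$ forces $K_\D(0,a'(t))=t$ for all large $t$; by the triangle-equality argument this makes $a'$ an \emph{exact} geodesic ray in $\D$, and the Hausdorff bound then pins it to $a(t)=\tanh t$ identically. With $a'=a$ and $a''=-a$ exact, the Gromov product is computed to be $\leq 0$ for all $s,t$ with no asymptotics. Your route---bounding the angular defect $|\phi(t)|=O(e^{-2t})$ and expanding $K_\D$ to second order to get the product $\to 0$---reaches the same conclusion but with more work; on the other hand it does not rely on the structural fact that one coordinate of a bidisc geodesic ray must itself be a disc geodesic, which you may prefer if that fact is not already available to you.
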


We first start with a description of the horopheres of sequences converging to some boundary point in $\partial \mathbb D^2$. In order to avoid burdening notations, in this section, we simply write $E(\{u_n\},R)$ to denote the horospheres of an admissible sequence $\{u_n\}$ in $\D^2$ with respect to the base point $(0,0)$. 

\begin{lem}\label{pdisc-lem} Every sequence $\{u_n=(u_n^1,u_n^2)\}\subset \D^2$ converging to a point  $(p_1,p_2)\in\partial \D^2$ is admissible. Moreover, 
\begin{itemize}
\item[(i)] if  $p=(e^{it},p_2) \in \partial \mathbb D^2$ with $t\in \R$ and $|p_2|<1$ ({\sl respectively} $p=(p_1,e^{it}) \in \partial \mathbb D^2$ with $|p_1|<1$, $t\in \R$))  for every $R>0$ it holds
 \[
 E(\{u_n\},R) =  E_{\mathbb D}(e^{it},R) \times \mathbb D
 \]
  ({\sl respect.},  $E(\{u_n\},R) = \mathbb D \times E_{\mathbb D}(e^{it},R)$).
\item[(ii)] If $p=(e^{it_1},e^{it_2}) \in \partial \mathbb D^2$, $t_1,t_2\in \R$, let $T_1:=\limsup_n\left(\frac{1-|u_n^1|^2}{1-|u_n^2|^2}\right)$ and $T_2:=\limsup_n\left(\frac{1-|u_n^2|^2}{1-|u_n^1|^2}\right)$. Then, for every $R>0$:
\begin{itemize}
 \item[(a)] $(T_1 > 1, \ T_2 > 1) \ \Rightarrow E(\{u_n\},R) = E_\mathbb D(e^{it_1},R) \times E_\mathbb D(e^{it_2},R)$,
 \item[(b)] $(T_1 > 1, \ T_2 \leq 1) \ \Rightarrow E(\{u_n\},R) = E_\mathbb D(e^{it_1},R/T_2) \times E_\mathbb D(e^{it_2},R)$,
 \item[(c)] $(T_1 \leq 1, \ T_2 > 1) \ \Rightarrow E(\{u_n\},R) = E_\mathbb D(e^{it_1},R) \times E_\mathbb D(e^{it_2},R/T_1)$
\end{itemize}
with the convention $E_\mathbb D(e^{it_1},+\infty) = \mathbb D$.
\end{itemize}
\end{lem}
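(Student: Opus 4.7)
My strategy is a direct computation based on the product formula
\[
K_{\mathbb D^2}((z_1,z_2),(w_1,w_2)) = \max\{K_{\mathbb D}(z_1,w_1),\, K_{\mathbb D}(z_2,w_2)\}
\]
together with the disc horosphere limit $K_{\mathbb D}(w,v) - K_{\mathbb D}(0,v) \to \alpha(w) := \tfrac{1}{2}\log\tfrac{|e^{it}-w|^2}{1-|w|^2}$ as $v \to e^{it}$, which is essentially \eqref{exp-pio}. Throughout I take the base point $(0,0)$ and abbreviate $a_n := K_{\mathbb D}(0,u_n^1)$, $b_n := K_{\mathbb D}(0,u_n^2)$. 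Since $p \in \partial\mathbb D^2$, at least one of $|p_1|,|p_2|$ equals $1$, so $\max\{a_n,b_n\} \to \infty$, which gives part (i) of Definition \ref{admis-def}; nonemptiness of the horospheres computed below will then yield full admissibility.

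For case (i), assume $|p_2|<1$ (the other alternative is symmetric). Then $\{u_n^2\}$ is relatively compact in $\mathbb D$, hence both $K_{\mathbb D}(0,u_n^2)$ and $K_{\mathbb D}(w_2,u_n^2)$ remain bounded while $a_n \to \infty$. Consequently, for $n$ large,
\[
K_{\mathbb D^2}((0,0), u_n) = a_n, \qquad K_{\mathbb D^2}((w_1,w_2), u_n) = K_{\mathbb D}(w_1, u_n^1),
\]
so the $\limsup$ of the difference equals $\alpha_1(w_1) := \tfrac{1}{2}\log\tfrac{|e^{it}-w_1|^2}{1-|w_1|^2}$, immediately giving $E(\{u_n\},R) = E_{\mathbb D}(e^{it},R) \times \mathbb D$.

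For case (ii) both $a_n,b_n \to \infty$. Using $|u_n^i| = \tanh(K_{\mathbb D}(0,u_n^i))$, I compute $\tfrac{1-|u_n^1|^2}{1-|u_n^2|^2} = \cosh^2(b_n)/\cosh^2(a_n) = e^{2\epsilon_n}(1+o(1))$ with $\epsilon_n := b_n - a_n$, so $\tfrac{1}{2}\log T_1 = \limsup_n \epsilon_n$ and $\tfrac{1}{2}\log T_2 = -\liminf_n \epsilon_n$. Applying the elementary identity $\max(A,B) - t = \max(A-t, B-t)$ with $t = \max(a_n,b_n)$ yields
\[
F_n := K_{\mathbb D^2}((w_1,w_2), u_n) - K_{\mathbb D^2}((0,0), u_n) = \max\bigl\{f_n(w_1) - (\epsilon_n)_+,\ g_n(w_2) - (-\epsilon_n)_+\bigr\},
\]
where $f_n(w_1) := K_{\mathbb D}(w_1,u_n^1) - a_n \to \alpha_1(w_1)$ and $g_n(w_2) := K_{\mathbb D}(w_2,u_n^2) - b_n \to \alpha_2(w_2)$. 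Using $\limsup_n \max(A_n,B_n) = \max(\limsup_n A_n, \limsup_n B_n)$ together with $\liminf_n (\epsilon_n)_+ = \max(0,\liminf_n \epsilon_n)$ and $\liminf_n (-\epsilon_n)_+ = \max(0,-\limsup_n \epsilon_n)$, I then obtain
\[
\limsup_n F_n = \max\bigl\{\alpha_1(w_1) - \max(0,\liminf_n \epsilon_n),\ \alpha_2(w_2) - \max(0,-\limsup_n \epsilon_n)\bigr\}.
\]
The three subcases now fall out from the sign patterns of $\liminf_n \epsilon_n$ and $\limsup_n \epsilon_n$: in (a) both ``$\max(0,\cdot)$'' terms vanish, giving the product of two horospheres of radius $R$; in (b) one has $\liminf_n \epsilon_n = -\tfrac{1}{2}\log T_2 \geq 0$, so the first argument shifts by $+\tfrac{1}{2}\log T_2$, which turns the condition on $w_1$ into $w_1 \in E_{\mathbb D}(e^{it_1}, R/T_2)$, while the second remains unchanged since $\limsup_n \epsilon_n > 0$; (c) is the mirror statement of (b). In every case the resulting set is a nonempty product of disc horospheres (with the convention $E_{\mathbb D}(e^{it},+\infty) = \mathbb D$ when $T_i = +\infty$), which completes the admissibility check. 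The main obstacle will be the careful bookkeeping in case (ii): justifying the interchange $\limsup \max = \max \limsup$, keeping track of the $(\cdot)_+$-sequences, and handling the degenerate values $T_i \in \{1,+\infty\}$.
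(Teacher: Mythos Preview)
Your approach is correct and matches the paper's: both arguments reduce everything to the product formula for $K_{\mathbb D^2}$ and the one-variable horosphere limit, arriving at the same closed expression
\[
\limsup_n F_n = \tfrac{1}{2}\log\max\Bigl(\tfrac{|w_1-e^{it_1}|^2}{1-|w_1|^2}\min(1,T_2),\ \tfrac{|w_2-e^{it_2}|^2}{1-|w_2|^2}\min(1,T_1)\Bigr),
\]
from which (a)--(c) follow by case splitting. The only difference is that the paper imports this formula from \cite[pp.~264--266]{Aba}, whereas you rederive it via the substitution $\epsilon_n=b_n-a_n$ and the identities $\limsup_n\max(A_n,B_n)=\max(\limsup_n A_n,\limsup_n B_n)$ and $\liminf_n(\epsilon_n)_+=\max(0,\liminf_n\epsilon_n)$; both identities are elementary and your bookkeeping with $(\cdot)_+$ is accurate, including the degenerate cases $T_i\in\{1,+\infty\}$.
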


\begin{proof}
 (i). Let $p=(e^{it},p_2) \in \partial \mathbb D^2$ with $|p_2|<1$. Then for $R>0$, for $w=(w_1,w_2) \in \mathbb D$ and for sufficiently large $n$:
 $$
 K_{\mathbb D^2}(u_n,w)-K_{\mathbb D^2}(u_n,0) = K_{\mathbb D}(u_n^1,w_1)-K_{\mathbb D}(u_n^1,0).
 $$
 Hence $E_{\mathbb D^2}(\{u_n\},R)=E_{\mathbb D}(\{u_n^1\},R) \times \mathbb D$.  

 (ii). By definition
 $$
 K_{\mathbb D^2}(u_n,w) - K_{\mathbb D^2}(u_n,0) = \max\left(K_{\mathbb D}(u_n^1,w_1),K_{\mathbb D}(u_n^2,w_2)\right) - \max\left(K_{\mathbb D}(u_n^1,0),K_{\mathbb D}(u_n^2,0)\right).
 $$
 Following \cite[pp.264-266]{Aba}, we obtain
 \begin{equation}\label{horo-eq}
 \begin{split}
 & \limsup_{n\to \infty}(K_{\mathbb D^2}(u_n,w)-K_{\mathbb D^2}(u_n,0))\\ &= 
  \frac{1}{2}\log\left(\max\left(\frac{|w_1-e^{it_1}|^2}{1-|w_1|^2}\min(1,T_2),
 \frac{|w_2-e^{it_2}|^2}{1-|w_2|^2}\min(1,T_1)\right)\right).
\end{split}
\end{equation}
 Hence

$$
w \in E(\{u_n\},R) \Leftrightarrow \left\{
\begin{array}{lll}
 \displaystyle \frac{|w_1-e^{it_1}|^2}{1-|w_1|^2}\min(1,T_2) & < & R\\
  & & \\
  \displaystyle \frac{|w_2-e^{it_2}|^2}{1-|w_2|^2}\min(1,T_1) & < & R
\end{array}
\right.
$$

The conclusion follows directly from that equivalence.
\end{proof}

Now we show that, in fact, we can restrict ourselves to particular admissible sequences converging to a boundary point:

\begin{prop}\label{top-prop}
Let $\{u_n\}$ be an admissible sequence in $\D^2$. Then $\{u_n\}$ is equivalent to one and only one of the following sequences:
\begin{enumerate}
\item $\{w^{(1)}_n(p_1,p_2):=(p_1(1-\frac{1}{n}), p_2(1-\frac{1}{n}))\}_{n\in \N}$ for some $p_1,p_2\in \partial \D$, 
\item $\{w^{(2)}_n(p):=(p(1-\frac{1}{n}), 0)\}_{n\in \N}$ for some $p\in \partial \D$,
\item $\{w^{(3)}_n(p):=(0,  p(1-\frac{1}{n}))\}_{n\in \N}$ for some $p\in \partial \D$.
\end{enumerate}
In particular, every admissible sequence in $\D^2$ is equivalent to an admissible sequence in $\D^2$ which converges to a point in $\partial\D^2$.
\end{prop}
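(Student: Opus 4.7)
The plan is to reduce a general admissible sequence $\{u_n=(u_n^1,u_n^2)\}\subset\D^2$ to a convergent one and then apply Lemma~\ref{pdisc-lem} to match the horospheres with one of the three model sequences $\{w^{(i)}_n\}$.

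First, I would extract a subsequence (still denoted $\{u_n\}$) such that $u_n^1\to p_1\in\overline{\D}$, $u_n^2\to p_2\in\overline{\D}$, and $(1-|u_n^1|^2)/(1-|u_n^2|^2)$ converges in $[0,+\infty]$; the admissibility condition $K_{\D^2}(0,u_n)=\max(K_\D(0,u_n^1),K_\D(0,u_n^2))\to\infty$ forces at least one of $p_1,p_2$ to lie on $\partial\D$. The crucial point---and the main obstacle---is to verify that this extracted subsequence is actually equivalent to the original $\{u_n\}$ in the sense of Definition~\ref{equiv-def}. For this I would show that admissibility is incompatible with multiple boundary cluster points in either coordinate: if $\{u_n^1\}$ had two distinct cluster points $p_1,p_1'\in\partial\D$, then formula~\eqref{horo-eq} applied along the two extracting subsequences would impose, for $w\in E_x(\{u_n\},R)$, conditions of the type $h_{p_1}(w_1)\cdot\kappa<R$ and $h_{p_1'}(w_1)\cdot\kappa'<R$ that become jointly unsatisfiable for sufficiently small $R$, violating admissibility. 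A similar combinatorial check handles oscillations of the ratio $T$ and ensures the extraction does not change the equivalence class.

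Second, I apply Lemma~\ref{pdisc-lem} to the (now convergent and equivalent) sequence. If $p_1\in\D$ then part~(i) gives $E_x(\{u_n\},R)=\D\times E_\D(p_2,R)=E_x(\{w^{(3)}_n(p_2)\},R)$, showing equivalence to case~(3), and the symmetric situation yields~(2). If $(p_1,p_2)\in\partial\D\times\partial\D$, part~(ii) writes $E_x(\{u_n\},R)=E_\D(p_1,\lambda R)\times E_\D(p_2,\mu R)$ for constants $\lambda,\mu\in(0,+\infty]$ depending on $T_1,T_2$; when $\lambda,\mu$ are both finite a change of $R$ yields equivalence to $\{w^{(1)}_n(p_1,p_2)\}$ (whose horospheres are $E_\D(p_1,R)\times E_\D(p_2,R)$), and when one of $\lambda,\mu$ is $+\infty$ the corresponding factor is all of $\D$, falling back into case~(2) or~(3).

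Third, the ``and only one'' assertion is a direct computation: the three families of horospheres are distinguishable by projecting onto the two factors, since $E_\D(p,R)$ is a proper subset of $\D$ for every $R>0$, so no rescaling of $R$ can equate, e.g., $E_\D(p,R)\times\D$ with $E_\D(p,R)\times E_\D(q,R)$. The concluding sentence of the proposition then follows since each $\{w^{(i)}_n\}$ manifestly converges to a point of $\partial\D^2$.
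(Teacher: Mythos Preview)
Your reduction to a single convergent subsequence has a genuine gap: an admissible sequence in $\D^2$ can have convergent subsequences that are \emph{not} equivalent to it, so picking one subsequence and identifying its horospheres does not determine the equivalence class of the original sequence. Concretely, take
\[
u_{2k}=\bigl(p_1(1-\tfrac{1}{2k}),\,p_2(1-\tfrac{1}{2k})\bigr),\qquad
u_{2k+1}=\bigl(p_1(1-\tfrac{1}{2k+1}),\,p_2(1-\tfrac{1}{(2k+1)^2})\bigr),
\]
with $p_1,p_2\in\partial\D$. Both coordinates converge (to $p_1,p_2$), so your cluster-point argument does not trigger; the ratio $(1-|u_n^1|^2)/(1-|u_n^2|^2)$ oscillates between $1$ and $\infty$. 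Along the odd subsequence $T_1=\infty$, $T_2=0$, and Lemma~\ref{pdisc-lem}(ii)(b) gives horospheres $\D\times E_\D(p_2,R)$, i.e.\ the class of $\{w_n^{(3)}(p_2)\}$. But for the full sequence $T_1=\infty$, $T_2=1$, giving $E_\D(p_1,R)\times E_\D(p_2,R)$, i.e.\ the class of $\{w_n^{(1)}(p_1,p_2)\}$. Thus the odd subsequence satisfies all your extraction criteria yet lies in the wrong equivalence class; the ``combinatorial check'' you allude to cannot save this, because such oscillation of the ratio is perfectly compatible with admissibility. Your claim that two boundary cluster points of $u_n^1$ contradict admissibility is likewise false: if $u_n^2$ approaches $\partial\D$ much faster than $u_n^1$, the factor $\min(1,T_2)$ in \eqref{horo-eq} vanishes and the first-coordinate constraint disappears entirely.

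The paper's proof avoids this trap by never reducing to a single subsequence. It uses the identity $E(\{u_n\},R)=\bigcap_{\{v_n\}\in\mathcal M}E(\{v_n\},R)$ (equation~\eqref{int-hor2}), where $\mathcal M$ is the set of \emph{all} convergent subsequences, and then performs a case analysis on the possible shapes of this intersection using Lemma~\ref{pdisc-lem}. The key observation is that admissibility forces any two members of $\mathcal M$ to have intersecting horospheres for every $R$, which severely constrains the combinatorics; one then finds the smallest horospheres occurring among the $\{v_n\}\in\mathcal M$ and reads off which model sequence $\{w_n^{(i)}\}$ matches. To repair your argument you would need to select, among all convergent subsequences, one whose horospheres are \emph{minimal} (equivalently, whose $\min(1,T_1)$ and $\min(1,T_2)$ are maximal), and then argue that this minimal one governs the intersection --- which is essentially the paper's Case~1/Case~2/Case~3 analysis.
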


\begin{proof}
First of all, it is clear by Lemma \ref{pdisc-lem} that the  horospheres of $\{w_n^{(1)}(p_1,p_2)\}$ are $E_\D(p_1, R)\times E_\D(p_2, R)$, the horospheres of $\{w_n^{(2)}(p)\}$ are $E(p, R)\times \D$ and of $\{w_n^{(3)}(p)\}$ are $\D \times E(p, R)$, for all $R>0$. Hence the sequences $\{w_n^{(1)}(p_1,p_2)\}$, $\{w_n^{(2)}(p)\}$ and $\{w_n^{(3)}(p)\}$ are not equivalent.

 Then, as in \eqref{int-hor}, we have
\begin{equation}\label{int-hor2}
E(\{u_n\}, R)=\bigcap_{\{v_n\}\in \mathcal M} E(\{v_n\}, R),
\end{equation}
where $\mathcal M$ denotes the set of all subsequences of $\{u_n\}$ converging to a boundary point. In particular, every sequence $\{v_n\}\subset \mathcal M$ is admissible (by \eqref{int-hor2} or by Lemma \ref{pdisc-lem}).  Note that \eqref{int-hor2} implies that if $\{v_n\}, \{\tilde{v}_n\}\in \mathcal M$ then for every $R>0$ it holds
\begin{equation}\label{inter-pol}
E(\{v_n\}, R)\cap E(\{\tilde{v}_n\}, R)\neq \emptyset.
\end{equation}

The horospheres of converging sequences are described in Lemma \ref{pdisc-lem}. There are different cases to be considered:

{\sl Case 1.} Assume that there exist $p_1, p_2\in \partial \D$, $\{v_n\}, \{\tilde{v}_n\}\in \mathcal M$,  (possibly $\{v_n\}=\{\tilde{v}_n\}$) and $T\in (0,1]$ such that  
\[
E(\{v_n\}, R)\cap E(\{\tilde{v}_n\}, R)=E_\D(p_1, R)\times E_\D(p_2,R/T)
\]
 for every $R>0$. 

We claim that $\{u_n\}$ is equivalent to the sequence $\{w_n^{(1)}(p_1,p_2)\}$. Indeed, taking into account \eqref{inter-pol} and the list in Lemma~\ref{pdisc-lem},  we see that if $\{z_n\}\in \mathcal M$, then either  $E(\{z_n\}, R)=E_{\mathbb D}(p_1,R) \times E_{\mathbb D}(p_2,R/Q)$  or $E(\{z_n\}, R)=E_{\mathbb D}(p_1,R/Q) \times E_{\mathbb D}(p_2,R)$ for some $Q\in [0,1]$ (with the usual convention that, if $Q=0$ then $E_{\mathbb D}(p_1,R/Q)=\D$). In any case, $E(\{w^{(1)}_n(p_1,p_2)\}, R)\subset E(\{z_n\}, R)$. Therefore from \eqref{int-hor2}, 
\begin{equation*}
E_\D(p_1, R)\times E_\D(p_2, R)\subset E(\{u_n\}, R)\subset E_{\mathbb D}(p_1,R) \times E_{\mathbb D}(p_2,R/T). 
\end{equation*}
In particular, it is easy to see that for every $R>0$ there exists $R'>0$ such that $E(\{w_n^{(1)}(p_1,p_2)\}, R)\subset E(\{u_n\}, R)\subset E(\{w_n^{(1)}(p_1,p_2)\}, R')$. Hence $\{u_n\}$ is equivalent to $\{w^{(1)}_n(p_1,p_2)\}$.

{\sl Case 2.} Assume that there exist $p_1, p_2\in \partial \D$, $\{v_n\}, \{\tilde{v}_n\}\in \mathcal M$,  (possibly $\{v_n\}=\{\tilde{v}_n\}$) and $T\in (0,1]$ such that  
\[
E(\{v_n\}, R)\cap E(\{\tilde{v}_n\}, R)=E_\D(p_1, R/T)\times E_\D(p_2,R)
\]
 for every $R>0$. 

In this case the argument goes exactly as in Case 1,  and $\{u_n\}$ is equivalent to $\{w^{(1)}_n(p_1,p_2)\}$.
 
{\sl Case 3}. There are no sequences $\{v_n\}, \{\tilde{v}_n\}$ in $\mathcal M$ as in Case 1 or Case 2. Hence, by Lemma \ref{pdisc-lem},   for all  $\{v_n\}\in \mathcal M$ there exists $p_{\{v_n\}}\in \partial \D$ such that either $E(\{v_n\}, R)=E_\D(p_{\{v_n\}}, R)\times \D$, or $E(\{v_n\}, R)=\D\times E_\D(p_{\{v_n\}}, R)$  for all $R>0$.  Moreover, since we are excluding Case 1 and Case 2, if for some $\{v_n\}\in \mathcal M$ it holds $E(\{v_n\}, R)=E_\D(p_{\{v_n\}}, R)\times \D$, then for all $\{\tilde{v}_n\}\in \mathcal M$ it holds $E(\{\tilde{v}_n\}, R)=E_\D(p_{\{\tilde{v}_n\}}, R)\times \D$.  By \eqref{inter-pol},  there exists $p\in \partial \D$ such that $p_{\{v_n\}}=p$ for all $\{v_n\}\in \mathcal M$, hence $E(\{u_n\}, R)=E(p,R)\times \D$, and $\{u_n\}$ is equivalent to $\{u_n\}$ is equivalent to $\{w^{(2)}_n(p)\}$. 

Similarly, if $E(\{v_n\}, R)=\D\times E_\D(p_{\{v_n\}}, R)$, it follows that $\{u_n\}$ is equivalent to $\{w^{(3)}_n(p)\}$. 
\end{proof}

Now we are ready to prove the main result of this section:

\begin{prop}\label{nonhaus}
The horosphere topology of $\partial_H\mathbb D^2$ induced from  $\widehat{\mathbb D^2}$ is trivial.
\end{prop}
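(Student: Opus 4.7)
The plan is to show that for every point $\underline{y}_0 \in \partial_H \D^2$, the topological closure $\overline{\{\underline{y}_0\}}$ in the horosphere topology of $\widehat{\D^2}$ contains all of $\partial_H \D^2$. Since every closed subset of $\partial_H \D^2$ in the subspace topology is the trace on $\partial_H \D^2$ of a closed subset of $\widehat{\D^2}$, this will immediately yield that the only closed subsets of $\partial_H \D^2$ are $\emptyset$ and $\partial_H \D^2$, proving the topology is trivial.

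First I would invoke Proposition \ref{top-prop} to reduce to three canonical representatives: every $\underline{y} \in \partial_H \D^2$ is of exactly one of the forms $[p_1,p_2]_1 := [\{w_n^{(1)}(p_1,p_2)\}]$, $[p]_2 := [\{w_n^{(2)}(p)\}]$, or $[q]_3 := [\{w_n^{(3)}(q)\}]$. By Lemma \ref{pdisc-lem} the corresponding horospheres (with base point $(0,0)$) at radius $R$ are the explicit products $E_\D(p_1,R) \times E_\D(p_2,R)$, $E_\D(p,R) \times \D$, and $\D \times E_\D(q,R)$ respectively.

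The central step is to tabulate which pairs of canonical horospheres meet at every radius $R > 0$. Direct inspection shows that, for every $p, q, p_1, p_2 \in \partial \D$ and every $R > 0$, the horospheres of $[p]_2$ and $[q]_3$ meet in $E_\D(p,R) \times E_\D(q,R) \neq \emptyset$; the horospheres of $[p]_2$ and $[p,q]_1$ meet in the same product set for every $q$; and similarly for $[q]_3$ and $[p,q]_1$ for every $p$. Applying Definition \ref{conv-out-out} with constant sequences of canonical representatives, this yields the following one-step convergences in the horosphere topology: the constant sequence at $[p]_2$ converges to every $[q]_3$ and to every $[p,q]_1$; the constant sequence at $[q]_3$ converges to every $[p]_2$ and to every $[p,q]_1$; and the constant sequence at $[p,q]_1$ converges to $[p]_2$ and to $[q]_3$.

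I would conclude by iterating sequential closure. If $C \subseteq \widehat{\D^2}$ is a closed set containing $\underline{y}_0 = [p]_2$, then sequential closedness of $C$ forces every $[q]_3$ and every $[p,q]_1$ into $C$ after the first iteration; a second iteration applied to any $[q_0]_3 \in C$ adjoins every $[p']_2$ and every $[p',q_0]_1$, which together with the previous step exhaust $\partial_H \D^2$. Analogous chains of length at most three treat the cases where $\underline{y}_0$ is of type (1) or (3). Since every iterated sequential closure of a set $A$ is contained in the topological closure $\overline{A}$ (because $\overline{A}$ is, by construction, sequentially closed), this yields $\overline{\{\underline{y}_0\}} \supseteq \partial_H \D^2$. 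I do not foresee a serious obstacle beyond the bookkeeping of this reachability chain, as the horospheres of canonical representatives are explicit products of one-dimensional horodiscs in $\D$ and no delicate estimate is required; the only minor subtlety is to observe that the "$\neq \emptyset$" conclusion survives for every $R$ precisely because $E_\D(\zeta,R) \neq \emptyset$ at every radius.
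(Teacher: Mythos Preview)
Your proposal is correct and follows essentially the same route as the paper: reduce via Proposition~\ref{top-prop} to the three canonical families, compute the product horospheres explicitly via Lemma~\ref{pdisc-lem}, use constant sequences in Definition~\ref{conv-out-out} to obtain one-step convergences, and chain closures to conclude that the closure of any single point is all of $\partial_H\D^2$. The only minor difference is that you also record the direct intersection $E_\D(p,R)\times\D \;\cap\; \D\times E_\D(q,R)\neq\emptyset$ between type~(2) and type~(3) horospheres, which slightly shortens some chains; the paper instead routes everything through type~(1) via its observations A), B), C), but the content is the same.
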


\begin{proof}
We are going to show that  the only non empty closed subset of $\partial_H\mathbb D^2$ is  $\partial_H\mathbb D^2$.  In order to do that, we prove that the closure in $\partial_H\D^2$ of the class of any admissible sequence is $\partial_H\mathbb D^2$.

Since by Proposition \ref{top-prop} every admissible sequence is equivalent to  $\{w^{(1)}_n(p_1,p_2)\}$, $\{w^{(2)}_n(p)\}$ or $\{w^{(3)}_n(p)\}$, we only need to consider those sequences. 

In fact,  the following observations yield immediately the result:
\begin{itemize}
\item[A)] for every $p_1, p_2\in \partial \D$, the closure of the point $[\{w^{(1)}_n(p_1,p_2)\}]\in \partial_H\D^2$ contains the points $[\{w_n^{(2)}(p_1)\}]$ and $[\{w_n^{(3)}(p_2)\}]$. 
\item [B)] for every $p\in \partial \D$,  the closure of the point $[\{w^{(2)}_n(p)\}]\in \partial_H\D^2$ contains the points $[\{w^{(1)}_n(p,q_2)\}]$ for all $q_2\in \partial \D$.
\item[C)] for every $p\in \partial \D$,  the closure of the point $[\{w^{(3)}_n(p)\}]\in \partial_H\D^2$ contains the points $[\{w^{(1)}_n(q_1,p)\}]$ for all $q_1\in \partial \D$.
\end{itemize}
Indeed, it follows from A), B) and C) that the closure of the point $[\{w^{(1)}_n(p_1,p_2)\}]\in \partial_H\D^2$ also contains the points $[\{w^{(1)}_n(q_1,q_2)\}]$ for all $q_1, q_2 \in \partial \D$, the closure of the point $[\{w^{(2)}_n(p_1)\}]\in \partial_H\D^2$ also contains the points $[\{w^{(2)}_n(q_1)\}]$ for all $q_1 \in \partial \D$, and the closure of the point $[\{w^{(3)}_n(p_2)\}]\in \partial_H\D^2$ also contains the points $[\{w^{(3)}_n(q_2)\}]$ for all $q_2 \in \partial \D$.

We only show A), the proofs of B) and C) being similar.  By Lemma \ref{pdisc-lem}, it follows that for all $R>0$, 
\begin{equation*}
E(\{w_n^{(1)}(p_1,p_2)\}, R)\cap E(\{w_n^{(2)}(p_1)\}, R)\neq \emptyset.
\end{equation*}
 Hence, if we define the constant sequence $\{\underline{x_m}\}$ of elements in $\partial_H \mathbb D^2$  by $\underline{x_m} := [\{w_n^{(1)}(p_1,p_2)\}]$ for every $m\in \N$, then according to Definition~\ref{conv-out-out} the sequence $\{\underline{x_m}\}$ converges to $[\{w_n^{(2)}(p_1)\}]\in \partial_H\D^2$ when $m$ tends to infinity. Hence $[\{w_n^{(2)}(p_1)\}]$ belongs to the closure of $[\{w^{(1)}_n(p_1,p_2)\}]$. Similarly, $[\{w_n^{(3)}(p_2)\}]$ belongs to the closure of $[\{w^{(1)}_n(p_1,p_2)\}]$.
\end{proof}

 As an application of Theorem~\ref{extension}, Theorem~\ref{main-pseudo} and Proposition~\ref{nonhaus} we obtain a different proof of the well-known result due to H. Poincar\'e

\begin{prop}\label{nonbihol-prop}
There is no biholomorphism between the unit ball and the bidisc in $\mathbb C^2$.
\end{prop}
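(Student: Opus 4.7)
The plan is to derive a contradiction from the assumption of a biholomorphism $F:\mathbb{B}^2\to\mathbb{D}^2$ by showing that the horosphere topologies of the two sides are incompatible. The unit ball is a bounded strongly pseudoconvex domain with real-analytic (in particular $C^3$) boundary, so Theorem \ref{main-pseudo} applies to give a homeomorphism $\Theta:\widehat{\mathbb{B}^2}\to\overline{\mathbb{B}^2}$ between the horosphere compactification and the Euclidean closure. In particular, the horosphere topology on $\widehat{\mathbb{B}^2}$ is Hausdorff, and the subspace topology induced on $\partial_H\mathbb{B}^2$ contains at least two disjoint nonempty open sets (since $\partial\mathbb{B}^2$ does).

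Assume for contradiction that there exists a biholomorphism $F:\mathbb{B}^2\to\mathbb{D}^2$. By Theorem \ref{extension}, $F$ extends to a homeomorphism $\hat{F}:\widehat{\mathbb{B}^2}\to\widehat{\mathbb{D}^2}$ with respect to the horosphere topologies, and $\hat{F}$ maps $\partial_H\mathbb{B}^2$ bijectively onto $\partial_H\mathbb{D}^2$, hence restricts to a homeomorphism between these two boundary spaces (with their subspace horosphere topologies).

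On the other hand, Proposition \ref{nonhaus} shows that the horosphere topology induced on $\partial_H\mathbb{D}^2$ is the trivial (indiscrete) topology: the only nonempty closed subset is $\partial_H\mathbb{D}^2$ itself. Since $\partial_H\mathbb{D}^2$ contains more than one point (for example, the distinct classes $[\{w_n^{(2)}(1)\}]$ and $[\{w_n^{(3)}(1)\}]$ are not equivalent by the analysis in Proposition \ref{top-prop}), this trivial topology is not Hausdorff. However, the corresponding subspace topology on $\partial_H\mathbb{B}^2\simeq\partial\mathbb{B}^2$ \emph{is} Hausdorff by Theorem \ref{main-pseudo}. A homeomorphism cannot map a Hausdorff space onto a non-Hausdorff space, giving the desired contradiction.

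The main conceptual point is the contrast between the regularity of the ball (which, via horosphere theory, recovers the Euclidean closure) and the degeneracy of the polydisc (whose horosphere boundary is collapsed by the merging behavior of product horospheres described in Proposition \ref{top-prop}). There is no serious obstacle: all ingredients are already in place, and the proof amounts to observing that Hausdorffness is a topological invariant that distinguishes $\widehat{\mathbb{B}^2}$ from $\widehat{\mathbb{D}^2}$.
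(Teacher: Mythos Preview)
Your proof is correct and follows essentially the same approach as the paper: invoke Theorem~\ref{extension} to extend a putative biholomorphism to a homeomorphism of horosphere closures, then use Theorem~\ref{main-pseudo} (Hausdorff boundary for the ball) and Proposition~\ref{nonhaus} (trivial topology on $\partial_H\mathbb{D}^2$) to derive a contradiction. You have simply spelled out the details---in particular, that $\partial_H\mathbb{D}^2$ has more than one point so that triviality genuinely forces non-Hausdorffness---which the paper leaves implicit.
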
 

\begin{proof}
 A biholomorphism between the unit ball and the bidisc would extends as a homeomorphism between their horosphere boundaries. This is not possible by Propositions~\ref{main-pseudo} and \ref{nonhaus}.
\end{proof}

In fact, one can use the previous arguments to show that the restriction of the horosphere topology of the polidisc $\D^n\subset \C^N$ to $\partial_H\D^n$ for every $n>1$ is not Hausdorff. Hence, by Theorem~\ref{extension} and Theorem~\ref{main-pseudo} one can see also that there exist no biholomorphisms between any bounded strongly pseudoconvex domain and the polydisc in $\C^N$, $n>1$.
 
In order to compare the Gromov boundary with the horosphere boundary of $\D^2$, we will prove now the following
\begin{prop}\label{nonhaus-gro}
The topology of $\partial_G^r \mathbb D^2$ is not trivial.
\end{prop}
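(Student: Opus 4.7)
The plan is to exhibit distinct points $p,q\in \partial_G^r \D^2$ and a radius $r>0$ with $q\notin V(p,r)$; then $V(p,r)$ is a nonempty proper open set and the topology is not trivial. I take the two unit-speed geodesic rays from $w:=(0,0)$
\[
\gamma_1(t):=(\tanh t,0),\qquad \gamma_2(t):=(-\tanh t,0),
\]
each isometric from $[0,+\infty)$ into $(\D^2,K_{\D^2})$ since $K_{\D^2}((0,0),(\pm\tanh t,0))=K_\D(0,\tanh t)=t$. They are not $\sim_r$-equivalent because $K_{\D^2}(\gamma_1(t),\gamma_2(t))=K_\D(\tanh t,-\tanh t)=2t\to\infty$, so $[\gamma_1]\neq[\gamma_2]$.

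The central step is to prove that for every $\tilde\gamma_1\in[\gamma_1]$ and $\tilde\gamma_2\in[\gamma_2]$,
\[
\liminf_{s,t\to\infty}(\tilde\gamma_1(t),\tilde\gamma_2(s))_w=0.
\]
Writing $\tilde\gamma_i(t)=(a_i(t),b_i(t))$, the Hausdorff bound $K_{\D^2}(\tilde\gamma_i(t),\gamma_i(t))\leq C_i$ translates into $K_\D(a_i(t),(-1)^{i-1}\tanh t)\leq C_i$ and $K_\D(b_i(t),0)\leq C_i$. Since $\tilde\gamma_i$ is a unit-speed ray from $w$ in the max-metric, $\max(K_\D(0,a_i(t)),K_\D(0,b_i(t)))=t$, and the boundedness of $b_i(t)$ forces $|a_i(t)|=\tanh t$ for every $t>C_i$. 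Hence $a_i(t)=(-1)^{i-1}\tanh t\cdot e^{i\theta_i(t)}$, and a direct estimate of the Poincar\'e distance on the circle $|z|=\tanh t$ shows that the constraint $K_\D(\tanh t\cdot e^{i\theta},\tanh t)\leq C_i$ forces $|\theta_i(t)|=O(e^{-2t})$.

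A computation in the Poincar\'e disc (apply the M\"obius map sending $a_1(t)$ to $0$) then yields
\[
K_\D(a_1(t),a_2(s))=(t+s)-O\bigl((\theta_1(t)-\theta_2(s))^2\bigr)=(t+s)-o(1)
\]
as $s,t\to\infty$, geometrically because $a_1(t)$ and $a_2(s)$ lie asymptotically on opposite sides of $0$ on the hyperbolic circles of radii $t$ and $s$. Meanwhile $K_\D(b_1(t),b_2(s))\leq C_1+C_2$ remains bounded, so for $s+t$ large the first coordinate dominates and $K_{\D^2}(\tilde\gamma_1(t),\tilde\gamma_2(s))=K_\D(a_1(t),a_2(s))=(t+s)-o(1)$. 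Consequently
\[
(\tilde\gamma_1(t),\tilde\gamma_2(s))_w=\tfrac{1}{2}\bigl(t+s-K_{\D^2}(\tilde\gamma_1(t),\tilde\gamma_2(s))\bigr)\longrightarrow 0,
\]
so the liminf is $0$ and $[\gamma_2]\notin V([\gamma_1],r)$ for every $r>0$.

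Finally, taking $\gamma=\tilde\gamma=\gamma_1$ yields $(\gamma_1(t),\gamma_1(s))_w=\min(t,s)\to\infty$, so $[\gamma_1]\in V([\gamma_1],r)$ for every $r>0$. Hence $V([\gamma_1],r)$ is a nonempty proper open subset of $\partial_G^r \D^2$, and $\mathcal T_G^r(\partial_G^r\D^2)$ is not trivial. The main technical difficulty is the quantitative angular rigidity $|\theta_i(t)|=O(e^{-2t})$ for admissible deformations on the hyperbolic circles of radius $t$; this forces the asymptotic saturation $K_\D(a_1(t),a_2(s))=(t+s)-o(1)$ and ultimately the vanishing of the Gromov product.
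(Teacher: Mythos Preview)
Your proof is correct and follows the same overall strategy as the paper: choose the rays $\gamma_1(t)=(\tanh t,0)$ and $\gamma_2(t)=(-\tanh t,0)$, show that $[\gamma_2]\notin V([\gamma_1],r)$ by proving the Gromov product between any representatives is zero, and conclude that $V([\gamma_1],r)$ is a nontrivial open set.

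The difference is in how the general representatives $\tilde\gamma_i\sim_r\gamma_i$ are handled. You argue that $|a_i(t)|=\tanh t$ for $t>C_i$, bound the angular deviation by $\theta_i(t)=O(e^{-2t})$, and then estimate $K_\D(a_1(t),a_2(s))=t+s-o(1)$ via an explicit hyperbolic computation. The paper instead invokes a structural fact about geodesic rays in the max-metric on $\D^2$: for any geodesic ray $(f(t),g(t))$ from the origin, at least one coordinate must itself be a $\D$-geodesic (the set $\{t:K_\D(0,f(t))=t\}$ is a closed initial interval, likewise for $g$, and their union is $[0,\infty)$, so one of them is all of $[0,\infty)$). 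Combined with the boundedness of the second coordinate, this forces the first coordinate of any $\tilde\gamma_i$ to be \emph{exactly} $(-1)^{i-1}\tanh t$, not merely asymptotically so. Then $K_\D(a_1(t),a_2(s))=t+s$ on the nose and the Gromov product is identically zero---no asymptotic estimate needed. Your quantitative angular-rigidity argument works, but becomes unnecessary once this exact rigidity of the first coordinate is observed.
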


\begin{proof} We choose the base point $(0,0)\in \D^2$.
Let $a:(-\infty,+\infty)\to \D$ be the geodesic for the hyperbolic distance of $\D$ whose image is the segment $(-1,1)$ and satisfying $\lim_{t \to - \infty}a(t) = -1,\ \lim_{t \to + \infty}a(t) = 1$.

Let $(1,0)_G$ be the point of $\partial_G^r \mathbb D^2$ represented by the geodesic ray $\alpha(t):=(a(t),0)$, $t\geq 0$, and let $(-1,0)_G$ be the point of $\partial_G^r \mathbb D^2$ represented by the geodesic ray $\beta(s):=(a(-s),0)$, $s\geq 0$.

Since $K_{\D^2}((z_1,z_2),(w_1,w_2))=\max_{j=1,2} \{K_\D(z_j,w_j)\}$, it follows that every geodesic rays $\gamma:[0,+\infty)\to \D^2$ such that $\gamma(0)=(0,0)$ is of the form $\gamma(t)=(f(t), g(t))$, where either $f$ or $g$ (or both) is a geodesic in $\D$. Hence, if $\gamma(t)=(f_1(t),f_2(t))$ is a geodesic ray in $\D^2$ equivalent to $(1,0)_G$, it follows that $f_1=a$ and $f_2((0,+\infty))$ is relatively compact in $\D$. Thus, every geodesic ray representing $(1,0)_G$ is of the form $(a(t), f(t))$ where  $\sup_{t\in [0,+\infty)}|f(t)|<1$ and $K_\D(0, f(t))\leq t$ for every $t \geq 0$. 
Similarly, if $\gamma$ is a geodesic ray equivalent to $(-1,0)_G$ it follows that $\gamma(s)=(a(-s), g(s))$, with  $\sup_{t\in [0,+\infty)}|g(t)|<1$ and $K_\D(0, g(t)) \leq t$ for every $t \geq 0$.

Now, let  $\gamma^+(t):=(a(t), f(t))$ be a geodesic ray representing $(1,0)_G$ and $\gamma^-(s):=(a(-s), g(s))$ be a geodesic ray representing $(-1,0)_G$. The Gromov product between $\gamma^+$ and $\gamma^-$ with respect to $(0,0)$ is:
\begin{equation*}
\begin{split}
2(\gamma^+(t), \gamma^-(s))_{(0,0)}&=2((a(t),f(t)), (a(-s), g(s))_{(0,0)}\\&=K_{\D^2}((a(t),f(t)), (0,0))+K_{\D^2}((a(-s),g(s)), (0,0))\\&-K_{\D^2}((a(t),f(t)), (a(-s),g(s)))\\&=t+s-\max\{K_\D(a(t),a(-s)), K_\D(f(t),g(s))\}
\\&=t+s-\max\{s+t, K_\D(f(t),g(s))\}\leq 0.
\end{split}
\end{equation*}
Hence, the Gromov product is always $0$.

Therefore, for any  couple of geodesic rays $\gamma^+$ representing $(1,0)_G$ and $\gamma^-$ representing $(-1,0)_G$ it holds
\[
\liminf_{t,s\to+\infty}(\gamma^+(t), \gamma^-(s))_{(0,0)}= 0.
\]
This implies that for every $r>0$ the point $(-1,0)_G$ does not belong to the open set $V((1,0)_G, r)$. Hence, for every $r>0$, the set $\partial_G^r \mathbb D^2\setminus V((1,0)_G,r)$ is a non empty closed set which does not contain $(1,0)_G$. In particular, the Gromov topology on $\partial_G^r \mathbb D^2$ is not trivial.
\end{proof}

Proposition~\ref{bidisc-prop} follows at once from Proposition \ref{nonhaus} and Proposition \ref{nonhaus-gro}.

\section{Boundary behavior}\label{App}

In this section we apply the results developed so far to study boundary behavior of univalent mappings defined on strongly pseudoconvex smooth domains. If $F:D \to \C^N$ is a holomorphic map and $p\in \partial D$, we denote by $\Gamma(F;p)$ the cluster set of $F$ at $p$, that is,
\[
\Gamma(F;p):=\{q\in \C^N: \exists \{w_n\}\subset D, F(w_n)\to q\}.
\]

We start by the following result:

\begin{theo}\label{convergenceI}
Let $D\subset \C^N$ be a  bounded strongly pseudoconvex domain with $C^3$ boundary. Let $F:D \to \Omega$ be a biholomorphism. Let $p\in \partial D$. Then there exists $\underline{x}\in \partial_H \Omega$ such that
\[
\Gamma(F;p)=\hbox{I}^H_\Omega(\underline{x}).
\] 
In particular, if there exists $q\in \partial \Omega$  such that $\hbox{I}^H_\Omega(\underline{x})=\{q\}$, then $\lim_{z\to p}F(z)=q$.
\end{theo}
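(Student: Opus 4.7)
The proof will be a short deduction combining Theorem \ref{main-pseudo} with Theorem \ref{extension}. The point is that on the $D$-side the horosphere topology coincides (via $\Theta$) with the Euclidean topology, so Euclidean cluster sets at $p$ are the same as horosphere cluster sets at the corresponding boundary class $\underline{x}_p\in\partial_H D$, and then transporting through the homeomorphism $\hat F$ converts these into horosphere impressions in $\Omega$.

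More precisely, I would first apply Theorem \ref{main-pseudo} to obtain the homeomorphism $\Theta:\hat D\to\overline{D}$ and set $\underline{x}_p:=\Theta^{-1}(p)\in\partial_H D$. Then I would apply Theorem \ref{extension} to the biholomorphism $F$ and define
\[
\underline{x}:=\hat F(\underline{x}_p)\in\partial_H\Omega.
\]
The two inclusions are then almost tautological:

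\emph{Inclusion $\Gamma(F;p)\subseteq \hbox{I}^H_\Omega(\underline{x})$.} Given $q\in\Gamma(F;p)$, pick $\{w_n\}\subset D$ with $w_n\to p$ in $\mathbb C^N$ and $F(w_n)\to q$ in $\mathbb C\mathbb P^N$. By Theorem \ref{main-pseudo}, $w_n\to \underline{x}_p$ in the horosphere topology of $\hat D$. Since $\hat F$ is a homeomorphism (Theorem \ref{extension}) and agrees with $F$ on $D$, the sequence $\{F(w_n)\}\subset\Omega$ converges to $\underline{x}=\hat F(\underline{x}_p)$ in the horosphere topology of $\hat\Omega$. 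Together with $F(w_n)\to q$ in $\mathbb C\mathbb P^N$, Definition \ref{impr-def} gives $q\in \hbox{I}^H_\Omega(\underline{x})$.

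\emph{Inclusion $\hbox{I}^H_\Omega(\underline{x})\subseteq \Gamma(F;p)$.} Given $q\in \hbox{I}^H_\Omega(\underline{x})$, pick $\{v_n\}\subset\Omega$ with $v_n\to\underline{x}$ in the horosphere topology and $v_n\to q$ in $\mathbb C\mathbb P^N$. Set $w_n:=F^{-1}(v_n)$. Applying Theorem \ref{extension} to $F^{-1}$, the sequence $\{w_n\}$ converges to $\widehat{F^{-1}}(\underline{x})=\underline{x}_p$ in the horosphere topology of $\hat D$. By Theorem \ref{main-pseudo} this is equivalent to $w_n\to p$ in the Euclidean topology, and then $F(w_n)=v_n\to q$ shows $q\in\Gamma(F;p)$.

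For the final sentence, if $\hbox{I}^H_\Omega(\underline{x})=\{q\}$ then $\Gamma(F;p)=\{q\}$, and since $\overline{\Omega}^{\mathbb C\mathbb P^N}$ is compact, every sequence $\{z_n\}\subset D$ with $z_n\to p$ has all cluster values of $\{F(z_n)\}$ equal to $q$, i.e.\ $\lim_{z\to p}F(z)=q$. I do not anticipate any real obstacle here: the entire content of the statement is encoded in the two homeomorphism theorems already proved, and the verification amounts to chasing definitions of convergence in the horosphere topology and of $\hbox{I}^H_\Omega$.
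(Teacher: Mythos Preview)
Your proof is correct and follows essentially the same approach as the paper: define $\underline{x}:=\hat F(\Theta^{-1}(p))$ using Theorem \ref{main-pseudo} and Theorem \ref{extension}, and then observe that Euclidean convergence $w_n\to p$ in $D$ is equivalent (via $\Theta$ and $\hat F$) to horosphere convergence $F(w_n)\to\underline{x}$ in $\hat\Omega$. Your write-up is simply more explicit than the paper's, spelling out both inclusions and the compactness argument for the final sentence, but there is no substantive difference in strategy.
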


\begin{proof}
By Theorem \ref{extension}, $F$ defines a homeomorphism $\hat{F}: \hat{D}\to \hat{\Omega}$. By Theorem \ref{main-pseudo}, there is a homeomorphism $\Theta: \hat{D}\to \overline{D}$ such that $\Theta(z)=z$ for $z\in D$. Therefore, a sequence $\{z_n\}\subset D$ converges to $p$ if and only if $\{\Theta^{-1}(z_n)\}$ converges to $\Theta^{-1}(p)\in \partial_H D$. Let $\underline{x}:=\hat{F}(\Theta^{-1}(p))$. Hence, the sequence $\{F(z_n)=\hat{F}(\Theta^{-1}(z_n))\}$ converges to $\underline{x}$ if and only if $\{z_n\}$ converges to $p$. From this the result follows.
\end{proof}

A first application is the following well known result (see, \cite{Fef}):

\begin{cor}\label{homeo-strongly}
Let $D\subset \C^N$ be a  bounded strongly pseudoconvex domain with $C^3$ boundary. Let $F:D \to \Omega$ be a biholomorphism. If $\Omega$ is a bounded strongly pseudoconvex domain with $C^3$ boundary then $F$ extends to a homeomorphism from $\overline{D}$ to $\overline{\Omega}$. 
\end{cor}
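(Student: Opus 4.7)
The plan is to assemble the corollary directly from three pieces already established in the paper: the intrinsic homeomorphic extension to the horosphere boundary (Theorem~\ref{extension}), and two instances of the identification between the horosphere compactification and the Euclidean closure for strongly pseudoconvex domains (Theorem~\ref{main-pseudo}).

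First I would invoke Theorem~\ref{extension}: since $F : D \to \Omega$ is a biholomorphism between complete Kobayashi hyperbolic manifolds, it induces a homeomorphism
\[
\hat{F} : \hat{D} \longrightarrow \hat{\Omega}
\]
of the horosphere compactifications, which agrees with $F$ on $D$. Next, since both $D$ and $\Omega$ are bounded strongly pseudoconvex domains with $C^3$ boundary, Theorem~\ref{main-pseudo} provides homeomorphisms $\Theta_D : \hat{D} \to \overline{D}$ and $\Theta_\Omega : \hat{\Omega} \to \overline{\Omega}$ (Euclidean closures), each of which restricts to the identity on the interior.

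The extension is then the composition
\[
\tilde{F} := \Theta_\Omega \circ \hat{F} \circ \Theta_D^{-1} : \overline{D} \longrightarrow \overline{\Omega},
\]
which is a homeomorphism as a composition of three homeomorphisms. For any $z\in D$ the three maps collapse to the identity, $\hat{F}$, and the identity respectively, so $\tilde{F}(z)=F(z)$; hence $\tilde{F}$ genuinely extends $F$.

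There is essentially no obstacle here: the substantive content is already absorbed into Theorem~\ref{main-pseudo} (which required the boundary regularity of both domains to identify the horosphere boundary with the Euclidean boundary via Proposition~\ref{stronglypseudo} and Proposition~\ref{convergenceadmsp}) and Theorem~\ref{extension} (which used only the isometric character of $F$ with respect to the Kobayashi distance). The only thing worth double-checking is that in Theorem~\ref{main-pseudo} the homeomorphism $\Theta$ restricts to the identity on the interior, which is built into its definition in the proof of that theorem; this is what guarantees that $\tilde{F}|_D = F$.
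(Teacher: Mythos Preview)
Your proof is correct and is essentially the same argument as the paper's, just more directly packaged: the paper invokes Theorem~\ref{convergenceI} and Proposition~\ref{strongly-pseudo-boundary} applied to both $F$ and $F^{-1}$, but the proofs of those two results rest precisely on Theorem~\ref{extension} and Theorem~\ref{main-pseudo}, so unwinding them yields exactly your composition $\Theta_\Omega \circ \hat{F} \circ \Theta_D^{-1}$. Your route simply bypasses the intermediate impression language, which is not needed here since both domains satisfy the hypotheses of Theorem~\ref{main-pseudo}.
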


\begin{proof}
It is enough to apply Theorem \ref{convergenceI} and Proposition \ref{strongly-pseudo-boundary} to both $F$ and $F^{-1}$. 
\end{proof}

Another application of our theory gives a positive answer to Conjecture 3.1.(a) in \cite{MS}, in fact, proving not only continuous extension, but extension as homeomorphism:

\begin{cor}\label{continuo-ball}
Let $D\subset \C^N$ be a bounded strongly convex domain with $C^3$ boundary. Let $F:D \to \Omega$ be a biholomorphism. If $\Omega$ is a bounded convex domain,  then $F$ extends as a homeomorphism  from $\overline{D}$ to $\overline{\Omega}$. 
\end{cor}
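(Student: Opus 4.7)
The plan is as follows. First, by Theorem~\ref{convergenceI} applied to the biholomorphism $F:D\to\Omega$ (with $D$ strongly convex, and in particular strongly pseudoconvex, with $C^3$ boundary), for every $p\in\partial D$ the cluster set $\Gamma(F;p)$ coincides with the horosphere impression $\hbox{I}^H_\Omega(\hat F(\Theta^{-1}(p)))$, where $\Theta:\hat D\to\overline D$ is the homeomorphism of Theorem~\ref{main-pseudo}. Since $\Omega$ is a bounded convex domain biholomorphic, via $F^{-1}$, to the bounded strongly convex $C^3$ domain $D$, Theorem~\ref{ball-biholo-convex} guarantees that each such impression is a single Euclidean boundary point $q_p\in\partial\Omega$. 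Setting $\tilde F(p):=q_p$ therefore produces a continuous extension $\tilde F:\overline D\to\overline\Omega$ of $F$.

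Next, $\tilde F(\overline D)$ is compact, hence closed in $\overline\Omega$, and contains the dense subset $F(D)=\Omega$, so $\tilde F$ is surjective. Because $F$ is bijective from $D$ onto $\Omega$ and $\tilde F$ sends $\partial D$ into $\partial\Omega$, it remains only to establish that $\tilde F|_{\partial D}$ is injective; granting this, $\tilde F$ is a continuous bijection between compact Hausdorff spaces, hence a homeomorphism.

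To prove injectivity I argue by contradiction, assuming $\tilde F(p)=\tilde F(p')=q$ for two distinct $p,p'\in\partial D$. Lempert's theory, applicable since $D$ is strongly convex with $C^3$ boundary, produces a complex geodesic $\varphi:\D\to D$ which extends continuously to $\overline\D$ with $\varphi(1)=p$ and $\varphi(-1)=p'$. The composition $F\circ\varphi:\D\to\Omega$ is a complex geodesic in $\Omega$ which, by continuity of $\tilde F$, extends continuously to $\overline\D$ and takes the common value $q$ at $\pm 1$. Parametrizing the real diameter by Kobayashi arclength yields a real geodesic $\eta:\R\to\Omega$ with $\eta(\pm\infty)=q$ in the Euclidean topology.

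Finally, I exploit Gromov hyperbolicity to reach the contradiction. By \cite[Theorem~1.4]{BB}, $(D,K_D)$ is Gromov hyperbolic, hence so is $(\Omega,K_\Omega)$ via the isometry $F$. A direct computation gives $(\eta(s),\eta(-t))_{\eta(0)}=\tfrac{1}{2}(s+t-(s+t))=0$ for all $s,t>0$, so the two ends of $\eta$ represent two distinct points of the Gromov boundary of $\Omega$. On the other hand, combining Lemma~\ref{no-disc} with Lemma~\ref{quasi-lem}, the concavity of the Euclidean distance-to-boundary function on convex sets, and the lower estimate $K_\Omega(x_0,z)\geq-\tfrac{1}{2}\log\delta(z)-C$, one verifies that two sequences in $\Omega$ converging to the same Euclidean boundary point have Gromov product tending to $+\infty$, whereas two sequences converging to distinct Euclidean boundary points have bounded Gromov product. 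Consequently the natural projection from the Gromov boundary of $\Omega$ to $\partial\Omega$ is a bijection, forcing the two ends of $\eta$---which share the Euclidean limit $q$---to represent the same Gromov boundary point, contradicting the vanishing of the Gromov product above. The main obstacle is precisely this identification of the Gromov boundary of $\Omega$ with $\partial\Omega$, which requires the convex analytic ingredients above together with the stability of quasi-geodesics in Gromov hyperbolic spaces.
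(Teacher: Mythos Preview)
Your continuous extension via Theorem~\ref{convergenceI} and Theorem~\ref{ball-biholo-convex} is the same as the paper's (the paper additionally spells out boundary-to-boundary continuity using Corollary~\ref{converge-bene}, but the singleton-cluster-set statement already yields this by a standard diagonal argument, so there is no gap here).

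For injectivity you take a genuinely different route. The paper's argument is elementary and purely horosphere-theoretic: if $\tilde F(p_0)=\tilde F(p_1)=q$ with $p_0\neq p_1$, choose $R>0$ so that $V:=E^D_x(\{u_n\},R)\cap E^D_x(\{v_n\},R)$ is nonempty (where $\{u_n\}\to p_0$, $\{v_n\}\to p_1$). Then $V$ is relatively compact in $D$, since the two horosphere closures touch $\partial D$ only at the distinct points $p_0,p_1$. But $F(V)$ is the intersection of two \emph{convex} horospheres in $\Omega$ (Proposition~\ref{horo-convex}) whose closures both contain $q$; hence the open segment from any point of $F(V)$ to $q$ lies in $F(V)$, contradicting relative compactness. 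This avoids Gromov theory and Lempert geodesics entirely.

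Your Gromov-boundary approach can be made to work, but the pivotal claim---that sequences in $\Omega$ with the same Euclidean limit $q\in\partial\Omega$ have Gromov product tending to $+\infty$---is not justified by the ingredients you list: Lemma~\ref{no-disc} and the concavity of $\delta$ play no role, and the second half of your claim about distinct Euclidean limits is superfluous for the contradiction. A correct argument runs as follows. By Lemma~\ref{quasi-lem} the Euclidean segments $[x_0,a_n]$ and $[x_0,b_n]$ are uniform $(A,B)$-quasi-geodesics once $a_n,b_n$ lie in the neighborhood $U$ of $q$; for any fixed $t^*\in(0,1)$ the points $(1-t^*)x_0+t^*a_n$ and $(1-t^*)x_0+t^*b_n$ both converge in $K_\Omega$ to the interior point $z^*:=(1-t^*)x_0+t^*q$; Gromov shadowing then forces the \emph{geodesics} from $x_0$ to $a_n$ and to $b_n$ each to pass within a uniform constant $M$ of $z^*$, and a triangle-inequality computation gives $(a_n,b_n)_{x_0}\geq K_\Omega(x_0,z^*)-2(M+1)$, which is arbitrarily large as $t^*\to 1$. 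So the approach is salvageable, but it repackages the same quasi-geodesic/shadowing machinery already absorbed into the proof of Theorem~\ref{ball-biholo-convex}, whereas the paper's horosphere argument for injectivity is a two-line application of convexity.
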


\begin{proof}
By  Theorem \ref{ball-biholo-convex} and Theorem \ref{convergenceI}, for every $p\in \partial D$ the limit $F(p):=\lim_{z\to p}F(z)$ exists. In order to see that the map $F:\overline{D}\to \overline{\Omega}$ is continuous, we have only to show that if $\{p_j\}\subset \partial D$ is a sequence converging to $p\in \partial D$, then $F(p_j)\to F(p)$.  Indeed, by Proposition \ref{strongly-pseudo-boundary}, there exist $\underline{x}_j, \underline{x}\in \partial_H D$ such that $\hbox{I}^H_{D}(\underline{x}_j)=\{p_j\}$, $\hbox{I}^H_{D}(\underline{x}_j)=\{p\}$ and $\{\underline{x}_j\}$ converges to $\underline{x}$ in the horosphere topology of $D$. Hence $\{\hat{F}(\underline{x}_j)\}$ converges to $\{\hat{F}(\underline{x})\}$ in the horosphere topology of $\Omega$. Moreover, $\hbox{II}^H_{\Omega}(\hat{F}(\underline{x}_j))=\hbox{I}^H_\Omega(\hat{F}(\underline{x}_j))=\{F(p_j)\}$ and similarly $\hbox{II}^H_{\Omega}(\hat{F}(\underline{x}))=\hbox{I}^H_{\Omega}(\hat{F}(\underline{x}))=\{
F(p)\}$. Hence, $F(p_j)\to F(p)$ by 
Corollary \ref{converge-bene}.

Therefore, $F:\overline{D}\to \overline{\Omega}$ is continuous. Since $\overline{D}$ is compact, in order to prove that $F$ is a homeomorphism, we only need to prove that it is injective. 

We argue by contradiction. Assume $p_0, p_1\in \partial D$ and $q:=F(p_0)=F(p_1)\in\partial \Omega$. Let $\{u_n\}\subset D$ be a sequence converging to $p_0$ and $\{v_n\}\subset D$ be a sequence converging to $p_1$. Then $\{u_n\}$ and  $\{v_n\}$ are admissible by  Proposition \ref{pointstrict} and not equivalent  by Proposition \ref{convergenceadmsp}. Let $x\in D$ and $R>0$  be such that $V:=E^{D}_x(\{u_n\}, R)\cap E^{D}_x(\{v_n\}, R)\neq \emptyset$. Note that $V$ is open and relatively compact in $D$ because $\overline{E^{D}_x(\{u_n\}, R)}\cap \partial D=\{p_0\}$ and $\overline{E^{D}_x(\{v_n\}, R)}\cap \partial D=\{p_1\}$. 

Since $F$ is a biholomorphism it maps horospheres onto horospheres. Moreover, $F(E^{D}_x(\{u_n\}, R))=E^\Omega_{F(x)}(\{F(u_n)\}, R)$ and $F(E^{D}_x(\{v_n\}, R))=E^\Omega_{F(x)}(\{F(v_n)\}, R)$ are convex by Proposition \ref{horo-convex}. Hence, 
\[
F(V)=F(E^{D}_x(\{u_n\}, R)\cap E^{D}_x(\{v_n\}, R))=E^{\Omega}_{F(x)}(\{F(u_n)\}, R)\cap E^{\Omega}_{F(x)}(\{F(v_n)\}, R)
\]
is open, convex and relatively compact in $\Omega$. 

Now, $q\in \overline{E^{\Omega}_{F(x)}(\{F(u_n)\}, R)}\cap \overline{E^{\Omega}_{F(x)}(\{F(v_n)\}, R)}$. Let $z_0\in F(V)$. Since the two horospheres are both open, convex, and $z_0$ is contained in both horospheres, the real segment $\gamma:=\{tz_0+(1-t)q: t\in (0,1]\}$ as well is contained in  both horospheres. That is, $\gamma\subset F(V)$. But then $F(V)$ is not relatively compact in $\Omega$, a contradiction.  
\end{proof}

With a similar argument, using Proposition \ref{strict-biholo-convex} instead of Theorem \ref{ball-biholo-convex}, we have

\begin{cor}\label{continuo-strict}
Let $D\subset \C^N$ be a  bounded strongly pseudoconvex domain with $C^3$ boundary. Let $F:D \to \Omega$ be a biholomorphism. If $\Omega$ is a strictly $\C$-linearly bounded convex domain,   then $F$ extends as a homeomorphism  from $\overline{D}$ to $\overline{\Omega}$.
\end{cor}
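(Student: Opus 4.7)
The plan is to follow the blueprint of the proof of Corollary~\ref{continuo-ball} verbatim, merely replacing the input Theorem~\ref{ball-biholo-convex} (which guarantees that horosphere impressions are singletons for convex domains biholomorphic to strongly convex ones) by Proposition~\ref{strict-biholo-convex} (which gives the same conclusion for strictly $\C$-linearly convex domains). Observe that all the other ingredients used in that proof remain available: $\Omega$ is convex, hence horospheres in $\Omega$ are convex by Proposition~\ref{horo-convex}; $\Omega$ is biholomorphic to a bounded strongly pseudoconvex domain with $C^3$ boundary via $F^{-1}$, so Corollary~\ref{converge-bene} applies to $\Omega$; and $D$ itself satisfies Proposition~\ref{strongly-pseudo-boundary}, giving the identification of $\partial_H D$ with $\partial D$.

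\medskip

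First, I would prove pointwise continuous extension at each $p\in\partial D$. By Theorem~\ref{extension}, $F$ extends to a homeomorphism $\hat F:\hat D\to\hat\Omega$. By Proposition~\ref{strongly-pseudo-boundary}, $p$ corresponds to a unique $\underline{x}_p\in\partial_H D$; set $\underline{y}:=\hat F(\underline{x}_p)\in\partial_H\Omega$. By Proposition~\ref{strict-biholo-convex}, $\hbox{I}^H_{\Omega}(\underline{y})=\hbox{II}^H_{\Omega}(\underline{y})=\{q\}$ for a unique $q\in\partial\Omega$. Theorem~\ref{convergenceI} then yields $\lim_{z\to p}F(z)=q$, so we may define $F(p):=q$.

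\medskip

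Next, I would upgrade this to global continuity on $\overline D$. Given a sequence $\{p_j\}\subset\partial D$ with $p_j\to p$ in the Euclidean topology, Proposition~\ref{strongly-pseudo-boundary} (and Theorem~\ref{main-pseudo}) guarantees that the associated points $\underline{x}_{p_j}\in\partial_H D$ converge to $\underline{x}_p$ in the horosphere topology. Applying the homeomorphism $\hat F$, $\hat F(\underline{x}_{p_j})\to\hat F(\underline{x}_p)$ in the horosphere topology of $\Omega$. Since horosphere impressions in $\Omega$ are singletons by Proposition~\ref{strict-biholo-convex}, Corollary~\ref{converge-bene} applies (its hypotheses are satisfied by $\Omega$) and forces $F(p_j)\to F(p)$ in the Euclidean topology. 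Combined with the trivial continuity at interior points, this shows $F:\overline D\to\overline\Omega$ is continuous, hence surjective (its image is closed and contains $\Omega$, which is dense in $\overline\Omega$).

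\medskip

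The main obstacle is injectivity on the boundary, where we cannot simply invert the argument (the boundary regularity of $\Omega$ is not assumed). I would argue exactly as in Corollary~\ref{continuo-ball}: suppose $p_0\neq p_1\in\partial D$ with $F(p_0)=F(p_1)=q\in\partial\Omega$. Pick sequences $\{u_n\}\to p_0$ and $\{v_n\}\to p_1$ in $D$; by Proposition~\ref{pointstrict} they are admissible, and by Proposition~\ref{convergenceadmsp} they are inequivalent. Choose $x\in D$ and $R>0$ so that $V:=E^{D}_x(\{u_n\},R)\cap E^{D}_x(\{v_n\},R)$ is nonempty; by Lemma~\ref{squeeze} (via Abate's big horospheres) $V$ is relatively compact in $D$. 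Since $F$ is an isometry for the Kobayashi distance, it carries horospheres to horospheres, so
\[
F(V)=E^{\Omega}_{F(x)}(\{F(u_n)\},R)\cap E^{\Omega}_{F(x)}(\{F(v_n)\},R),
\]
which is open in $\Omega$, convex by Proposition~\ref{horo-convex} (as $\Omega$ is convex), and relatively compact in $\Omega$ (being $F(V)$ with $V\Subset D$). On the other hand, $q$ lies in the closures of both horospheres in $\Omega$ (since $F(u_n)\to q$ and $F(v_n)\to q$ by the already proved continuous extension, and applying Lemma~\ref{whois}). Fixing any $z_0\in F(V)$, the real segment $\{tz_0+(1-t)q:t\in(0,1]\}$ lies in both convex horospheres, hence in $F(V)$, contradicting relative compactness of $F(V)$ in $\Omega$. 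Thus $F$ is injective, and being a continuous bijection from the compact space $\overline D$ onto the Hausdorff space $\overline\Omega$, it is a homeomorphism.
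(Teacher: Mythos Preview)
Your proof is correct and follows exactly the paper's approach: the paper simply states that Corollary~\ref{continuo-strict} is obtained ``with a similar argument, using Proposition~\ref{strict-biholo-convex} instead of Theorem~\ref{ball-biholo-convex}'', and you have carried this out in full detail, using precisely the same ingredients (Theorem~\ref{convergenceI}, Corollary~\ref{converge-bene}, Proposition~\ref{horo-convex}, and the convexity-based segment argument for injectivity) as in the proof of Corollary~\ref{continuo-ball}.
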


Now we consider non-tangential limits. In fact, in our theory, the right notion to consider is that of {\sl $E$-limits}. Let $D\subset \C^N$ be a bounded strongly pseudoconvex domain with $C^3$ boundary. If $p\in \partial D$, by Proposition \ref{strongly-pseudo-boundary}, there exists $\underline{x}_p\in \partial_H D$ such that $\hbox{I}^H_{D}(\underline{x}_j)=\{p\}$. Given a map $f:\D \to \C^N$, we denote by $\Gamma_E(f;p)$ the cluster set of $f$ at $p$ along sequences $E$-converging to $\underline{x}_p$ (see Definition \ref{E-lim}), namely,
\[
\Gamma_E(f;p)=\{q\in \C^N: \exists \{z_n\}\subset D : E-\lim_{n\to \infty} z_n=\underline{x}_p, f(z_n)\to q\}.  
\] 

A slight modification of the proof of Theorem \ref{convergenceI}, taking into account that $F$ maps sequences E-converging to $\underline{y}$ to sequences E-converging to $\hat{F}(\underline{y})$ and Lemma \ref{II-to-hor}, gives the following:

\begin{theo}\label{convergenceII}
Let $D\subset \C^N$ be a  bounded strongly pseudoconvex domain with $C^3$ boundary. Let $F:D \to \Omega$ be a biholomorphism, $x\in \Omega$. Let $p\in \partial D$. Then there exists $\underline{x}\in \partial_H \Omega$ such that
\[
\Gamma_E(f;p)=\hbox{II}^H_\Omega(\underline{x})=\bigcap_{R>0} \overline{E_x(\{u_n\}, R)}^{\mathbb C \mathbb P^N},
\]
where $\{u_n\}\subset D$ is any admissible sequence representing $\underline{x}$. 
\end{theo}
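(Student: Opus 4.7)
The plan is to imitate the proof of Theorem \ref{convergenceI}, replacing Euclidean convergence by $E$-convergence on both sides and using that biholomorphisms carry horospheres onto horospheres. The key observation is that the hyperbolicity of $D$ together with Proposition \ref{strongly-pseudo-boundary} gives a canonical choice of the point $\underline{x}\in\partial_H\Omega$ associated to $p\in\partial D$, while Theorem \ref{extension} ensures that this choice is compatible with $F$.

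First I would define $\underline{x}$. By Proposition \ref{strongly-pseudo-boundary} there is a unique $\underline{x}_p\in\partial_H D$ with $\hbox{I}^H_D(\underline{x}_p)=\{p\}$, so I set $\underline{x}:=\hat{F}(\underline{x}_p)\in\partial_H\Omega$, where $\hat F$ is the homeomorphism provided by Theorem \ref{extension}. Fix any admissible sequence $\{v_n\}\subset D$ representing $\underline{x}_p$ (for instance, the sequence guaranteed by Proposition \ref{convergenceadmsp}, which converges to $p$); then $\{u_n\}:=\{F(v_n)\}$ is admissible in $\Omega$ and represents $\underline{x}$, since $F$ is a $K$-isometry.

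Next I would establish the transfer of $E$-convergence across $F$. Because $F$ is a biholomorphism, for every $y\in D$ and every $R>0$
\[
F\bigl(E^D_y(\{v_n\},R)\bigr)=E^\Omega_{F(y)}(\{F(v_n)\},R)=E^\Omega_{F(y)}(\{u_n\},R).
\]
Hence a sequence $\{z_n\}\subset D$ is eventually contained in $E^D_y(\{v_n\},R)$ for every $R>0$ if and only if $\{F(z_n)\}$ is eventually contained in $E^\Omega_{F(y)}(\{u_n\},R)$ for every $R>0$. By Definition \ref{E-lim} this means $E\text{-}\lim z_n=\underline{x}_p$ iff $E\text{-}\lim F(z_n)=\underline{x}$.

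Finally, I would combine the two observations to conclude. A point $q\in\mathbb{CP}^N$ lies in $\Gamma_E(F;p)$ iff there exists $\{z_n\}\subset D$ with $E\text{-}\lim z_n=\underline{x}_p$ and $F(z_n)\to q$; by the equivalence just established, this is the same as the existence of $\{w_n\}\subset\Omega$ with $E\text{-}\lim w_n=\underline{x}$ and $w_n\to q$, which by Definition \ref{princpart-def} is precisely $q\in\hbox{II}^H_\Omega(\underline{x})$. The second equality in the statement is Lemma \ref{II-to-hor} applied in $\Omega$ with the admissible sequence $\{u_n\}=\{F(v_n)\}$; note that by Lemma \ref{changebasept} the base point $x$ can be chosen arbitrarily up to harmless dilation of the radii, so the intersection is independent of it. No serious obstacle is expected here: the proof is a straightforward functorial translation, and the only point to check carefully is the equality $F(E^D_y(\{v_n\},R))=E^\Omega_{F(y)}(\{u_n\},R)$, which is immediate from $K_\Omega(F(\cdot),F(\cdot))=K_D(\cdot,\cdot)$ inside the defining limsup.
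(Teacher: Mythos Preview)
Your proposal is correct and follows essentially the same approach as the paper, which simply notes that the proof is a slight modification of Theorem \ref{convergenceI} using that $F$ carries $E$-converging sequences to $E$-converging sequences together with Lemma \ref{II-to-hor}. You have spelled out precisely those ingredients, including the key isometry identity $F(E^D_y(\{v_n\},R))=E^\Omega_{F(y)}(\{u_n\},R)$ and the surjectivity of $F$ needed for the reverse inclusion.
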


Finally, let $D\subset \C^N$ be a bounded strongly pseudoconvex domain with $C^3$ boundary and $p\in \partial D$. If $f:D \to \C^N$ is a map, we denote by $\Gamma_{NT}(f;p)$ the cluster set of $f$ along sequences converging to $p$ non-tangentially.  By  Proposition \ref{pointstrict}, 
\[
\Gamma_{NT}(f;p)\subseteq \Gamma_E(f;p)\subseteq \Gamma(f;p).
\]

In particular, by Theorem \ref{convergenceII}ÃÂ  we have:

\begin{cor}\label{strong-ext}
Let $D\subset \C^N$ be a  bounded strongly pseudoconvex domain with $C^3$ boundary. Let $F:D \to \Omega$ be a biholomorphism and assume that for every $\underline{x}\in \partial_H \Omega$ the horosphere principal part $\hbox{II}^H_\Omega(\underline{x})$ consists of one point. Then for every $p\in \partial D$ the non-tangential limit $\angle\lim_{z\to p}F(z)$ exists.
\end{cor}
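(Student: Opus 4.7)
The plan is to compose Theorem \ref{convergenceII} with the inclusion $\Gamma_{NT}(F;p) \subseteq \Gamma_E(F;p)$ noted just before this corollary, using the single-point hypothesis on principal parts to collapse both cluster sets to a single value.

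Fix $p \in \partial D$. By Proposition \ref{strongly-pseudo-boundary} there is a unique $\underline{x}_p \in \partial_H D$ with $\hbox{I}^H_D(\underline{x}_p) = \{p\}$; moreover, by Proposition \ref{pointstrict} every sequence in $D$ converging to $p$ is admissible, and by Proposition \ref{convergenceadmsp} all such sequences represent $\underline{x}_p$. Set $\underline{y} := \hat{F}(\underline{x}_p) \in \partial_H \Omega$. Theorem \ref{convergenceII}, applied with $\underline{x} = \underline{y}$, then yields $\Gamma_E(F;p) = \hbox{II}^H_\Omega(\underline{y})$, which by hypothesis is a single point $\{q\} \subset \overline{\Omega}^{\mathbb{C}\mathbb{P}^N}$.

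The substantive step is to check that every non-tangentially convergent sequence $\{z_n\} \subset D$ with $z_n \to p$ satisfies $E\text{-}\lim_n z_n = \underline{x}_p$. Pick $\alpha > 1$ and $\varepsilon_0 > 0$ so that $z_n \in \mathcal{C}(p,\alpha,\varepsilon_0)$ eventually, and fix any sequence $\{u_n\} \subset D$ with $u_n \to p$, which represents $\underline{x}_p$. For each $R > 0$, Proposition \ref{pointstrict} produces $\varepsilon \in (0,\varepsilon_0]$, depending only on $\alpha$ and $R$ (and not on $\{u_n\}$), such that $\mathcal{C}(p,\alpha,\varepsilon) \subset E_x(\{u_n\}, R)$. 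Since $z_n \to p$, eventually $|z_n - p| < \varepsilon$, so $z_n \in \mathcal{C}(p,\alpha,\varepsilon) \subset E_x(\{u_n\}, R)$. As $R > 0$ was arbitrary, this is exactly $E$-convergence to $\underline{x}_p$, whence $\Gamma_{NT}(F;p) \subseteq \Gamma_E(F;p) = \{q\}$.

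To conclude, every non-tangential cluster value of $F$ at $p$ equals $q$; since $\overline{\Omega}^{\mathbb{C}\mathbb{P}^N}$ is compact, this forces $F(z_n) \to q$ for every non-tangential sequence $z_n \to p$, and hence $\angle\lim_{z\to p} F(z) = q$. No essential obstacle is expected: Theorem \ref{convergenceII} and the cone-region bound of Proposition \ref{pointstrict} have absorbed all of the work, and the corollary is essentially a bookkeeping exercise unpacking the definition of $E$-convergence under the hypothesis that every horosphere principal part in $\Omega$ is a singleton.
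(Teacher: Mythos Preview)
Your argument is correct and follows exactly the route the paper intends: the corollary is deduced from Theorem \ref{convergenceII} together with the inclusion $\Gamma_{NT}(F;p)\subseteq \Gamma_E(F;p)$, which in turn rests on the cone-region containment of Proposition \ref{pointstrict}. You have simply unpacked in detail the step the paper records in one line just before the statement.
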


As a spin off result of our work, we prove the following Wolff-Denjoy theorem, which gives a (partial) positive answer to a conjecture in \cite{AR} (see \cite[Rmk. 3.3]{AR}). Let $D\subset \C^N$ be a bounded domain. Let $f:D\to D$ be holomorphic. A point $q\in \overline{D}$ belongs to the {\sl target set} $T(f)$ of $f$ if there exist a sequence $\{k_m\}\subset \N$ converging to $\infty$ and $z\in D$ such that $\lim_{m\to \infty}f^{k_m}(z)=q$. 

\begin{prop}\label{Denjoy}
Let $D\subset \C^N$ be a bounded convex domain. Assume that  either $D$ is  biholomorphic to a strongly convex domain with $C^3$ boundary or $D$ is $\C$-strictly linearly convex and biholomorphic to a bounded strongly pseudoconvex domain with $C^3$ boundary. Let $f: D \to D$ be holomorphic without fixed points in $D$. Then there exists  exists $p\in \partial D$ such that $T(f)=\{p\}$.
\end{prop}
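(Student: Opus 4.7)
The strategy is to conjugate to the classical Denjoy--Wolff setting through the biholomorphism with the model domain, then transfer convergence back to $D$ via the horosphere topology.

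Let $F:\Omega\to D$ be a biholomorphism, where $\Omega$ is either a bounded strongly convex domain with $C^3$ boundary (first case) or a bounded strongly pseudoconvex domain with $C^3$ boundary (second case). The map $g:=F^{-1}\circ f\circ F:\Omega\to\Omega$ is a fixed-point-free holomorphic self-map. First I would invoke the classical Denjoy--Wolff theorem for $g$: in the strongly convex case this is due to Abate, and in the strongly pseudoconvex case it is due to Huang--Abate. In both cases one obtains a point $\eta\in\partial\Omega$ such that, for every $w\in\Omega$, the iterates $g^n(w)$ converge to $\eta$ in the Euclidean topology.

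Fix $z\in D$ and set $w_n:=f^n(z)=F(g^n(F^{-1}(z)))$. By Theorem \ref{main-pseudo}, Euclidean convergence $g^n(F^{-1}(z))\to\eta$ implies convergence in the horosphere topology of $\hat{\Omega}$ to the unique $\underline{y}\in\partial_H\Omega$ corresponding to $\eta$ under the homeomorphism $\Theta$. By Theorem \ref{extension}, the induced map $\hat{F}:\hat{\Omega}\to\hat{D}$ is a homeomorphism, so $\{w_n\}$ converges to $\underline{x}:=\hat{F}(\underline{y})\in\partial_H D$ in the horosphere topology of $\hat{D}$. Applying Theorem \ref{ball-biholo-convex} in the first case and Proposition \ref{strict-biholo-convex} in the second, there is a unique $p\in\partial D$ (possibly depending on $z$) such that $\hbox{I}^H_D(\underline{x})=\{p\}$. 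Since $\{w_n\}$ is bounded in $\C^N$, every Euclidean subsequential limit lies in $\hbox{I}^H_D(\underline{x})$ by Definition \ref{impr-def}, and hence $w_n\to p$ in the Euclidean topology.

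It remains to verify that $p$ is independent of $z$ and that $T(f)=\{p\}$. Fix $z_0\in D$ and let $p$ be the boundary point obtained from $z_0$ as above. For any $z\in D$, Kobayashi contraction gives $K_D(f^n(z),f^n(z_0))\le K_D(z,z_0)$ for all $n$. If some subsequence $f^{k_m}(z)$ converged in the Euclidean topology to $q\in\overline{D}$, then $q\in\partial D$ (since $f^{k_m}(z_0)\to p\in\partial D$ and the two sequences stay at bounded Kobayashi distance, $\{f^{k_m}(z)\}$ is also compactly divergent); and if $q\neq p$, Lemma \ref{no-disc} would force $K_D(f^{k_m}(z),f^{k_m}(z_0))\to\infty$, contradicting the contraction bound. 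So every subsequential Euclidean limit of every orbit equals $p$, yielding $T(f)=\{p\}$.

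The main obstacle is the first step: locating a version of the classical Denjoy--Wolff theorem on $\Omega$ that covers both sub-cases uniformly, and in particular guarantees Euclidean convergence (not merely compact divergence) of the iterates of $g$. Once this is in hand, the remainder is essentially a formal translation through Theorems \ref{extension}, \ref{main-pseudo}, \ref{ball-biholo-convex} and Proposition \ref{strict-biholo-convex}, combined with the elementary geometric Lemma \ref{no-disc}.
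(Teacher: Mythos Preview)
Your argument is correct, but the paper's proof is much shorter and follows a genuinely different path. The paper does not conjugate to the model domain at all: it stays in $D$ and invokes \cite[Lemma 3.10]{AR} to produce a Busemann admissible sequence $\{u_n\}\subset D$ whose horospheres are $f$-invariant, i.e.\ $f(E_x(\{u_n\},R))\subset E_x(\{u_n\},R)$ for all $R>0$. Since $\{f^k\}$ is compactly divergent and every $z\in D$ lies in some $E_x(\{u_n\},R_z)$, every cluster point of any orbit lies in $\overline{E_x(\{u_n\},R_z)}\cap\partial D=\hbox{II}(\{u_n\},R_z)$, which by Proposition~\ref{P:I-and-II} equals $\hbox{II}^H_D([\{u_n\}])$ independently of $R_z$; Theorem~\ref{ball-biholo-convex} or Proposition~\ref{strict-biholo-convex} then gives $\hbox{II}^H_D([\{u_n\}])=\{p\}$, and the proof is complete in two lines.

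Your route instead imports the full classical Denjoy--Wolff theorem on $\Omega$ as a black box, then uses the horosphere-topology homeomorphisms (Theorems~\ref{extension} and \ref{main-pseudo}) to transport convergence back to $D$. This works, and it is a nice illustration of the functoriality of the horosphere boundary, but it is less self-contained: you are assuming exactly the kind of result the proposition is meant to generalize. Two minor remarks: your independence step via Lemma~\ref{no-disc} is unnecessary, since the Denjoy--Wolff point $\eta\in\partial\Omega$ is already independent of the starting point, so $\underline{x}=\hat F(\underline{y})$ and hence $p$ do not depend on $z$; and the ``main obstacle'' you flag is not really an obstacle, since the Denjoy--Wolff theorem on bounded strongly pseudoconvex domains with $C^2$ boundary is established in \cite{ABZ}.
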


\begin{proof} 
Since $f$ has no fixed point in $D$ then $\{f^k\}$ is compactly divergent (see \cite{ABZ, A2}). By \cite[Lemma 3.10]{AR} there exists a Busemann admissible sequence $\{u_n\}$ such that for every $R>0$, 
\begin{equation}\label{dw-lemma}
f(E_x(\{u_n\}, R))\subset E_x(\{u_n\}, R).
\end{equation}
Hence the result follows from either Theorem \ref{ball-biholo-convex} or Proposition \ref{strict-biholo-convex}. 
\end{proof}

\begin{rem}
As we already pointed out, there exist convex domains biholomorphic to the unit ball which are not $\C$-strictly linearly convex, but  for which the Denjoy-Wolff theorem holds by Proposition \ref{Denjoy}. We conjecture that in fact the result in Proposition \ref{Denjoy} holds for every bounded convex domain $D\subset\C^N$  whose boundary does not contain non-constant analytic discs.
\end{rem}

\end{document}